\setlist[enumerate]{itemsep=0mm,parsep=2mm}
\newcommand\myshade{85}
\colorlet{myurlcolor}{Aquamarine}
\crefname{equation}{}{}
\def\normaledge{1.2}
\definecolor{edgeblack}{rgb}{0.25,0.25,0.25}
\definecolor{vertexblack}{rgb}{0.2,0.2,0.2}
\theoremstyle{definition}
\newtheorem{theorem}{Theorem}[section]
\newtheorem{lemma}[theorem]{Lemma}
\newtheorem*{lemma*}{Lemma}
\newtheorem*{conjecture*}{Conjecture}
\newtheorem*{lemma''*}{``Lemma''}
\newtheorem{claim}[theorem]{Claim}
\newtheorem*{claim*}{Claim}
\newtheorem{corollary}[theorem]{Corollary}
\newtheorem{conjecture}[theorem]{Conjecture}
\DeclareMathOperator{\rk}{rk}
\DeclareMathOperator{\supp}{supp}
\DeclareMathOperator{\grn}{grn*}
\newcommand{\RR}{\mathbb{R}}
\newcommand{\QQ}{\mathbb{Q}}
\newcommand{\norm}[1]{\left\lVert#1\right\rVert}
\newcommand{\twosum}{\oplus_2}
\newcommand{\R}{\mathbb{R}}
\newcommand{\Rd}{\mathcal{R}_d}
\Crefname{conjecture}{Conjecture}{Conjectures}
\begin{document}

\title{\bf Minimally globally rigid graphs}

\author{D\'aniel Garamv\"olgyi\thanks{Department of Operations Research, ELTE E\"otv\"os Lor\'and University, and the ELKH-ELTE Egerv\'ary Research Group
on Combinatorial Optimization, E\"otv\"os Lor\'and Research Network (ELKH),
P\'azm\'any P\'eter s\'et\'any 1/C, 1117 Budapest, Hungary.
e-mail: {\tt daniel.garamvolgyi@ttk.elte.hu }}\and
Tibor Jord\'an\thanks{Department of Operations Research, ELTE E\"otv\"os Lor\'and University, and the ELKH-ELTE Egerv\'ary Research Group
on Combinatorial Optimization, E\"otv\"os Lor\'and Research Network (ELKH),
P\'azm\'any P\'eter s\'et\'any 1/C, 1117 Budapest, Hungary.
e-mail: {\tt tibor.jordan@ttk.elte.hu}}
}

\date{September 6, 2022}

\maketitle

\begin{abstract}
A graph $G = (V,E)$ is globally rigid in $\R^d$ if for any generic placement $p : V \rightarrow \R^d$ of the vertices, the edge lengths $\norm{p(u) - p(v)}, uv \in E$ uniquely determine $p$, up to congruence.
In this paper we consider minimally globally rigid graphs, in which the deletion of an arbitrary edge destroys
global rigidity. We prove that if $G=(V,E)$ is minimally globally rigid in $\R^d$ on at least $d+2$ vertices, then $|E|\leq (d+1)|V|-\binom{d+2}{2}$. This implies that the minimum degree of $G$ is at most $2d+1$. We also show that the only graph in which the upper bound on the number of edges is attained is the complete graph $K_{d+2}$. It follows that every minimally globally rigid graph in $\R^d$ on at least $d+3$ vertices is flexible in $\R^{d+1}$.

As a counterpart to our main result on the sparsity of minimally globally rigid graphs, we show that in two dimensions, dense graphs always contain nontrivial globally rigid subgraphs.
More precisely, if some graph $G=(V,E)$ satisfies $|E|\geq 5|V|$, then $G$ contains a subgraph on at least seven vertices that is globally rigid in $\R^2$. If the well-known ``sufficient connectivity conjecture'' is true, then our methods also extend to higher dimensions. 

Finally, we discuss a conjectured strengthening of our main result, which states that if a pair of vertices $\{u,v\}$ is linked in $G$ in $\RR^{d+1}$, then $\{u,v\}$ is globally linked in $G$ in $\RR^d$. We prove this conjecture in the $d=1,2$ cases, along with a variety of related results.

\end{abstract}

\section{Introduction}

In this paper we investigate minimally globally rigid graphs. Our main result is an affirmative answer to the following conjecture made by the second author in \cite{handbook} (see also \cite[Conjecture 9]{jordan_extremal}).

\begin{conjecture}\cite[Conjecture 63.2.22]{handbook}\label{conjecture:minimallygloballyrigid}
Let $d$ be a positive integer and let $G = (V,E)$ be a graph on at least $d+2$ vertices that is minimally globally rigid in $\RR^d$. Then
\begin{enumerate}[label=\textit{(\alph*)}]
    \item $|E| \leq (d+1)|V| - \binom{d+2}{2}$ and
    \item the minimum degree of $G$ is at most $2d+1$.
\end{enumerate}
\end{conjecture}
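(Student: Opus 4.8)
The plan is to prove the edge bound (a) by exploiting the theory of global rigidity via stress matrices, and to derive the minimum degree bound (b) as a corollary. Let me think about the key structural tools.

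The central fact I'd rely on is Connelly's sufficiency theorem together with Gortler-Healy-Thurston's necessity result: a generic framework is globally rigid in $\R^d$ if and only if it is either a simplex or admits an equilibrium stress matrix of maximal rank $|V| - d - 1$. A stress is an assignment $\omega: E \to \R$ such that for every vertex $v$, $\sum_{u: uv \in E} \omega_{uv}(p(u) - p(v)) = 0$. The stress matrix $\Omega$ has the stresses as off-diagonal entries (with appropriate sign) and row sums zero; its kernel always contains the affine image of the placement, so the rank is at most $|V| - d - 1$, and global rigidity requires this to be attained.

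For the edge count, the natural approach is to show that minimal global rigidity forces the underlying graph to be sparse. Let me think about what minimality gives us. If $G$ is globally rigid but $G - e$ is not for every edge $e$, then each edge $e$ must be "essential." The standard way to detect redundancy is through stresses: an edge $e$ lies in the support of some equilibrium stress iff it is redundantly rigid in a suitable sense. Here the plan would be to count dimensions. Consider the space of equilibrium stresses, which has dimension $|E| - d|V| + \binom{d+1}{2}$ when $G$ is rigid (since the rigidity matrix has rank $d|V| - \binom{d+1}{2}$). For global rigidity we need a stress of rank $|V| - d - 1$, and by minimality no edge can be dropped while preserving a high-rank stress.

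So the key step I would pursue is: \emph{every edge must be contained in the support of every maximal-rank stress matrix}, or more precisely, deleting any edge must drop the maximal achievable stress-matrix rank below $|V|-d-1$. I would fix a generic placement $p$ and a maximal-rank stress $\omega$. For an edge $e = uv$, minimality of $G$ means $G - e$ is not globally rigid, so $G-e$ has no maximal-rank stress. The aim is to translate this into a constraint forcing many edges to be incident to low-degree structure. Concretely, I expect to argue that the stress matrix of the full graph, restricted appropriately, behaves like a matroid: the supports of stresses form the circuits of the "generic rigidity matroid" or a related cofactor/stress matroid. Then minimality says $E$ is exactly a union of circuits with no redundancy, and a counting argument on the rank function of this matroid yields $|E| \le (d+1)|V| - \binom{d+2}{2}$.

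The hard part will be making the stress-rank condition interact correctly with edge deletion — global rigidity is not a matroidal property in an obvious way, since dropping an edge can drop the stress rank by more than one, and the maximal-rank stress is not unique. I anticipate the crux is a local argument: at a vertex $v$ of low degree, one examines the stress matrix row and shows that if $\deg(v) \le 2d+1$ fails globally then some edge is redundant, contradicting minimality. For part (b), once (a) is established, summing degrees gives $\sum_v \deg(v) = 2|E| \le 2(d+1)|V| - (d+2)(d+1)$, so the average degree is strictly less than $2(d+1) = 2d+2$, forcing some vertex to have degree at most $2d+1$; this is the easy deduction. I would therefore concentrate effort on (a), and specifically on showing that the maximal stress rank drops upon deleting \emph{any} edge, which is where the genericity of $p$ and the full strength of the Gortler-Healy-Thurston characterization must be brought to bear.
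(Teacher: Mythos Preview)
Your setup is right: the stress-matrix characterization of global rigidity is exactly the engine, and your deduction of \textit{(b)} from \textit{(a)} via the degree sum is the same as the paper's. But your plan for \textit{(a)} has a genuine gap. The matroidal route you sketch (``supports of stresses form circuits, minimality means $E$ is a union of circuits with no redundancy, count ranks'') does not go through: the stress matroid in question is just the rigidity matroid $\mathcal{R}_d$, and its rank is $d|V|-\binom{d+1}{2}$, not $(d+1)|V|-\binom{d+2}{2}$; minimality for \emph{global} rigidity is not the same as being a union of $\mathcal{R}_d$-circuits without redundancy, and no matroid rank count on $\mathcal{R}_d$ alone will produce the $(d+1)|V|$ bound. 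The ``local argument at a low-degree vertex'' you propose as an alternative is likewise not how the argument goes, and you give no indication of how it would close.

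What the paper actually does is supply the missing idea you flag as ``the hard part''. Fix a minimally rigid spanning subgraph $G_0$, so the stress space is spanned by the $k = |E|-|E_0|$ fundamental stress matrices $\Omega_1,\ldots,\Omega_k$. If $|E| > (d+1)|V|-\binom{d+2}{2}$ then $k > r := |V|-d-1$. The crux is a purely linear-algebraic lemma: if some linear combination $\sum t_i \Omega_i$ has rank at least $r$ and $k>r$, then already some $r$-element subcollection has a combination of rank at least $r$. Proof: pass to an $r\times r$ nonsingular submatrix and note that $\det\bigl(\sum x_i \Omega_i'\bigr)$ is a nonzero polynomial of degree at most $r$ in $x_1,\ldots,x_k$; any monomial involves at most $r$ variables, and setting the others to zero still leaves a nonzero polynomial. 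Applying this, one obtains a proper spanning subgraph (namely $G_0$ plus the $r$ surviving edges) that still carries a stress matrix of rank $|V|-d-1$, hence is globally rigid, contradicting minimality. Your proposal circles this point but never lands on the determinant-degree argument, which is the whole content of the proof.
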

\noindent 
Note that part \textit{(b)} follows from part \textit{(a)} by a simple counting argument. We also show that equality in part \textit{(a)} can only hold when $G$ is the complete graph on $d+2$ vertices. The motivation for studying minimally globally rigid graphs is twofold. First, studying the minimal elements of a graph family, and in particular giving bounds on the minimum degree in such graphs is often a key step in finding a constructive characterization for the given graph family. Second, the bound on the number of edges in the above conjecture hints at a possible connection between global rigidity in $\RR^d$ and rigidity in $\RR^{d+1}$ which we shall discuss below.

We briefly introduce the basic notions of combinatorial rigidity theory. The rest of the definitions, as well as references, are given in the next section. Let $d$ be a positive integer. A \emph{(bar-and-joint) framework} in $\RR^d$ is a pair $(G,p)$, where $G = (V,E)$ is a graph and $p : V \rightarrow \RR^d$ is a function that maps the vertices of $G$ into Euclidean space. We also say that $(G,p)$ is a \emph{realization} of $G$ in $\RR^d$. Two realizations $(G,p)$ and $(G,q)$ are \emph{equivalent} if the edge lengths coincide in the two frameworks, that is, if $\norm{p(u) - p(v)} = \norm{q(u) - q(v)}$ for every edge $uv \in E$. The realizations are \emph{congruent} if $\norm{p(u) - p(v)} = \norm{q(u) - q(v)}$ holds for every pair of vertices $u,v \in V$. A framework $(G,p)$ in $\RR^d$ is \emph{globally rigid} if every equivalent framework $(G,q)$ in $\RR^d$ is congruent to $(G,p)$. As a local counterpart, we define $(G,p)$ to be \emph{rigid} if there is some $\varepsilon > 0$ such that every equivalent framework $(G,q)$ in $\RR^d$ such that $\norm{p(v) - q(v)} < \varepsilon$ for all $v \in V$ is congruent to $(G,p)$. Informally, a framework is rigid if it is ``globally rigid in a small neighbourhood.''

A framework $(G,p)$ is \emph{generic} if the set of coordinates of $p(v),v \in V$ is algebraically independent over $\QQ$. It is known that for a given dimension $d \geq 1$, the rigidity and global rigidity of generic realizations of $G$ in $\RR^d$ is determined by $G$ itself, in the sense that if there is some generic realization of $G$ in $\RR^d$ that is rigid (resp.\ globally rigid), then every generic realization in $\RR^d$ is rigid (resp.\ globally rigid). 
We say that $G$ is \emph{rigid} (resp.\ \emph{globally rigid}) in $\RR^d$ if generic realizations of $G$ in $\RR^d$ are rigid (resp.\ globally rigid). When the dimension is clear from the context, we shall simply say that $G$ is rigid (globally rigid, respectively). If $G$ is not rigid in $\RR^d$, then we say that it is \emph{flexible} in $\RR^d$. 

In the following, we give an outline of our results. In \cref{section:minimallygloballyrigid}, we consider \emph{minimally globally rigid graphs}: graphs that are globally rigid in $\RR^d$, but for which deleting any edge results in a graph that is not globally rigid in $\RR^d$. In particular, we prove part \textit{(a)} of the above conjecture (\cref{theorem:minimallygloballyrigid}).
The proof is based on a well-known ``linear algebraic'' characterization of globally rigid graphs and a purely algebraic result about the rank of a linear combination of matrices (\cref{lemma:sumrank}). We also strengthen \cref{theorem:minimallygloballyrigid} by showing that the only graph for which equality holds in the above upper bound is the complete graph on $d+2$ vertices (\cref{theorem:sharp}). As is often the case, determining the tight examples requires a much more involved analysis than the upper bound itself. Our proof, while still linear algebraic in nature, relies on recent deep results about globally rigid graphs. 
We note that an infinite family of almost tight examples is given by the complete bipartite graphs $K_{d+1,n-d-1}$ for $n \geq \binom{d+2}{2}+1$.

One of the consequences of \cref{theorem:minimallygloballyrigid,theorem:sharp} is that if a graph on at least $d+3$ vertices is minimally globally rigid in $\RR^d$, then it is flexible in $\RR^{d+1}$. On the other hand, it is known that if a graph is rigid in $\RR^{d+1}$, then it is globally rigid in $\RR^d$. These results hint at an intriguing interplay between $d$-dimensional global rigidity and $(d+1)$-dimensional rigidity. In \cref{section:globallylinked} we propose a conjecture that would partly explain this phenomenon (\cref{conjecture:globallylinked}). Informally, it says that if adding an edge between a pair of vertices $\{u,v\}$ of a graph $G$ does not change the generic rigidity properties of $G$ in $\RR^{d+1}$ (that is, $\{u,v\}$ is \emph{linked} in $G$ in $\RR^{d+1}$), then adding the same edge does not change the generic global rigidity properties of $G$ in $\RR^{d}$ (that is, $\{u,v\}$ is \emph{globally linked} in $G$ in $\RR^d$). We prove this conjecture in the $d \leq 2$ case, as well as a variety of related results. 

\cref{theorem:minimallygloballyrigid} shows that minimally globally rigids have, in a sense, few edges. In \cref{section:globallyrigidsubgraphs} we discuss a similar question: do graphs with many edges necessarily have nontrivial globally rigid subgraphs? Here by nontrivial we mean that the subgraph has at least $d+2$ vertices. The existence of such subgraphs was also recently considered in \cite{MLT}, where it was linked to the so-called maximum likelihood threshold of the graph. We concentrate on the $d=2$ case and show that a graph $G = (V,E)$ on at least seven vertices with $|E| \geq 5|V| - 14$ always has a subgraph on at least seven vertices that is (redundantly) globally rigid in $\RR^2$.

We conclude with \cref{section:concluding}, where we put \cref{theorem:minimallygloballyrigid} into context by recalling related results about minimally $(d+1)$-connected graphs and minimally vertex-redundantly rigid graphs in $\RR^d$. It turns out that in each of these cases the tight upper bound on the number of edges in a minimal graph is approximately $(d+1)|V|$. We believe that a similar upper bound holds for the number of edges in a minimally redundantly rigid graph in $\RR^d$.

\section{Preliminaries}\label{section:preliminaries}

Throughout this section, let $d$ be a fixed positive integer.

\subsection{Stresses and the rigidity matroid}

The rigidity matroid of a graph $G$ is a matroid defined on the edge set
of $G$ which reflects the rigidity properties of all generic realizations of
$G$.
For a general introduction to matroid theory we refer the reader to \cite{oxley}. For a detailed treatment of the $2$-dimensional rigidity matroid, see \cite{Jmemoirs}.

Let $(G,p)$ be a realization of a graph $G=(V,E)$ in $\RR^d$.
The \emph{ rigidity matrix} of the framework $(G,p)$
is the matrix $R(G,p)$ of size
$|E|\times d|V|$, where, for each edge $uv\in E$, in the row
corresponding to $uv$,
the entries in the $d$ columns corresponding to vertices $u$ and $v$ contain
the $d$ coordinates of
$(p(u)-p(v))$ and $(p(v)-p(u))$, respectively,
and the remaining entries
are zeros. 
The rigidity matrix of $(G,p)$ defines
the \emph{rigidity matroid}  of $(G,p)$ on the ground set $E$
by linear independence of rows of the
rigidity matrix. It is known that any pair of generic frameworks
$(G,p)$ and $(G,q)$ have the same rigidity matroid.
We call this the $d$-dimensional \emph{rigidity matroid}
${\cal R}_d(G)=(E,r_d)$ of $G$.

We denote the rank of ${\cal R}_d(G)$ by $r_d(G)$.
A graph $G=(V,E)$ is \emph{$\Rd$-independent} if $r_d(G)=|E|$ and it is an \emph{$\Rd$-circuit} if it is not $\Rd$-independent but every proper 
subgraph $G'$ of $G$ is $\Rd$-independent. We note that in the literature such graphs are sometimes called $M$-independent in $\RR^d$ and $M$-circuits in $\RR^d$, respectively. 
An edge $e$ of $G$ is an \emph{$\Rd$-bridge in $G$}
if  $r_d(G-e)=r_d(G)-1$ holds. Equivalently, $e$ is an $\Rd$-bridge in $G$ if it is not contained in any subgraph of $G$ that is an $\Rd$-circuit.

The following characterization of rigid graphs is due to Gluck.

\begin{theorem}\label{theorem:gluck}
\cite{Gluck}
\label{combrigid}
Let $G=(V,E)$ be a graph with $|V|\geq d+1$. Then $G$ is rigid in $\RR^d$
if and only if $r_d(G)=d|V|-\binom{d+1}{2}$.
\end{theorem}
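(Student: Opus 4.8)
The plan is to route through the notion of \emph{infinitesimal rigidity} and to establish the equivalences
\[
\text{$G$ rigid in $\RR^d$} \iff \text{generic $(G,p)$ infinitesimally rigid} \iff \rk R(G,p)=d|V|-\tbinom{d+1}{2} \iff r_d(G)=d|V|-\tbinom{d+1}{2}.
\]
First I would recall that an \emph{infinitesimal motion} of $(G,p)$ is a map $q\colon V\to\RR^d$ with $(p(u)-p(v))\cdot(q(u)-q(v))=0$ for every $uv\in E$, i.e.\ an element of the kernel of $R(G,p)$. The motions induced by the $\binom{d+1}{2}$-dimensional group of Euclidean isometries (infinitesimal translations and rotations) always satisfy these equations, and whenever $p$ affinely spans $\RR^d$ — which holds at every generic placement, since $|V|\geq d+1$ — they span a $\binom{d+1}{2}$-dimensional space of \emph{trivial} motions. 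Hence $\rk R(G,p)\leq d|V|-\binom{d+1}{2}$, and one defines $(G,p)$ to be infinitesimally rigid exactly when equality holds, equivalently when every infinitesimal motion is trivial.

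The second ingredient is that $\rk R(G,p)$, as a function of $p$, attains its maximum on a dense open set, and that this maximum is by definition $r_d(G)$: the vanishing of each $k\times k$ minor is a closed algebraic condition, so the locus where the rank drops below its generic value is a proper algebraic subset that a generic $p$ avoids. Combined with the previous paragraph, this shows that the combinatorial identity $r_d(G)=d|V|-\binom{d+1}{2}$ holds if and only if generic realizations of $G$ are infinitesimally rigid, which gives the last two equivalences above.

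It therefore remains to prove that for generic $p$ the framework $(G,p)$ is rigid if and only if it is infinitesimally rigid; this is the Asimow–Roth theorem and is the technical heart of the argument. Consider the edge-length map $f\colon\RR^{d|V|}\to\RR^{|E|}$ with $f(p)_{uv}=\norm{p(u)-p(v)}^2$, whose Jacobian at $p$ equals $2R(G,p)$. For ``infinitesimally rigid $\Rightarrow$ rigid'' (valid for any $p$ spanning $\RR^d$), if $\rk R(G,p)=d|V|-\binom{d+1}{2}$ then the rank is maximal, hence locally constant, and the constant-rank theorem presents the fibre $f^{-1}(f(p))$ near $p$ as a smooth manifold of dimension $\binom{d+1}{2}$; since the congruence orbit of $p$ lies in this fibre and also has dimension $\binom{d+1}{2}$, the two coincide locally, so every nearby equivalent realization is congruent. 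The converse uses genericity crucially: at a generic $p$ the Jacobian rank is locally constant, so the fibre is again locally a manifold, now of dimension $d|V|-\rk R(G,p)$; if $(G,p)$ were not infinitesimally rigid this would exceed $\binom{d+1}{2}$, yielding a continuous family of pairwise non-congruent equivalent realizations and contradicting rigidity.

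I expect this converse to be the main obstacle: rigidity is an a priori weaker, purely topological condition, and extracting from it the algebraic statement about the Jacobian rank requires $p$ to be a regular point of the measurement map $f$ — precisely what genericity guarantees, and precisely where non-generic counterexamples live. Once the equivalence of rigidity and infinitesimal rigidity at generic configurations is secured, chaining the four equivalences above completes the proof.
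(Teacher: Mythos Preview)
The paper does not prove this theorem; it is stated as a known result and attributed to Gluck via the citation \cite{Gluck}, with no argument given. There is therefore no ``paper's own proof'' to compare against.

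Your sketch is the standard route to this result and is correct in outline: reduce rigidity to infinitesimal rigidity at generic configurations (the Asimow--Roth theorem), identify infinitesimal rigidity with the rank condition $\rk R(G,p)=d|V|-\binom{d+1}{2}$, and use that the rank of $R(G,p)$ is constant on generic configurations and equals $r_d(G)$. You have correctly located the only genuinely delicate step --- the converse ``rigid $\Rightarrow$ infinitesimally rigid'' at generic $p$ --- and your constant-rank/fibre-dimension argument is the right one. One small historical note: the equivalence of rigidity and infinitesimal rigidity for generic frameworks in arbitrary dimension is usually credited to Asimow and Roth; Gluck's original contribution was the $d=2$ surface case, though the paper follows common practice in citing Gluck for the general statement.
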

A graph is \emph{minimally rigid} in $\RR^d$ if it is rigid in $\RR^d$ but deleting any edge results in a flexible graph. By \cref{theorem:gluck}, minimally rigid graphs in $\RR^d$ on at least $d+1$ vertices have exactly $d|V| - \binom{d+1}{2}$ edges.

Stresses provide a dual viewpoint to rigidity that is often useful. Let $(G,p)$ be a framework. A vector $\omega \in \RR^E$ indexed by the edges of $G$ is an \emph{equilibrium stress} (or \emph{stress}, for short) of $(G,p)$ if $R(G,p)^T \omega = 0$. In other words, stresses of $(G,p)$ are the members of the cokernel of the rigidity matrix of $(G,p)$. It follows that the stresses of $(G,p)$ form a linear subspace of $\RR^E$ which we shall refer to as the \emph{space of stresses} of $(G,p)$. A graph is $\Rd$-independent if and only if every generic realization in $\RR^d$ is stress-free (i.e.\ the only stress for it is the zero vector). Similarly, a graph is an $\Rd$-circuit if and only if each generic realization in $\RR^d$ has a unique (up to scalar multiplication) nonzero stress, and this stress is everywhere nonzero. 

Let ${\cal M}$ be a matroid on ground set $E$. 
We can define a relation on the pairs of elements of $E$ by
saying that $e,f\in E$ are
equivalent if $e=f$ or there is a circuit $C$ of ${\cal M}$
with $\{e,f\}\subseteq C$.
This defines an equivalence relation. The equivalence classes are 
the \emph{connected components} of ${\cal M}$.
The matroid is \emph{connected} if there is only one equivalence class. 
A graph $G$ is \emph{$\Rd$-connected} if ${\cal R}_d(G)$ is connected.

$\mathcal{R}_2$-connected graphs played an important role in characterizing globally rigid graphs in $\RR^2$ (see \cref{theorem:2dimgloballyrigid} below). The following recent result of Steven J. Gortler and the authors suggest that $\Rd$-connected graphs may be useful in the study of $d$-dimensional global rigidity as well.

\begin{theorem}\label{theorem:Mconnected}
\cite[Theorem 3.5]{fullyreconstructible}
Let $G = (V,E)$ be a globally rigid graph in $\RR^d$ on $n\geq d+2$ vertices. 
Then $G$ is $\Rd$-connected.
\end{theorem}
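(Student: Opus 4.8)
The plan is to prove the contrapositive via the stress-matrix characterisation of global rigidity. Recall (Connelly's sufficient condition together with the necessity theorem of Gortler, Healy and Thurston) that for $n \geq d+2$ a graph is globally rigid in $\RR^d$ if and only if a generic framework $(G,p)$ carries an equilibrium stress $\omega$ whose associated \emph{stress matrix} $\Omega=\Omega(\omega)$ has the maximum possible rank $n-d-1$. Here $\Omega$ is the symmetric $V\times V$ matrix with off-diagonal entry $-\omega_{uv}$ in position $uv$; it always annihilates the $(d+1)$-dimensional \emph{trivial space} $T=\Span(\mathbf 1,p_1,\dots,p_d)$, so $\rk\Omega\le n-d-1$, with equality precisely for a globally rigid generic framework. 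Assuming $G$ is globally rigid, I would fix such a maximum-rank stress $\omega$ and suppose for contradiction that $\Rd(G)$ is disconnected.

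A disconnection gives a partition $E=E_1\sqcup E_2$ into nonempty unions of connected components of $\Rd(G)$. Since the components are the equivalence classes under ``lying in a common circuit'' and circuits are exactly the minimal supports of stresses, the stress space splits as the direct sum of the stresses supported on $E_1$ and those supported on $E_2$. Hence $\omega=\omega_1+\omega_2$ with $\omega_i$ supported on $E_i$, so $\Omega=\Omega_1+\Omega_2$, where $\Omega_i$ is supported on the block $V_i\times V_i$ for $V_i$ the set of endpoints of $E_i$. Each $\Omega_i$ annihilates $T$ as well as every vector supported off $V_i$, so $\operatorname{Im}\Omega_i\subseteq T^\perp\cap\RR^{V_i}$ and $\rk\Omega_i\le |V_i|-d-1$. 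Writing $W=V_1\cap V_2$, one checks that these two envelopes already span $T^\perp$ and meet exactly in the vectors on $W$ orthogonal to $T$.

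The crux of the argument is to upgrade this to the statement that the maximum rank $\rk\Omega=n-d-1$ can be attained only when $\rk\Omega_i=|V_i|-d-1$ for \emph{both} $i$. Granting this, Connelly's condition applied to the generic subframeworks $(G_i,p|_{V_i})$ (using $|V_i|\ge d+2$) shows each $G_i$ is globally rigid, hence rigid, so $r_d(G_i)=d|V_i|-\binom{d+1}{2}$ by \cref{theorem:gluck}. The proof then finishes with a short count. As the separation is a matroid direct sum, $r_d(G)=r_d(G_1)+r_d(G_2)$; since $G$ is rigid, $r_d(G)=dn-\binom{d+1}{2}$; and $|V_1|+|V_2|=n+|W|$. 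Substituting gives
\[
dn-\binom{d+1}{2}=d\,(n+|W|)-2\binom{d+1}{2},
\]
that is, $d\,|W|=\binom{d+1}{2}$, so $|W|=(d+1)/2$. On the other hand, $G$ is $(d+1)$-connected by Hendrickson's condition, and since redundant rigidity forbids $\Rd$-bridges, each $E_i$ contains an $\Rd$-circuit, which spans at least $d+2$ vertices; thus neither $V_i$ contains the other, $W$ is a vertex cut, and $|W|\ge d+1$. As $(d+1)/2<d+1$, this is the desired contradiction.

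I expect the main obstacle to be exactly the crux step, namely that maximal rank of the \emph{sum} $\Omega_1+\Omega_2$ forces each summand to be of maximal rank. Because $\operatorname{Im}\Omega_1$ and $\operatorname{Im}\Omega_2$ both lie in $T^\perp$ and overlap along the vectors supported on $W$, a naive rank count is off by exactly one and, when $|W|$ is large, formally permits one piece to be rank-deficient while the other compensates. Closing this gap cannot rely on the inequality $\rk\Omega\le n-d-1$ alone; it must use genericity of $p$ together with the fact that $\ker\Omega$ is \emph{exactly} $T$. I would attack it by writing $\Omega$ in block form relative to $V=(V_1\setminus W)\sqcup W\sqcup(V_2\setminus W)$ and showing that the vanishing of the ``trivial'' directions on each $V_i$ is incompatible with a rank drop in a single block unless an extra element of $\ker\Omega$ lying outside $T$ appears, which would already violate global rigidity.
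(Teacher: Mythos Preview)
The paper does not prove this theorem; it is quoted from \cite{fullyreconstructible} and used as a black box, so there is no in-paper argument to compare your proposal against.

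On the proposal itself: the gap you flag at the ``crux step'' is genuine, and the block-matrix sketch at the end does not close it. From $\ker\Omega=T$ one gets $\ker\Omega_1\cap\ker\Omega_2=T$ (each $\ker\Omega_i\supseteq T$, and $\ker(\Omega_1+\Omega_2)\supseteq\ker\Omega_1\cap\ker\Omega_2$), hence $\operatorname{Im}\Omega_1+\operatorname{Im}\Omega_2=T^\perp$. Combined with $\operatorname{Im}\Omega_i\subseteq T^\perp\cap\RR^{V_i}$ (which has dimension $|V_i|-d-1$), a dimension count already yields $|W|\ge d+1$ --- without invoking Hendrickson. But when $|W|\ge d+2$ the two envelopes $T^\perp\cap\RR^{V_i}$ meet in a $(|W|-d-1)$-dimensional subspace, and the count then permits, say, $\rk\Omega_1=|V_1|-d-1$ while $\rk\Omega_2=|V_2|-|W|<|V_2|-d-1$, the deficit being absorbed in the overlap. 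Your block-form idea appeals to nothing beyond the equality $\ker\Omega=T$, which is precisely the information already exhausted by this count; it cannot manufacture the ``extra element of $\ker\Omega$ lying outside $T$'' that you predict. So the route through ``each $G_i$ is globally rigid'' is blocked as stated, and a matroid separation gives no a priori control on $|W|$.

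A secondary point: your justification that neither $V_i$ contains the other (``each $E_i$ contains an $\Rd$-circuit on $\ge d+2$ vertices'') does not by itself exclude $V_2\subseteq V_1$. Granting the crux step, the clean argument is that $V_1=V$ would make $G_1$ rigid on $V$, so $r_d(G_1)=r_d(G)$, contradicting $r_d(G)=r_d(G_1)+r_d(G_2)$ with $r_d(G_2)>0$.
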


We shall also use the following ``dimension dropping'' result from the same paper.

\begin{theorem}\label{theorem:Mconnecteddimensiondropping}\cite[Theorem 5.1]{fullyreconstructible}
Let $G$ be $\Rd$-connected. Then $G$ is $\mathcal{R}_{d'}$-connected for every $1 \leq d' < d$.
\end{theorem}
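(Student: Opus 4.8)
The plan is to prove the contrapositive dimension by dimension: it suffices to show that if $G$ is $\Rd$-connected then it is $\mathcal{R}_{d-1}$-connected, and then to iterate this single step down to any $d'$. Throughout I would work with the space of stresses, which records the whole matroid: fix a generic $p\colon V\to\RR^d$, write $S_d=S_d(G,p)=\{\omega\in\RR^E : R(G,p)^T\omega=0\}$ for its stress space, and recall the standard linear-matroid fact that the circuits of $\Rd(G)$ are exactly the minimal supports of nonzero elements of $S_d$. Consequently the connected components of $\Rd(G)$ are the classes of the \emph{finest} partition $E=\bigsqcup_i E_i$ for which $S_d=\bigoplus_i\bigl(S_d\cap\RR^{E_i}\bigr)$; in particular $G$ is $\Rd$-connected precisely when $S_d$ admits no nontrivial such coordinate-block decomposition.

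The link between consecutive dimensions is the projection $p'\colon V\to\RR^{d-1}$ obtained by deleting the last coordinate $p_d$ of $p$. Since $p'$ is again generic and the equilibrium equations in $\RR^d$ are just the equations in $\RR^{d-1}$ together with the single extra equation coming from the last coordinate, one has $S_d=S_{d-1}\cap\ker M$, where $M=M(p_d)$ is the one-dimensional stress operator $\omega\mapsto\bigl(\sum_{u}\omega_{uv}(p_d(v)-p_d(u))\bigr)_{v\in V}$ associated with the last-coordinate placement. In particular $S_d\subseteq S_{d-1}$, so there are (weakly) more stresses in lower dimension.

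Now assume $\mathcal{R}_{d-1}(G)$ is disconnected and take a nontrivial separator, giving $S_{d-1}=S_A\oplus S_B$ with $S_A,S_B$ supported on the complementary edge sets $A,B$. For $\omega\in S_d\subseteq S_{d-1}$ write $\omega=\omega_A+\omega_B$ with $\omega_A\in S_A,\ \omega_B\in S_B$; then $\omega_A,\omega_B$ again lie in $S_d$ if and only if $M\omega_A=M\omega_B=0$. Hence if the images satisfy $M(S_A)\cap M(S_B)=\{0\}$, every stress in $S_d$ splits, so $S_d=(S_d\cap\RR^{A})\oplus(S_d\cap\RR^{B})$ decomposes and $\Rd(G)$ is disconnected, contradicting our hypothesis. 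The whole step therefore reduces to the transversality statement
\[
M(S_A)\cap M(S_B)=\{0\}\qquad\text{for generic }p_d .
\]
(The same reduction can be packaged via circuits: in a connected matroid any two elements lie in a common circuit, so it would be enough to show that every $\Rd$-circuit is $\mathcal{R}_{d-1}$-connected, which unwinds to exactly the same transversality applied to the partition of a circuit into its $\mathcal{R}_{d-1}$-components.)

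The transversality is where the real work lies. Since $M(S_A)\subseteq\RR^{V(A)}_{0}$ and $M(S_B)\subseteq\RR^{V(B)}_{0}$ (each image has vanishing coordinate-sum), any common vector is supported on $T:=V(A)\cap V(B)$ with zero sum, so a priori $\dim\bigl(M(S_A)\cap M(S_B)\bigr)\le |T|-1$. In the base case $d-1=1$ this already closes the argument: $\mathcal{R}_1$ is the graphic matroid, whose components are the $2$-connected blocks of $G$, and distinct blocks share at most one vertex, so $|T|\le 1$ and the intersection is forced to be $\{0\}$, completing the step $d=2\to d'=1$. The main obstacle is the general case $|T|\ge 2$: one must show that the extra generic coordinate $p_d$ places $M(S_A)$ and $M(S_B)$ in sufficiently general position inside $\RR^{T}_{0}$ that no common vector survives. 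I expect to handle this by combining structural control on how far two $\mathcal{R}_{d-1}$-components (equivalently, the two sides of a separation) can overlap in vertices with the fact that ``$M(S_A)\cap M(S_B)=\{0\}$'' is a Zariski-open condition in $p_d$; it then suffices to exhibit a single placement of the last coordinate (or a suitable degeneration) realizing the trivial intersection, after which genericity of $p_d$ over the field generated by $p'$ yields the result.
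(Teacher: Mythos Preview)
The paper does not give a proof of this statement; it is quoted as \cite[Theorem 5.1]{fullyreconstructible} and used as a black box. So there is no in-paper argument to compare against, and the question is simply whether your proposal stands on its own.

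It does not: the transversality step, which you yourself flag as ``where the real work lies'', is left unproved. For the general descent $d\to d-1$ you only offer a plan---``$M(S_A)\cap M(S_B)=\{0\}$ is Zariski-open in $p_d$, so exhibit one witness and invoke genericity''---without producing the witness or any structural bound on the overlap $T=V(A)\cap V(B)$ that would make a dimension count succeed. For $d\ge 3$ there is no a priori bound on $|T|$, and both $M(S_A)$ and $M(S_B)$ can have dimension as large as $|T|-1$, so transversality is genuinely nontrivial and you have not supplied the missing ingredient.

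Even the base case $d-1=1$ is not airtight as written. Your claim ``distinct blocks share at most one vertex, so $|T|\le 1$'' implicitly treats $A$ and $B$ as single blocks, but a separator of the graphic matroid is a \emph{union} of blocks on each side, and such unions can meet in several cut vertices (three triangles in a chain with $A$ the two outer ones and $B$ the middle one gives $|T|=2$). The conclusion for $d-1=1$ is still salvageable---each block of $A$ contributes its own zero-sum constraint, which forces the common vector to vanish coordinate by coordinate---but that finer argument is not the one you gave. Until the general transversality is actually established, or the argument is rerouted through a structural statement about $\Rd$-circuits inside $\mathcal{R}_{d-1}$, this remains a sketch rather than a proof.
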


At some points we shall use the properties of the well-known \emph{coning operation}. The \emph{cone} of a graph $G$, denoted by $G^v$, is obtained from $G$ by adding a new
vertex $v$ and new edges from $v$ to each vertex of $G$. Coning provides a transfer between rigidity properties in $\RR^d$ and $\RR^{d+1}$.

\begin{theorem} \cite{whiteley_cones}\label{theorem:rigidconing}
A graph $G$ is 
rigid in $\RR^d$ ($\Rd$-independent, respectively) if and only if the cone of $G$
is rigid in $\R^{d+1}$ ($\mathcal{R}_{d+1}$-independent, respectively).
\end{theorem}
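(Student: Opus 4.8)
The plan is to reduce the theorem to a single rank formula relating the two rigidity matrices, namely
\[ r_{d+1}(G^v) = r_d(G) + |V|, \]
and then read off both the rigidity and the independence equivalences from it. For rigidity, suppose $|V| \geq d+1$ and recall from \cref{theorem:gluck} that $G$ is rigid in $\RR^d$ iff $r_d(G) = d|V| - \binom{d+1}{2}$, while $G^v$, having $|V|+1$ vertices, is rigid in $\RR^{d+1}$ iff $r_{d+1}(G^v) = (d+1)(|V|+1) - \binom{d+2}{2}$. Substituting the rank formula and using $\binom{d+2}{2} = \binom{d+1}{2} + (d+1)$, the latter condition simplifies to exactly $r_d(G) = d|V| - \binom{d+1}{2}$, so the two rigidity statements coincide. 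For independence, $G^v$ has $|E| + |V|$ edges, so $G^v$ is $\mathcal{R}_{d+1}$-independent iff $r_{d+1}(G^v) = |E| + |V|$, which by the rank formula is equivalent to $r_d(G) = |E|$, i.e.\ to $G$ being $\Rd$-independent. (The few small-vertex cases, where coning is checked directly, pose no difficulty.) Thus everything rests on the rank formula.

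For the inequality $r_{d+1}(G^v) \geq r_d(G) + |V|$ I would exhibit one convenient realization. Place $G$ generically in the affine hyperplane $\{x_{d+1} = 1\} \cong \RR^d$ via $\hat p(u) = (p(u), 1)$ with $p$ generic, and put the apex at the origin, $\hat p(v) = 0$. In the resulting rigidity matrix $R(G^v, \hat p)$ the rows split into two groups. A base edge $uw \in E$ contributes the vector $p(u) - p(w)$ in the first $d$ coordinate columns of $u$ and $w$, while its last-coordinate and apex entries all vanish, since the final coordinate of $\hat p(u) - \hat p(w)$ is $0$; hence these rows form precisely $R(G,p)$ sitting in the base columns. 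Each cone edge $vu$ contributes a $\pm 1$ in the last-coordinate column of $u$, and these $|V|$ columns are zero on every base-edge row. Using these identity entries to clear the last-coordinate and apex columns shows that the cone edges are independent and contribute exactly $|V|$ to the rank, while the remaining rows reduce to $R(G,p)$; equivalently, writing out the equilibrium stress conditions forces every cone edge to carry zero stress and identifies the remaining stresses with those of $(G,p)$ in $\RR^d$. Either way $\rk R(G^v, \hat p) = |V| + \rk R(G,p) = |V| + r_d(G)$, and since a specific realization can only attain rank at most the generic rank, $r_{d+1}(G^v) \geq r_d(G) + |V|$.

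The reverse inequality is where the real work lies. Take a generic realization $\hat q$ of $G^v$ in $\RR^{d+1}$ and translate so that the apex sits at the origin; no other vertex is then at the origin. Projecting the points $\hat q(u)$, $u \in V$, centrally from the origin onto a generic rational hyperplane $H \cong \RR^d$ produces a realization $(G, q')$ in $H$. The crux is the claim that central projection from the apex changes the rank of the rigidity matrix by exactly $|V|$, that is, $\rk R(G^v, \hat q) = |V| + \rk_H R(G, q')$; this is the infinitesimal form of the projective invariance of rigidity and is the main obstacle, since it requires tracking how the rows of the $(d+1)$-dimensional cone matrix transform into those of the $d$-dimensional base matrix under the projection, together with the $|V|$ independent cone edges. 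Granting it, $\rk_H R(G, q') \leq r_d(G)$ because the generic rank is maximal, so $\rk R(G^v, \hat q) \leq |V| + r_d(G)$; as $\hat q$ is generic this yields $r_{d+1}(G^v) \leq r_d(G) + |V|$. Combined with the previous paragraph this gives the rank formula and hence the theorem. (One may alternatively verify that the central projection of a generic configuration is again generic, so that $\rk_H R(G,q') = r_d(G)$ and the rank formula holds on the nose.)
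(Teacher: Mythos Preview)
The paper does not prove \cref{theorem:rigidconing}; it is stated as a cited result from \cite{whiteley_cones} and used as a black box. So there is no ``paper's own proof'' to compare against.

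Your sketch is essentially the original Whiteley argument: establish the rank identity $r_{d+1}(G^v) = r_d(G) + |V|$ by a direct computation on a well-chosen realization (apex at the origin, base in a coordinate hyperplane) for one inequality, and by central projection from the apex for the other. You have correctly identified the substantive step: the claim that central projection from the apex onto a hyperplane changes the rank of the rigidity matrix by exactly $|V|$. This is indeed the content of the projective invariance of infinitesimal rigidity, and in your write-up it is asserted rather than proved. If you want a self-contained argument, the cleanest route is to observe that scaling each vertex $u$ independently along the ray from the apex (i.e.\ replacing $\hat q(u)$ by $\lambda_u \hat q(u)$) corresponds to multiplying the rows of $R(G^v,\hat q)$ on the left by a nonsingular diagonal matrix and the columns on the right by a block-diagonal nonsingular matrix, so the rank is preserved; choosing the $\lambda_u$ to land every base vertex in the hyperplane $H$ then reduces you to exactly the configuration of your first paragraph, where you already computed the rank to be $|V| + \rk R(G,q')$. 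With that filled in, the proof is complete and matches the source.
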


\begin{theorem} \cite{connelly.whiteley_2010} \label{theorem:globallyrigidconing}
A graph $G$ is globally rigid in $\RR^d$ if and only if the cone of $G$ is globally rigid in $\RR^{d+1}$.
\end{theorem}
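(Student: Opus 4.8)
The plan is to reduce the statement to the stress-matrix characterization of generic global rigidity and then to show that coning preserves the maximum rank of an equilibrium stress matrix. Recall the combination of Connelly's sufficient condition and the necessity result of Gortler, Healy and Thurston: for a graph $H$ on $n \geq k+2$ vertices, $H$ is globally rigid in $\RR^k$ if and only if some generic realization admits an equilibrium stress $\omega$ whose stress matrix $\Omega$ has the maximum possible rank $n-k-1$. I would apply this to the pair $(G,d)$, where $G$ has $n = |V|$ vertices, and to the pair $(G^v,d+1)$, where $G^v$ has $n+1$ vertices. The arithmetic fact driving the whole argument is that these two maximal ranks coincide, since $n - d - 1 = (n+1) - (d+1) - 1$.

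Next I would set up the coning correspondence at the level of both frameworks and stresses. Realizing $\RR^d$ as the hyperplane $\{x_{d+1} = 1\} \subseteq \RR^{d+1}$ and placing the cone apex at the origin, a base framework $(G,p)$ lifts to the realization $\tilde p$ of $G^v$ given by $\tilde p(u) = (p(u),1)$ for $u \in V$ and $\tilde p(v) = 0$. Comparing the last coordinate in the equilibrium equations shows that every equilibrium stress $\omega$ of $(G,p)$ extends to one of $(G^v,\tilde p)$ (with the cone-edge coefficients determined, and in fact vanishing for this placement), and conversely that the stress space of the cone restricts isomorphically onto that of the base; the dimension bookkeeping here is consistent with Whiteley's coning theorem for rigidity (\cref{theorem:rigidconing}). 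Under this correspondence the stress matrix $\tilde\Omega$ of the cone is obtained from $\Omega$ by a bordering that does not increase the rank, so a base stress of rank $n-d-1$ corresponds to a cone stress of rank $n-d-1$, i.e.\ a stress of maximum rank on each side.

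With this correspondence in hand, the forward direction follows quickly. If $G$ is globally rigid in $\RR^d$, then a generic $(G,p)$ has a maximal stress; coning produces a (possibly non-generic) realization of $G^v$ carrying a rank-$(n-d-1)$ stress. Since matrix rank is lower semicontinuous on the parameter space of frameworks-together-with-stresses, a generic realization of $G^v$ must then have stress-matrix rank at least $n-d-1$, hence exactly $n-d-1$, and Connelly's sufficiency yields global rigidity of $G^v$ in $\RR^{d+1}$.

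I expect the genericity issues, and in particular the converse direction, to be the main obstacle, because the coned framework $\tilde p$ is never generic in $\RR^{d+1}$: its base lies on a hyperplane. To run the argument backwards one must know that this non-generic coned framework nonetheless attains the \emph{generic} value of the stress-matrix rank, so that restricting a maximal cone stress yields a maximal stress of the generic base $(G,p)$. The plan here is to invoke the projective invariance of global rigidity and of the maximal stress-matrix rank (also established by Connelly and Whiteley): a generic realization of $G^v$ in $\RR^{d+1}$ can be moved by a projective transformation into standard coned position, carrying its base to a generic realization of $G$ in $\RR^d$, so the standard coned framework inherits the generic rank. Finally I would dispose of the degenerate cases ($|V| < d+2$, or $G$ complete) separately, where the global rigidity of $G$ and of $G^v$ can be checked directly.
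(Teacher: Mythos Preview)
The paper does not prove this theorem at all: \cref{theorem:globallyrigidconing} is stated in the preliminaries as a cited result from Connelly and Whiteley \cite{connelly.whiteley_2010}, with no proof or sketch given. There is therefore nothing in the paper to compare your attempt against.

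That said, your outline is essentially the strategy of the original Connelly--Whiteley paper: reduce to the stress-matrix characterization (\cref{theorem:stressmatrix}), observe the rank coincidence $n-d-1=(n+1)-(d+1)-1$, transfer stresses between a base framework and its cone, and handle the genericity mismatch via projective invariance. Your identification of the converse direction as the delicate point is accurate; the coned framework $(G^v,\tilde p)$ with the apex at the origin and the base on the hyperplane $x_{d+1}=1$ is indeed non-generic, and projective invariance of the maximal stress rank is exactly what Connelly and Whiteley use to bridge that gap. One small caution: in your chosen placement every cone-edge coefficient is forced to vanish, so the stress matrix of the cone is $\Omega$ bordered by a zero row and column; this is fine for the rank argument, but you should say explicitly why a stress supported entirely on the base edges still certifies global rigidity of $G^v$ via \cref{theorem:stressmatrix} (it does, since the rank is all that matters), rather than leaving it implicit.
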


\cref{theorem:rigidconing} also implies that an edge $e$ of $G$ is an $\Rd$-bridge if and only if $e$ is an $\mathcal{R}_{d+1}$-bridge in $G^v$, see \cite[Lemma 3]{KK}.

\subsection{Globally rigid graphs and stress matrices}

It follows from the definitions  that globally rigid graphs are rigid. 
The following much stronger necessary conditions of global rigidity 
 are due to Hendrickson \cite{hendrickson_1992}.
We say that a graph is \textit{redundantly rigid} in $\mathbb{R}^d$ if it
remains rigid in $\RR^d$ after deleting any edge. A graph is \emph{$k$-connected} for some positive integer $k$ if it has at least $k+1$ vertices and it remains connected after
deleting any set of less than $k$ vertices. 

\begin{theorem}\cite{hendrickson_1992}\label{theorem:hendrickson}
Let $G$ be a graph on $n \geq d+2$ vertices. Suppose that $G$ is globally rigid in $\RR^d$. Then $G$ is $(d+1)$-connected and redundantly rigid in $\RR^d$.
\end{theorem}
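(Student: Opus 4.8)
The plan is to prove both conclusions by contradiction, in each case starting from a \emph{generic} globally rigid realization $(G,p)$ and manufacturing an equivalent but non-congruent realization $(G,q)$. The key enabling observation is that the global rigidity of the generic framework $(G,p)$ is a statement about \emph{all} equivalent frameworks, not merely the generic ones: by definition, if $(G,q)$ is equivalent to $(G,p)$ and not congruent to it, then $(G,p)$ is not globally rigid, regardless of whether $q$ is generic. Consequently it suffices, in each case, to exhibit a single such $q$. I would establish the connectivity statement first and then use it in the redundant rigidity argument.

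For the connectivity statement, suppose for contradiction that $G$ has a vertex cut $S$ with $|S| \le d$, and let $A$ and $B$ be a partition of $V \setminus S$ into two nonempty sets with no edge between them. Since $|S| \le d$, the points $\{p(s) : s \in S\}$ span an affine subspace of dimension at most $d-1$, so there is a hyperplane $H$ containing them; let $\rho$ denote the reflection of $\RR^d$ across $H$. Define $q$ to agree with $p$ on $A \cup S$ and to equal $\rho \circ p$ on $B$. Because there are no edges between $A$ and $B$, and because $\rho$ is an isometry fixing $p(S)$ pointwise, the realization $(G,q)$ has the same length on every edge as $(G,p)$, so the two are equivalent. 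It then remains to check that for generic $p$ the hyperplane $H$ can be chosen so that $(G,q)$ is not congruent to $(G,p)$; this is where some care is needed, since one must rule out an isometry $\gamma$ fixing $p(A \cup S)$ pointwise that undoes the reflection on $B$. When $p(A \cup S)$ affinely spans $\RR^d$ the only such $\gamma$ is the identity and non-congruence is immediate; in the remaining low-vertex cases one chooses $H$ through $\mathrm{aff}(p(S))$ but avoiding $p(A)$ and argues that the existence of a compensating isometry is a non-generic, hence excludable, condition on $p$.

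For redundant rigidity, suppose some edge $e = uv$ is such that $G - e$ is flexible in $\RR^d$. Fix a generic $p$ and consider the fibre of the edge-length map of $G - e$ through $p$, that is, the set of realizations $q$ with $\norm{q(x) - q(y)} = \norm{p(x) - p(y)}$ for all edges $xy \ne e$, taken modulo congruence. Flexibility means the connected component $\overline{C}$ of this fibre containing $[p]$ is positive-dimensional. On $\overline{C}$ consider the (congruence-invariant) function $\ell = \norm{q(u) - q(v)}$. If $\ell$ were constant on $\overline{C}$, the flex would preserve all lengths of $G$, so $G$ itself would be flexible; but $G$ is globally rigid, hence rigid (so $r_d(G) = d|V| - \binom{d+1}{2}$ by \cref{theorem:gluck}), a contradiction. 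Therefore $\ell$ is non-constant. Since $G - e$ is connected (a cut edge would already violate the connectivity just established), fixed edge lengths bound the diameter, so $\overline{C}$ is compact; being a component of a real-algebraic set modulo congruence, it also has no free endpoints. The plan is to conclude that the generic value $\ell([p])$, which for generic $p$ is neither the maximum nor the minimum of $\ell$ on $\overline{C}$ and is attained transversally, must be attained at a second point $[q] \ne [p]$ of $\overline{C}$. That second $q$ is a realization equivalent to $p$ as a framework of the full graph $G$ (all edges of $G - e$ match because $q \in \overline{C}$, and $e$ matches because $\ell([q]) = \ell([p])$) but is not congruent to $p$, giving the desired contradiction.

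The main obstacle is the last step of the redundant-rigidity argument: showing that a non-extremal value of $\ell$ is attained at least twice on $\overline{C}$. This is intuitively clear when $\overline{C}$ is a smooth loop, since a non-constant continuous function on a circle takes each interior value an even number of times, but it requires justifying that $\overline{C}$ genuinely behaves like a loop, i.e.\ that its compactness and lack of free endpoints (coming from the real-algebraic structure of the fibre) force any transversally attained, non-extremal regular value to have at least two preimages. A secondary, more routine difficulty is the non-congruence bookkeeping in the connectivity argument, which amounts to verifying that the reflected realization cannot be realigned with the original by a global isometry for generic $p$.
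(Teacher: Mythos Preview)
The paper does not prove this theorem; it is quoted from Hendrickson's original paper \cite{hendrickson_1992} and used as a black box. So there is no ``paper's own proof'' to compare against. What you have outlined is essentially Hendrickson's original argument, and your identification of the two pressure points is accurate.

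For the connectivity half, your worry about the low-dimensional case is slightly overcomplicated. You do not need to control all isometries fixing $p(A\cup S)$; it is enough to exhibit a single pair $a\in A$, $b\in B$ with $\|p(a)-p(b)\|\neq\|q(a)-q(b)\|=\|p(a)-\rho(p(b))\|$. This fails only if $p(a)$ lies on $H$ or $p(b)$ lies on $H$, and for generic $p$ one can always choose $H\supseteq\mathrm{aff}(p(S))$ avoiding both, since $|S|\le d$. That disposes of the bookkeeping cleanly.

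For the redundant-rigidity half, the real content you are circling is Hendrickson's theorem that for generic $p$ the fibre of the edge-length map of $G-e$ (modulo congruence) is a \emph{smooth compact manifold without boundary}; this is where the work lies and is not merely a consequence of ``real-algebraic structure'' in the abstract. Once one has that, the conclusion is immediate and does not require $\overline{C}$ to be one-dimensional: if $\dim\overline{C}\ge 2$, a regular level set of $\ell$ has dimension at least $1$ and hence infinitely many points; if $\dim\overline{C}=1$, it is a disjoint union of circles and a regular value is hit an even number of times. Your phrase ``behaves like a loop'' undersells the higher-dimensional case, where the argument is actually easier. The genuine gap in your outline is therefore the smoothness/no-boundary claim, which needs the observation that generic $p$ is a regular point of the squared-edge-length map of $G-e$; everything else you wrote is correct.
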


For $d \in \{1,2\}$, the conditions of \cref{theorem:hendrickson} are, in fact, sufficient for global rigidity. 
It is well-known that a graph is globally rigid in $\R^1$ if and only if it is $2$-connected (see, e.g., \cite[Theorem 63.2.6]{handbook}).
The characterization of $2$-dimensional global rigidity is as follows.

\begin{theorem} \cite{jackson.jordan_2005}
\label{theorem:2dimgloballyrigid}
Let $G$ be a graph on at least four vertices.
The following assertions are equivalent.
\begin{enumerate}[label=\textit{(\alph*)}]
\item $G$ is globally rigid in $\R^2$,
\item $G$ is $3$-connected and redundantly rigid in $\R^2$,
\item $G$ is $3$-connected and $\mathcal{R}_2$-connected.
\end{enumerate}
\end{theorem}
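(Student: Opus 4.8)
The plan is to establish the three-way equivalence by proving $(a) \Rightarrow (b)$, $(b) \Leftrightarrow (c)$, and the substantial implication $(c) \Rightarrow (a)$. The first implication is immediate from Hendrickson's necessary conditions (\cref{theorem:hendrickson}): since $d=2$ and $|V| \geq 4 = d+2$, a globally rigid graph is automatically $3$-connected and redundantly rigid in $\R^2$. The whole content of the theorem therefore lies in the remaining two implications, the first of which is combinatorial and the second of which carries the real weight.

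For $(b) \Leftrightarrow (c)$ I would argue entirely within the rigidity matroid $\mathcal{R}_2$. The forward direction is the easier one: if $G$ is $\mathcal{R}_2$-connected then any two edges lie in a common $\mathcal{R}_2$-circuit; since each $\mathcal{R}_2$-circuit spans a rigid subgraph and these overlapping rigid subgraphs cover $E$, the graph $G$ is rigid, and as a connected matroid on more than one element has no coloops, $G$ has no $\mathcal{R}_2$-bridge, so deleting any edge preserves the rank $2|V|-3$ and hence rigidity (\cref{theorem:gluck}), giving redundant rigidity. For the converse I would use $3$-connectivity to rule out the only obstruction: if $\mathcal{R}_2(G)$ were disconnected, then $G$ would decompose into proper rigid subgraphs that pairwise overlap in at most two vertices, which contradicts $3$-connectivity. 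Making this precise requires the structure theory of the connected components of $\mathcal{R}_2(G)$, but it does not use genericity beyond what is already encoded in the matroid.

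The heart of the proof is $(c) \Rightarrow (a)$, for which I would combine two ingredients. The first is Connelly's sufficient condition for global rigidity: a generic framework $(G,p)$ in $\R^2$ on $n$ vertices is globally rigid whenever it admits an equilibrium stress whose associated stress matrix has the maximum possible rank $n-3$ (equivalently, corank $d+1 = 3$). The second is an inductive generation result for the graph class: every $3$-connected, $\mathcal{R}_2$-connected graph can be built from $K_4$ by a sequence of edge additions and $1$-extensions (delete an edge $xy$, add a new vertex $z$ joined to $x$, $y$, and one further vertex $w$), staying inside the class at every step. The base case $K_4$ is globally rigid and admits a generic stress matrix of full rank $4-3=1$. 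I would then show that each operation preserves the existence of a generic realization carrying a rank-$(n-3)$ stress matrix: edge addition only enlarges the stress space and can only raise the rank, while the $1$-extension is engineered so that a maximal-rank stress matrix on the smaller graph extends to one on the larger graph with the rank increased by exactly one.

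The main obstacle is precisely this last point: verifying that the $1$-extension preserves a maximal-rank stress matrix. The difficulty is that after splitting an edge and inserting the new vertex $z$, one must produce an equilibrium stress on the enlarged graph that is nonzero on all the new edges and whose stress matrix still has corank exactly $3$; extending the old stress by zero fails to be in equilibrium at $z$ and does not raise the rank. Overcoming this requires a careful perturbation argument that places $z$ generically and tracks how the kernel of the stress matrix changes under the extension, together with a genericity argument guaranteeing that the resulting framework may be taken generic. This is where the proof is genuinely delicate, and it is exactly the step that separates global rigidity from mere redundant rigidity.
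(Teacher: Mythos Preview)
The paper does not prove \cref{theorem:2dimgloballyrigid}; it is quoted from \cite{jackson.jordan_2005} as a known characterization and used as a black box throughout. There is therefore no ``paper's own proof'' to compare your proposal against.

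That said, your outline is a faithful sketch of the argument in the original reference: Hendrickson for $(a)\Rightarrow(b)$, a matroid-theoretic analysis of the $\mathcal{R}_2$-components for $(b)\Leftrightarrow(c)$, and the inductive construction (edge addition and $1$-extension from $K_4$) combined with Connelly's stress-matrix sufficient condition for $(c)\Rightarrow(a)$. Your identification of the delicate step---showing that a $1$-extension preserves the existence of a maximum-rank stress matrix---is also accurate; this is handled in \cite{jackson.jordan_2005} via a careful analysis (and in later treatments sometimes via Connelly's result that $1$-extension preserves generic global rigidity directly). One minor imprecision: in your $(b)\Rightarrow(c)$ direction you gesture at ``the structure theory of the connected components of $\mathcal{R}_2(G)$'' without saying what it gives you; the actual argument uses that the $\mathcal{R}_2$-components of a redundantly rigid graph induce rigid subgraphs that pairwise share at most one vertex, which is incompatible with $3$-connectivity unless there is only one component. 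This is not hard but deserves a sentence.
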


In contrast, in the $d\geq 3$ case the conditions of \cref{theorem:hendrickson}, together, are no longer
sufficent to imply global rigidity and the combinatorial characterization of globally rigid graphs in these dimensions is a major open question. However, there is a characterization based on the existence of stresses with particular properties which we recall now. 

Let $(G,p)$ be a realization in $\RR^d$ of the graph $G = (V,E)$, and let $\omega \in \RR^E$ be a stress of $(G,p)$. We define the \emph{stress matrix associated to $\omega$} as the matrix $\Omega \in \RR^{V \times V}$ with rows and columns indexed by the vertices of $G$ given by
\begin{equation*}
    \Omega_{uv} =
    \begin{cases}
      -\omega_{uv} & \text{if}\ uv \in E \\
      \sum_{w : uw \in E} \omega_{uw} & \text{if}\ u=v \\ 
      0 & \text{otherwise.}
    \end{cases}
\end{equation*}
In general, we say that $\Omega' \in \mathbb{R}^{V \times V}$ is a \emph{stress matrix of} $(G,p)$ if $\Omega'$ arises as the stress matrix associated to some stress $\omega' \in \RR^E$ of $(G,p)$.

If the set of points $p(v),v\in V$ is affinely spanning in $\RR^d$, then every stress matrix of $(G,p)$ has rank at most $|V| - d - 1$, see, e.g., \cite[Proposition 1.2]{connelly_2005}. In particular, this applies when $(G,p)$ is generic and $|V| \geq d+2$. Globally rigid graphs can be characterized by the existence of a stress matrix achieving this rank. The ``sufficiency'' part of the following theorem was proved by Connelly \cite{connelly_2005}, while ``necessity'' is due to Gortler, Healy and Thurston \cite{gortler.etal_2010}.  

\begin{theorem} \cite{connelly_2005,gortler.etal_2010} \label{theorem:stressmatrix}
Let $G = (V,E)$ be a graph on at least $d+2$ vertices. Then a generic realization of $G$ in $\RR^d$ is globally rigid if and only if it has a stress matrix of rank $|V|-d-1$. Moreover, if some generic realization in $\RR^d$ has a stress matrix of this rank, then every generic realization in $\RR^d$ has one.
\end{theorem}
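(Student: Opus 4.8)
The plan is to treat the two implications separately, since they rest on quite different ideas, and to dispose of the final ``moreover'' clause as a genericity statement. For the sufficiency direction (existence of a stress matrix of rank $|V|-d-1$ implies global rigidity) I would argue through the stress energy. Set $n = |V|$, let $\omega$ be the given stress and $\Omega$ its stress matrix, and let $p^{(i)} \in \RR^V$ denote the vector of $i$-th coordinates. A direct computation shows that the equilibrium condition $R(G,p)^T\omega = 0$ is equivalent to $\Omega p^{(i)} = 0$ for each $i$; together with $\Omega \mathbf 1 = 0$ this gives $\ker \Omega \supseteq \Span\{\mathbf 1, p^{(1)}, \dots, p^{(d)}\}$. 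Since the points $p(v)$ affinely span $\RR^d$ (as $n \geq d+2$), this span is $(d+1)$-dimensional, and the rank hypothesis forces equality $\ker\Omega = \Span\{\mathbf 1, p^{(1)}, \dots, p^{(d)}\}$. Now consider the quadratic energy $E(q) = \sum_{uv\in E}\omega_{uv}\norm{q(u)-q(v)}^2 = \sum_{i} (q^{(i)})^T \Omega\, q^{(i)}$. This depends only on the edge lengths, so $E(q) = E(p) = 0$ for every $(G,q)$ equivalent to $(G,p)$. The key observation is that if such a $q$ were in addition a critical point of $E$, i.e.\ $\Omega q^{(i)} = 0$ for all $i$, then every $q^{(i)}$ would lie in $\ker\Omega$, so $q$ would be an affine image of $p$; equivalence of edge lengths together with genericity of $p$ then upgrades this affine map to a congruence. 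Thus the entire content of the sufficiency direction reduces to showing that, for generic $p$, every equivalent $q$ is forced to be a critical point of $E$.

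This last claim is the main obstacle, and I expect it to need the genuinely hard part of Connelly's argument. I would attack it via the averaging trick: given equivalent $p, q$, the configurations $r = \tfrac12(p+q)$ and $s = \tfrac12(p-q)$ satisfy $(r(u)-r(v))\cdot(s(u)-s(v)) = \tfrac14(\norm{p(u)-p(v)}^2 - \norm{q(u)-q(v)}^2) = 0$ on every edge, so $(G,r)$ and $(G,s)$ form an orthogonal pair. Using genericity of $p$ to control the dimension of the real algebraic variety of frameworks equivalent to $p$ --- first deducing local rigidity from the rank condition, so that the component through $p$ is exactly its congruence orbit, and then excluding other components by a complexification-and-connectivity argument that feeds the orthogonal decomposition back into the stress --- one concludes that no non-congruent equivalent $q$ exists. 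I regard correctly implementing this complexification step, rather than the linear-algebra reduction above, as the technical heart of the sufficiency direction.

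For the necessity direction (global rigidity implies a stress matrix of rank $|V|-d-1$) I would follow the measurement-variety approach of Gortler, Healy and Thurston and argue by contraposition: assuming that the maximal rank attained by a stress matrix at a generic $p$ is some $k < n-d-1$, produce a non-congruent equivalent framework. The right setting is the complexified edge-measurement map $f : \CC^{dn} \to \CC^{E}$, whose generic fibre modulo complex Euclidean motions is finite for a rigid graph; the crucial dictionary relates the maximal stress-matrix rank at $p$ to the local structure of this fibre, so that a deficiency $k < n-d-1$ forces the fibre to be positive-dimensional or to contain more than one point modulo motions. By a monodromy/deck-transformation argument over $\CC$ one then transports this extra structure into a second \emph{real} point of the fibre over $f(p)$, i.e.\ a framework equivalent but not congruent to $(G,p)$, contradicting global rigidity. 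The ``moreover'' clause follows because the maximal stress-matrix rank is governed by the non-vanishing of polynomials with rational coefficients and is therefore constant across generic configurations, so attaining rank $n-d-1$ at one generic realization forces it at every generic realization. The main difficulty here is precisely the transfer from a complex tangent/monodromy statement back to a genuine real, non-congruent solution, which is where the algebraic-geometric machinery of Gortler--Healy--Thurston is indispensable.
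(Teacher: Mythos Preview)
This theorem is not proved in the paper. It is quoted as a known characterization, with the sufficiency direction attributed to Connelly and the necessity direction to Gortler, Healy and Thurston, and is then used as a black box in the proofs of \cref{theorem:minimallygloballyrigid} and \cref{theorem:sharp}. There is therefore no argument in the paper against which to compare your proposal.

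For what it is worth, your outline is broadly faithful to the original sources. On the sufficiency side you have Connelly's two-step structure --- any equivalent $q$ must be an affine image of $p$ because each coordinate vector $q^{(i)}$ lands in $\ker\Omega$, and an equivalent affine image of a generic $p$ is a congruence via the ``edge directions do not lie on a conic at infinity'' condition --- and you correctly flag that the substantive content is showing that $\omega$ remains an equilibrium stress for every equivalent $(G,q)$. Your account of the necessity direction (measurement variety, degree/monodromy over $\CC$, transfer back to a real non-congruent realization) and of the ``moreover'' clause (maximal stress-matrix rank is governed by polynomial conditions with rational coefficients, hence constant on generic configurations) is likewise accurate in outline.
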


We shall also need the following result.

\begin{theorem}\label{theorem:universallyrigid}\cite[Theorem 2.10]{universallyrigid}
Let $G = (V,E)$ be a globally rigid graph on at least $d+2$ vertices. Then there exists a generic realization of $G$ in $\RR^d$ that has a positive semidefinite stress matrix of rank $|V|-d-1$.
\end{theorem}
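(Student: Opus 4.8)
The statement is precisely the assertion that a generically globally rigid graph admits a generic \emph{super stable} realization, and I would approach it through the theory of universal rigidity and semidefinite duality. First I would reformulate the goal: a positive semidefinite stress matrix $\Omega \succeq 0$ of rank $|V|-d-1$ for a realization $(G,p)$ is exactly a super-stability certificate, since its kernel is then forced to be the $(d+1)$-dimensional space spanned by the all-ones vector $\mathbf{1}$ and the coordinate vectors of $p$, with no extra directions; by the usual complementary-slackness computation (for any $(G,q)$ equivalent to $(G,p)$ one has $\langle \Omega, X_q\rangle = \langle \Omega, X_p\rangle = 0$ with both matrices positive semidefinite), such an $\Omega$ pins the configuration down up to an affine image, hence up to congruence for generic $p$. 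The key observation that frames the whole difficulty is that, by \cref{theorem:stressmatrix}, at \emph{every} generic realization there already exists a stress matrix of the required rank $|V|-d-1$; what is missing is \emph{positive semidefiniteness}, and crucially we are free to choose the realization at which we seek it.

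The plan for the existence proof is to study the semidefinite program whose feasible region is the spectrahedron of centered positive semidefinite Gram matrices consistent with the given squared edge lengths, and to exploit its facial duality with the cone of stress matrices (recall that the stress matrices of $(G,p)$ are exactly the elements of $\Span\{(e_u-e_v)(e_u-e_v)^T : uv \in E\}$). Concretely: (i) fix a generic realization $(G,p)$, center it so that $\sum_v p(v) = 0$, and consider the feasible set $\mathcal{F} = \{X \succeq 0 : X\mathbf{1}=0,\ X_{uu}+X_{vv}-2X_{uv} = \norm{p(u)-p(v)}^2 \text{ for } uv \in E\}$, in which the Gram matrix $X_p$ of $p$ sits with rank $d$; (ii) pass to a maximum-rank point $X^{\ast}$ of $\mathcal{F}$, which necessarily lies in the relative interior of $\mathcal{F}$; (iii) use the facial geometry of the spectrahedron — equivalently, strict complementarity of the associated SDP — to produce a positive semidefinite element $\Omega$ of the conjugate face, which is automatically a positive semidefinite stress matrix with $\rk \Omega = (|V|-1) - \rk X^{\ast}$. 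The remaining, decisive point is to guarantee $\rk X^{\ast} = d$, that is, that the chosen realization is \emph{dimensionally rigid} (admits no equivalent framework in any higher dimension); then $\rk \Omega = |V|-d-1$ as required, and working in the relative interior keeps the realization generic.

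The main obstacle is exactly this last step, which is where $d$-dimensional global rigidity must be upgraded to genuinely dimension-independent (universal) rigidity. For a \emph{fixed} generic realization the conclusion can simply fail: global rigidity in $\RR^d$ only constrains equivalent frameworks that live in $\RR^d$, whereas a maximal-rank positive semidefinite stress certifies rigidity against equivalent frameworks in \emph{all} dimensions, a strictly stronger property. This is why it is essential that the realization be chosen rather than arbitrary, and the heart of the matter is a variational analysis inside the convex cone of positive semidefinite stress matrices: starting from the generally indefinite rank $|V|-d-1$ stress supplied by \cref{theorem:stressmatrix}, one must show that the maximum rank of a \emph{positive semidefinite} stress matrix, optimized over generic realizations, again reaches $|V|-d-1$. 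Proving that this maximum is attained — equivalently, producing a generic realization that is dimensionally rigid — while simultaneously verifying that the resulting framework stays generic, is the technical core and the step I expect to be hardest; it is here that the deep input of \cite{universallyrigid} enters, and I would not expect a short self-contained derivation from the combinatorial tools recalled above.
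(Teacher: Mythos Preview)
The paper does not give its own proof of this statement: \cref{theorem:universallyrigid} is quoted verbatim from \cite[Theorem~2.10]{universallyrigid} and used as a black box in the proof of \cref{theorem:sharp}. So there is no in-paper argument to compare your proposal against.

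That said, your outline is an accurate high-level description of the strategy pursued in \cite{universallyrigid}: recast the problem via the spectrahedron of centered Gram matrices compatible with the edge lengths, use facial duality / strict complementarity to produce a PSD stress matrix on the conjugate face with rank $(|V|-1)-\rk X^{\ast}$, and then argue that one can \emph{choose} the generic realization so that the maximum-rank feasible Gram matrix has rank exactly $d$ (dimensional rigidity). You are also right that the last step is where all the difficulty lies and that it genuinely requires the freedom to vary the realization; a fixed generic realization of a globally rigid graph need not be universally rigid, so the PSD stress of maximal rank need not reach $|V|-d-1$ there. Your proposal is an honest sketch rather than a proof --- in particular, the existence of a generic, dimensionally rigid realization is asserted rather than established, and the genericity-preservation claim (``working in the relative interior keeps the realization generic'') needs justification --- but since the present paper itself defers entirely to \cite{universallyrigid} for this result, that is appropriate here.
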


We recall the following result, which was already mentioned in the Introduction.

\begin{theorem}\label{theorem:dimensiondropping}\cite{jordan_extremal}
Let $G$ be a graph. If $G$ is rigid in $\RR^{d+1}$, then $G$ is globally rigid in $\RR^{d}$.
\end{theorem}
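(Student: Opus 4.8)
The plan is to use the stress-matrix characterization of global rigidity (\cref{theorem:stressmatrix}) together with the elementary relationship between equilibrium stresses in dimensions $d$ and $d+1$. Write $n = |V|$. The cases $n \le d+1$ are immediate: rigidity in $\RR^{d+1}$ forces $G$ to be complete, and complete graphs are globally rigid in every dimension, so I would assume $n \ge d+2$. Fix jointly generic coordinates and split them into a generic realization $p \colon V \to \RR^d$ together with a generic height vector $z \in \RR^V$, so that $\hat p = (p, z) \colon V \to \RR^{d+1}$ is a generic realization of $G$. The key dictionary is: writing $\Omega_\omega$ for the stress matrix of $\omega \in \RR^E$, one has $\Omega_\omega \mathbf 1 = 0$ always, $\omega$ is a stress of $p$ in $\RR^d$ iff $\Omega_\omega p_i = 0$ for each coordinate function $p_1, \dots, p_d$, and $\omega$ is a stress of $\hat p$ in $\RR^{d+1}$ iff in addition $\Omega_\omega z = 0$. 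Thus, letting $S_d$ and $S_{d+1}$ denote the spaces of stresses of $p$ and $\hat p$, respectively, we have $S_{d+1} = \{\omega \in S_d : \Omega_\omega z = 0\}$. The goal is to produce a single $\omega \in S_d$ whose stress matrix has rank exactly $n-d-1$; by \cref{theorem:stressmatrix} this certifies that $G$ is globally rigid in $\RR^d$.

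To see how rigidity in $\RR^{d+1}$ enters, consider the linear map $\Phi \colon S_d \to \RR^V$ given by $\Phi(\omega) = \Omega_\omega z$. By the dictionary its kernel is exactly $S_{d+1}$. Since $\Omega_\omega$ is symmetric and annihilates $\mathbf 1, p_1, \dots, p_d$, its image, and hence $\operatorname{im}\Phi$, lies in the orthogonal complement $W := \langle \mathbf 1, p_1, \dots, p_d\rangle^\perp$, which has dimension $n-d-1$. Counting dimensions gives $\dim \operatorname{im}\Phi = \dim S_d - \dim S_{d+1} = r_{d+1}(G) - r_d(G)$. Because $G$ is rigid in $\RR^{d+1}$, \cref{theorem:gluck} yields $r_{d+1}(G) = (d+1)n - \binom{d+2}{2}$, and combining $\operatorname{im}\Phi \subseteq W$ with the standard bound $r_d(G) \le dn - \binom{d+1}{2}$ forces the equality $r_d(G) = dn - \binom{d+1}{2}$. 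In other words, rigidity in $\RR^{d+1}$ already implies rigidity in $\RR^d$, and moreover $\Phi$ maps $S_d$ \emph{onto} all of $W$.

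It remains to upgrade the surjectivity of $\Phi$ — which only controls the single vector $\Omega_\omega z$ — into the existence of a stress matrix of full rank $n-d-1$. I would do this by a fiber-dimension argument on the incidence variety $Z = \{(\omega, z) \in S_d \times \RR^V : \Omega_\omega z = 0\}$. Projecting to the first factor, the fiber over a generic $\omega$ is the linear space $\ker \Omega_\omega$, so the component $Z_0$ dominating $S_d$ has dimension $\dim S_d + (n - \rho)$, where $\rho$ is the generic (equivalently maximal) rank of $\Omega_\omega$ over $\omega \in S_d$; one always has $\rho \le n-d-1$. Projecting to the second factor, the fiber over a generic $z$ is $\ker \Phi = S_{d+1}$, of dimension $\dim S_d - (n-d-1)$ by the surjectivity just established. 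Equating the two expressions for the dimension yields $\rho = n-d-1$, so a generic $\omega \in S_d$ has a stress matrix of the required rank; since $p$ is generic, \cref{theorem:stressmatrix} then gives global rigidity in $\RR^d$. The main obstacle is precisely this last step: one must verify that the component of $Z$ dominating $S_d$ also dominates $\RR^V$ under the second projection — equivalently, that the kernels $\ker \Omega_\omega$ sweep out all of $\RR^V$ as $\omega$ varies over $S_d$ — so that the two fiber-dimension computations genuinely refer to the same component. This is where the genericity of $z$ and the surjectivity of $\Phi$ must be combined carefully; everything preceding it is routine linear algebra.
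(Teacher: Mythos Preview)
The paper does not prove this result; it simply quotes it from \cite{jordan_extremal}. The argument there is short and combinatorial: for each vertex $v$, adding every missing edge at $v$ yields a supergraph of $G$ (hence still rigid in $\RR^{d+1}$) which is the cone over $G-v$, so by \cref{theorem:rigidconing} the graph $G-v$ is rigid in $\RR^d$. Thus $G$ is vertex-redundantly rigid in $\RR^d$, and Tanigawa's theorem \cite{tanigawa_2015} (mentioned in \cref{section:concluding}) gives global rigidity in $\RR^d$. Your route through \cref{theorem:stressmatrix} is genuinely different, and the first half is correct and pleasant: the surjectivity of $\Phi$ and the consequence $r_d(G)=dn-\binom{d+1}{2}$ are fine.

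The fiber-dimension step, however, does not merely need care --- it cannot work in the form you propose. The two generic fibers live in \emph{different} irreducible components of $Z$, always. On the $\pi_2$ side, for generic $z$ every $\omega$ in the fiber $\{\omega:\Omega_\omega z=0\}$ satisfies $z\in\ker\Omega_\omega$; since $z\notin\langle\mathbf 1,p_1,\dots,p_d\rangle$, this forces $\rk\Omega_\omega\le n-d-2$. So the whole $\pi_2$-generic fiber is disjoint from the max-rank locus $U\subseteq S_d$. On the $\pi_1$ side, the component $Z_0$ dominating $S_d$ is the closure of $\{(\omega,z):\omega\in U,\ z\in\ker\Omega_\omega\}$; in the very case you are trying to establish ($\rho=n-d-1$) one has $\ker\Omega_\omega=\langle\mathbf 1,p_1,\dots,p_d\rangle$ for all $\omega\in U$, so $Z_0=S_d\times\langle\mathbf 1,p_1,\dots,p_d\rangle$ and $\pi_2(Z_0)$ is a $(d{+}1)$-plane, certainly not dense in $\RR^V$. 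Hence the hypothesis you say must be ``verified'' --- that $Z_0$ dominates $\RR^V$ --- is actually false, and the equality of the two dimension counts is a coincidence rather than a consequence of your argument. To rescue the stress-matrix approach you would need an independent reason why the linear space $\{\Omega_\omega:\omega\in S_d\}$ of symmetric matrices contains a nonsingular element given only that evaluation at a generic vector is surjective; this is not a formal consequence of the incidence-variety bookkeeping.
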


Finally, for some of our examples we shall need the following result on the global rigidity of complete bipartite graphs.
This theorem quickly follows from some well-known results on global rigidity and bipartite graphs, see \cite{bipartite1}. For a different proof, see \cite{bipartite2}.

\begin{theorem} \label{theorem:bipartite} \cite[Theorem 63.2.2]{handbook}
A complete bipartite graph $K_{m,n}$ is globally rigid in $\RR^d$ if and only if $m,n \geq d+1$ and $m+n \geq \binom{d+2}{2}+1$.
\end{theorem}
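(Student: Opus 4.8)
The plan is to prove both directions through the stress‑matrix characterization of generic global rigidity (\cref{theorem:stressmatrix}), combined with the classical description of the equilibrium stresses of complete bipartite graphs due to Bolker and Roth. Write $A=\{a_1,\dots,a_m\}$ and $B=\{b_1,\dots,b_n\}$ for the two colour classes and fix a generic $p$. For $v\in A\cup B$ let $\hat p(v)=(p(v),1)\in\RR^{d+1}$ be the homogeneous coordinate vector, and let $\widehat A$ (resp.\ $\widehat B$) be the matrix whose rows are the $\hat p(a_i)^T$ (resp.\ $\hat p(b_j)^T$). A stress of $(K_{m,n},p)$ is encoded by a matrix $W=(\omega_{a_ib_j})\in\RR^{m\times n}$, and a short computation with the equilibrium conditions shows that $W$ is a stress if and only if the symmetric $(d+1)\times(d+1)$ matrix $M:=\widehat A^T W\widehat B$ admits the two representations $M=\sum_i r_i\,\hat p(a_i)\hat p(a_i)^T=\sum_j c_j\,\hat p(b_j)\hat p(b_j)^T$, where $r_i,c_j$ are the row and column sums of $W$. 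Thus $W\mapsto M$ sends the space of stresses into $U_A\cap U_B$, where $U_A$ and $U_B$ are the spans of the rank‑one forms $\hat p(a_i)\hat p(a_i)^T$ and $\hat p(b_j)\hat p(b_j)^T$ inside the $\binom{d+2}{2}$‑dimensional space of symmetric $(d+1)\times(d+1)$ matrices; its kernel consists precisely of the stresses with vanishing row and column sums, which form the tensor product $K_A\otimes K_B$ with $K_A=\{u:\widehat A^T u=0\}$ and $K_B=\{v:\widehat B^T v=0\}$.

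For necessity, first note that a globally rigid graph on at least $d+2$ vertices is $(d+1)$‑connected by \cref{theorem:hendrickson}; since the vertex connectivity of $K_{m,n}$ equals $\min(m,n)$, this forces $m,n\ge d+1$. For the bound $m+n\ge\binom{d+2}{2}+1$, suppose instead that $m+n\le\binom{d+2}{2}$. For generic $p$ the Veronese images $\hat p(a_i)\hat p(a_i)^T$ are linearly independent, so $U_A,U_B$ are generic subspaces of dimensions $m$ and $n$; as $m+n\le\binom{d+2}{2}$ they meet only in $0$. By the correspondence above every stress then has $M=0$, hence lies in $K_A\otimes K_B$; such a $W$ has $\mathrm{rank}\,W\le\min(m,n)-d-1$ and, because its row and column sums vanish, the associated stress matrix has the block form $\bigl(\begin{smallmatrix}0 & -W\\ -W^T & 0\end{smallmatrix}\bigr)$ of rank $2\,\mathrm{rank}\,W$. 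Consequently every stress matrix of $(K_{m,n},p)$ has rank at most $2(\min(m,n)-d-1)<m+n-d-1$, and \cref{theorem:stressmatrix} shows that $K_{m,n}$ is not globally rigid.

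For sufficiency, assume $m,n\ge d+1$ and $m+n\ge\binom{d+2}{2}+1$. Now $U_A$ and $U_B$ are generic subspaces whose dimensions sum to more than $\binom{d+2}{2}$, so $U_A\cap U_B\ne 0$, and the correspondence produces stresses with $M\ne 0$, that is, with nonzero row and column sums. It remains to exhibit one whose stress matrix attains the maximal possible rank $m+n-d-1$; by \cref{theorem:stressmatrix} this certifies global rigidity. The main obstacle is exactly this construction: unlike the necessity direction, merely having $U_A\cap U_B\ne 0$ does not pin down the rank of the stress matrix. The natural route, carried out in \cite{bipartite1,bipartite2}, is to select a positive definite $M\in U_A\cap U_B$ — realized by placing the points on a common sphere or ellipsoid — so that the resulting stress matrix is positive semidefinite with kernel exactly the column space of $\bigl(\begin{smallmatrix}\widehat A\\ \widehat B\end{smallmatrix}\bigr)$, and then to transfer the conclusion to an arbitrary generic realization via the genericity clause of \cref{theorem:stressmatrix} (cf.\ also \cref{theorem:universallyrigid}). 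Verifying that such a positive definite quadric exists and yields a stress matrix of the full rank $m+n-d-1$ is the technical heart of the argument.
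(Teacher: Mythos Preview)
The paper does not prove \cref{theorem:bipartite}; it is merely quoted with the remark that it ``quickly follows from some well-known results on global rigidity and bipartite graphs, see \cite{bipartite1}. For a different proof, see \cite{bipartite2}.'' There is therefore no in-paper proof to compare your attempt to, and the references you end up pointing at are exactly the ones the paper cites.

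As for the proposal on its own merits: your necessity argument is the standard Bolker--Roth stress analysis and is correct. The identification of the stress space with pairs $(M,W_0)$ where $M\in U_A\cap U_B$ and $W_0\in K_A\otimes K_B$, the generic transversality $U_A\cap U_B=0$ when $m+n\le\binom{d+2}{2}$, and the rank bound $2\,\mathrm{rank}\,W<m+n-d-1$ are all right.

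The sufficiency direction, however, is only a sketch, and one step does not quite go through with the tools you invoke. Placing the points on a common quadric gives a \emph{non-generic} configuration, so \cref{theorem:stressmatrix} as stated in the paper (which is phrased entirely for generic frameworks) does not apply directly, nor does its ``moreover'' clause, which transfers a maximum-rank stress from one \emph{generic} realization to another. What is actually needed is the lower semicontinuity of the maximum stress-matrix rank in $p$: once a special $p_0$ attains rank $m+n-d-1$, so does every nearby $p$, in particular a generic one, after which Connelly's sufficiency theorem applies. This is standard and is how \cite{bipartite1,bipartite2} proceed, but it is not literally contained in \cref{theorem:stressmatrix} or \cref{theorem:universallyrigid}. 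You flag this as ``the technical heart'' and defer to the references, which is honest, but it means your sufficiency argument is an outline rather than a proof.
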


\subsection{Linked and globally linked pairs}

Let $G = (V,E)$ be a graph. 
A pair $\{u,v\}$ of vertices is \emph{linked} in $G$ in $\RR^d$ if $r_d(G+uv)=r_d(G)$ holds. By basic matroid theory, this is equivalent to the existence of an $\Rd$-circuit in $G+uv$ containing the edge $uv$.
In particular, if $G$ is rigid in $\RR^d$ and $G_0 = (V,E_0)$ is a minimally rigid spanning subgraph, then for every edge $e \in E - E_0$, there is a unique $\Rd$-circuit in $G_0+e$, which contains $e$. This is called the \emph{fundamental $\Rd$-circuit of $e$ with respect to $G_0$}. If $(G,p)$ is a generic framework in $\RR^d$, then for every $e \in E - E_0$, there is a unique (up to scalar multiple) stress $\omega_e$ of $(G,p)$ supported on the fundamental $\Rd$-circuit of $e$ with respect to $G_0$. The stresses $\omega_e, e \in E - E_0$ generate the space of stresses of $(G,p)$: they are linearly independent and
\begin{equation*}
|E - E_0| = |E| - \rk(R(G,p)) = \dim \ker(R(G,p)^T) 
\end{equation*}
is exactly the dimension of the space of stresses of $(G,p)$.  


A pair of vertices $\{u,v\}$ in a framework $(G,p)$ in $\RR^d$ is \emph{globally linked} in $(G,p)$ if for every equivalent framework $(G,q)$ in $\RR^d$ we have $\norm{p(u) - p(v)} = \norm{q(u) - q(v)}$.
This notion of global linkedness in $\RR^d$ is not a ``generic'' property for $d\geq 2$: 
a pair may be globally linked in some generic realization of $G$ without being globally linked in all generic realizations, see \cite{JJSz}. 
We define the pair $\{u,v\}$ to be \emph{globally linked in $G$} in $\RR^d$ if it is globally linked in all generic realizations of $G$ in $\RR^d$. Thus, $G$ is globally rigid in $\RR^d$ if and only if all pairs of vertices of $G$ are globally linked in $G$ in $\RR^d$.

This notion is well-understood for $d=1$. For a graph $G$ and two vertices $u,v\in V$ we use $\kappa(u,v;G)$ to denote the maximum number of pairwise
internally vertex-disjoint $u,v$-paths in $G$. It can be shown that $\{u,v\}$ is globally linked in $G$ in $\R^1$ if and only if $uv$ is an edge of $G$ or if $\kappa(u,v;G) \geq 2$.
In the $d \geq 2$ case, the characterization of globally linked pairs in graphs is an open question.
However, the special case when $d=2$ and the graph is $\mathcal{R}_2$-connected is also known.

\begin{theorem} \cite[Theorem 5.7]{JJSz}
\label{theorem:2dimgloballylinked}
Let $G = (V,E)$ be a graph. If $G$ is $\mathcal{R}_2$-connected, then a pair $\{u,v\}$ of vertices in $G$ is globally linked in $G$ in $\RR^2$ if and only if $\kappa(u,v;G)\geq 3$ holds.
\end{theorem}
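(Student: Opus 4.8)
The two implications require rather different arguments, and only the sufficiency direction uses $\mathcal{R}_2$-connectivity. For \textbf{necessity} I would argue the contrapositive, noting that this implication holds in an arbitrary graph. Suppose $uv \notin E$ and $\kappa(u,v;G) \leq 2$ (the case $uv \in E$ being trivial). By Menger's theorem there is a vertex set $S$ with $|S| \leq 2$ separating $u$ from $v$; let $U$ be the vertex set of the component of $G - S$ containing $u$. Fix a generic realization $(G,p)$ and let $\sigma$ be the reflection of $\RR^2$ across a line containing $p(S)$ (such a line exists since $|S| \leq 2$). Define $q$ by $q(w) = \sigma(p(w))$ for $w \in U \cup S$ and $q(w) = p(w)$ otherwise. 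Because $\sigma$ fixes $p(S)$ and there are no edges between $U$ and $V \setminus (U \cup S)$, the framework $(G,q)$ is equivalent to $(G,p)$; but $\norm{q(u)-q(v)} = \norm{\sigma(p(u)) - p(v)} \neq \norm{p(u)-p(v)}$ for generic $p$. Hence $\{u,v\}$ is not globally linked.

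For \textbf{sufficiency} the starting point is the following transfer principle: if $u$ and $v$ both lie in a subgraph $G'$ of $G$ that is globally rigid in $\RR^2$, then $\{u,v\}$ is globally linked in $G$. Indeed, the restriction of a generic $p$ to $V(G')$ is again generic, and for any equivalent $(G,q)$ the restriction $(G',q|_{V(G')})$ is equivalent to $(G',p|_{V(G')})$; global rigidity of $G'$ then forces these restrictions to be congruent, so in particular $\norm{p(u)-p(v)} = \norm{q(u)-q(v)}$. By \cref{theorem:2dimgloballyrigid} such a $G'$ is exactly a $3$-connected, $\mathcal{R}_2$-connected subgraph. Thus it suffices to prove that an $\mathcal{R}_2$-connected graph with $\kappa(u,v;G) \geq 3$ contains a globally rigid subgraph through $u$ and $v$, which I would establish by induction on $|V|$.

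If $G$ is $3$-connected, then $G$ itself is globally rigid by \cref{theorem:2dimgloballyrigid} and we are done. Otherwise $G$ has a $2$-separation with parts $G_1, G_2$ meeting in $\{a,b\}$. Since $\kappa(u,v;G) \geq 3$, the pair $\{u,v\}$ is not separated by $\{a,b\}$, so both vertices lie on one side, say in $V(G_1)$. The plan is to pass to the smaller graph $G_1 + ab$ and invoke the inductive hypothesis. Two facts drive this. First, of any three internally disjoint $u$--$v$ paths at most one can enter $G_2$ (such a path must use both $a$ and $b$), so after replacing that detour by the edge $ab$ we obtain $\kappa(u,v; G_1 + ab) \geq 3$. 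Second---and this is where the combinatorial structure theory of $\mathcal{R}_2$-connected graphs enters---one shows that the vertex $2$-separation is a matroid $2$-sum, so that both $G_1 + ab$ and $G_2 + ab$ are again $\mathcal{R}_2$-connected. Induction then produces a globally rigid subgraph $H \subseteq G_1 + ab$ containing $u$ and $v$.

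The \textbf{main obstacle} is to turn $H$ into a genuine subgraph of $G$ and to propagate global linkedness across the $2$-sum. If $H$ does not use the virtual edge $ab$, then it already lies in $G$. If it does, then one must know that $\norm{p(a)-p(b)}$ is pinned down by the $G_2$-side, i.e.\ that $\{a,b\}$ is globally linked in $G_2$; only then can an equivalent realization of $G$ be fed into $H$ as an equivalent realization carrying the correct $ab$-length. Establishing this ``marker pair'' property is the technical heart of the argument: one needs that in a $2$-sum of $\mathcal{R}_2$-connected graphs the gluing pair is globally linked in each summand. I expect this to require the finer structure theory of $\mathcal{R}_2$-connected graphs---in particular that they have minimum degree at least three and that their vertex $2$-separations correspond precisely to matroid $2$-sums---together with a strengthened inductive hypothesis that also tracks global linkedness of the marker pairs. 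Matching vertex $2$-separations with matroid $2$-sums, and verifying that $\mathcal{R}_2$-connectivity is preserved in each summand, is the step I expect to be the most delicate.
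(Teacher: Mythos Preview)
The paper does not prove this statement. \Cref{theorem:2dimgloballylinked} is quoted from \cite{JJSz} as a background result in the preliminaries section and is used later as a black box (in the proof of \cref{theorem:2dimconjecture}); the present paper contains no argument for it. So there is nothing here to compare your proposal against.

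For what it is worth, your outline is broadly in the spirit of the original proof in \cite{JJSz}: the necessity direction via a partial reflection across a $2$-separator is exactly the standard argument, and the sufficiency direction in \cite{JJSz} does proceed by analysing $2$-separations of $\mathcal{R}_2$-connected graphs and reducing to globally rigid pieces. You have correctly identified the delicate point: one must show that the marker pair $\{a,b\}$ of a $2$-separation is itself globally linked in each summand, so that information about $\norm{p(a)-p(b)}$ transfers across the sum. In \cite{JJSz} this is handled not by a bare induction on $|V|$ but via the structure of $3$-blocks (the $\mathcal{R}_2$-connected components and their ``cleavage'' tree), which gives a cleaner bookkeeping for the marker pairs than the ad hoc strengthening of the inductive hypothesis you sketch. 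Your plan is on the right track but, as you yourself note, is not yet a proof; the missing ingredient is precisely the $3$-block decomposition machinery developed in \cite{JJSz}.
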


\section{Minimally globally rigid graphs}\label{section:minimallygloballyrigid}

In this section we prove our main theorem, an upper bound on the number of edges in a minimally globally rigid graph in $\RR^d$. Our proof is based on the characterization of globally rigid graphs given by \cref{theorem:stressmatrix}. We note that the $d = 1,2$ cases can also be obtained by combinatorial methods. The one-dimensional case follows from a result of Mader \cite{Mader} stating that if a graph $G = (V,E)$ is minimally $2$-connected, then $|E| \leq 2|V| - 3$, with equality holding only if $G$ is a triangle. In the $d = 2$ case, an inductive characterization of globally rigid graphs in $\RR^2$ from \cite{jackson.jordan_2005} can be used to show that if $G$ is minimally globally rigid in $\RR^2$ with $|V| \geq 4$, then $|E| \leq 3|V| - 6$, see \cite[Theorem 11]{jordan_extremal}. A more careful analysis of the proof also gives that equality only holds for $G = K_4$.

The following result is the key lemma in our verification of \cref{conjecture:minimallygloballyrigid}. It says that if we can obtain a matrix of rank at least $r$ as a linear combination of some matrices $A_1,\ldots,A_k,$ where $k > r$, then we can also obtain a matrix of rank at least $r$ as a linear combination of some $r$-element subset of $\{A_1,\ldots,A_k\}$. We shall use the well-known fact that a multivariate polynomial over $\RR$ (or any infinite field) is the zero polynomial precisely if it is identically zero as a polynomial function (see, e.g., \cite[Chapter IV, Corollary 1.6]{lang_2005}).

\begin{lemma}\label{lemma:sumrank}
Let $A_1\ldots,A_k \in \mathbb{R}^{n \times n}$ be matrices and $1 \leq r < k$ a number. Suppose that there are scalars $t_1, \ldots, t_k \in \mathbb{R}$ such that $\sum_{i=1}^k{t_iA_i}$ has rank at least $r$. Then there is a subset $I \subseteq \{1,\ldots,k\}$ of size at most $r$ and scalars $t'_i,i \in I$ such that $\sum_{i \in I}t'_iA_i$ has rank at least $r$.
\end{lemma}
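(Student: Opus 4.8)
The plan is to pass from the single combination $\sum_i t_i A_i$ to a generic symbolic combination and then exploit the homogeneity of determinants. I would introduce indeterminates $x_1,\ldots,x_k$ and consider the matrix $A(\bm{x}) = \sum_{i=1}^k x_i A_i$, whose entries are linear forms in the $x_i$. Since $\sum_i t_i A_i = A(\bm{t})$ has rank at least $r$, some $r\times r$ submatrix of $A(\bm{t})$ is nonsingular; fixing the corresponding row-index set $R$ and column-index set $C$ (each of size $r$), the minor $f(\bm{x}) := \det A(\bm{x})[R,C]$ is a polynomial in $x_1,\ldots,x_k$ that does not vanish at $\bm{t}$, and hence is not the zero polynomial.

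The key observation is that $f$ is homogeneous of degree exactly $r$: every entry of $A(\bm{x})[R,C]$ is a linear form in the $x_i$, and the determinant of an $r\times r$ matrix is a sum of products of $r$ of its entries, so each monomial of $f$ has total degree $r$ in $x_1,\ldots,x_k$. Consequently every monomial of $f$ involves at most $r$ distinct variables. Since $f$ is nonzero, it has at least one monomial with nonzero coefficient; let $I\subseteq\{1,\ldots,k\}$ be the set of variables appearing in this monomial, so that $|I|\leq r$ automatically.

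To finish, I would restrict $f$ to the coordinate subspace where $x_i = 0$ for all $i\notin I$. Substituting zero for these variables annihilates exactly those monomials involving some variable outside $I$, but the chosen monomial survives, so the restricted polynomial in the variables $\{x_i\}_{i\in I}$ is again not the zero polynomial. Because $\RR$ is infinite, a nonzero polynomial cannot vanish identically as a function, so there exist real values $t'_i$, $i\in I$, for which this restriction is nonzero; equivalently, extending by $t'_i=0$ for $i\notin I$ yields $f(\bm{t}')\neq 0$. This says precisely that the $r\times r$ submatrix of $\sum_{i\in I} t'_i A_i$ indexed by $R$ and $C$ is nonsingular, so $\sum_{i\in I} t'_i A_i$ has rank at least $r$, as required.

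I expect essentially no serious obstacle once the homogeneity of the minor is noticed: the entire argument hinges on that degree count, which forces every monomial to use at most $r$ variables and thereby hands us the small index set $I$ for free. The one point that must be stated with care is the distinction between a polynomial and the polynomial function it induces, which is exactly why we invoke that $\RR$ is infinite when extracting the real scalars $t'_i$.
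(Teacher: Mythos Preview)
Your argument is correct and essentially identical to the paper's proof: both pass to an $r\times r$ minor of the symbolic combination $\sum_i x_i A_i$, observe that its determinant is a nonzero polynomial of degree $r$ so that every monomial uses at most $r$ variables, and then specialize to the variables in one monomial using that $\RR$ is infinite. The only cosmetic difference is that the paper first replaces each $A_i$ by the chosen $r\times r$ submatrix (reducing to $r=n$) before taking the determinant, whereas you keep the full matrices and work with the minor directly; your phrasing in terms of homogeneity is slightly sharper than the paper's ``degree at most $r$'', but both yield the same conclusion.
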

\begin{proof}
From the assumption it follows that $\sum_{i=1}^k t_iA_i$ has a nonsingular $r \times r$ submatrix. By replacing each $A_i$ with the corresponding submatrix we may suppose that the sum is nonsingular, so that $r = n$.

Consider the function $f : \mathbb{R}^k \rightarrow \RR$ defined by
\begin{equation*}
    (x_1,\ldots,x_k) \mapsto \det\bigl(\sum_{i=1}^k x_iA_i\bigr).
\end{equation*}
This is a polynomial function in $x_1,\ldots,x_k$, so we may consider $f$ as an element of $\mathbb{R}[x_1,\ldots,x_k]$. 
Since $\sum_{i=1}^k{t_iA_i}$ is nonsingular, $f(t_1,\ldots,t_k) \neq 0$, so in particular $f$ is not identically zero. Moreover, it has degree at most $r$. Consider any monomial $f_0$ of $f$ and let $x_i, i \in I$ denote the variables appearing in it. From the degree of $f$ we have that $|I| \leq r$. Let $g$ be the polynomial obtained from $f$ by setting the variables $x_i, i \notin I$ to zero. Then $g$ is nonzero, since it still contains the monomial $f_0$. It follows that the value of $g$ as a polynomial function is nonzero at some scalars $t'_i,i \in I$. This means precisely that $\sum_{i \in I}t'_iA_i$ is nonsingular, as required.
\end{proof}

If $r=k$ and $f$ contains a monomial other than $x_1 \cdots x_k$,  then the above argument also guarantees that we can find a suitable set $I$ with $|I| < r$. This observation implies the following corollary for the $r = k = n$ case.

\begin{corollary}\label{corollary:tight}
Let $A_1,\ldots,A_k \in \RR^{k \times k}$ be matrices and suppose that there are scalars $t_i,\ldots,t_k \in \RR$ such that $\sum_{i=1}^k t_iA_i$ is nonsingular. Then either there is a proper subset $I \subsetneq \{1,\ldots,k\}$ and scalars $t'_i,i \in I$ such that $\sum_{i \in I} t'_iA_i$ is nonsingular, or the polynomial $\det(\sum_{i=1}^kx_iA_i) \in \RR[x_1,\ldots,x_k]$ is equal to the monomial  $\alpha x_1\cdots x_k$ for some $\alpha \in \RR$. \qed
\end{corollary}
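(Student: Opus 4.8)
The plan is to derive \cref{corollary:tight} directly from the proof of \cref{lemma:sumrank}, specialized to the case $r=k=n$, by carrying out a careful case analysis on the polynomial $f = \det(\sum_{i=1}^k x_i A_i) \in \RR[x_1,\ldots,x_k]$. First I would observe that since $\sum_{i=1}^k t_i A_i$ is nonsingular, the polynomial $f$ is not identically zero, and since each entry of $\sum_i x_i A_i$ is a homogeneous linear form in the $x_i$, the determinant $f$ is homogeneous of degree exactly $k = n$. Thus every monomial of $f$ has total degree $k$.

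The key dichotomy is whether $f$ contains a monomial \emph{other} than $x_1 x_2 \cdots x_k$. Suppose $f$ does contain such a monomial $f_0$; I would apply the argument from the proof of \cref{lemma:sumrank} to this $f_0$. Writing $I$ for the set of indices of variables appearing in $f_0$, the homogeneity of degree $k$ forces $\sum_{i \in I} (\text{multiplicity of } x_i) = k$. Since $f_0 \neq x_1 \cdots x_k$, at least one variable must appear with multiplicity at least two, and hence at least one variable of $\{x_1,\ldots,x_k\}$ is \emph{missing} from $f_0$; that is, $|I| < k$, so $I$ is a proper subset. As in \cref{lemma:sumrank}, setting the variables $x_i$ for $i \notin I$ to zero yields a nonzero polynomial $g$ (it retains the monomial $f_0$), whose nonvanishing at some point $(t'_i)_{i \in I}$ exactly says that $\sum_{i \in I} t'_i A_i$ is nonsingular. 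This gives the first alternative of the corollary.

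The remaining case is that $x_1 \cdots x_k$ is the \emph{only} monomial occurring in $f$, which means $f = \alpha\, x_1 \cdots x_k$ for the scalar $\alpha$ equal to its coefficient; this is precisely the second alternative. I therefore expect the entire proof to be short: the substantive content is already packaged in \cref{lemma:sumrank}, and the corollary amounts to observing that in the boundary case $r=k=n$ the only way the index-reduction can fail is if every monomial of $f$ uses all $k$ variables, which by homogeneity forces the single monomial $x_1 \cdots x_k$.

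The one point deserving care --- and the likely main obstacle --- is justifying that any monomial other than $x_1 \cdots x_k$ must omit some variable. This is exactly where homogeneity of degree $k$ is essential: without it one could imagine a degree-$k$ monomial involving all $k$ variables yet differing from $x_1 \cdots x_k$ only through the distribution of exponents, but homogeneity pins the total degree at $k$, so if all $k$ distinct variables appear then each must appear exactly once, forcing the monomial to be $x_1 \cdots x_k$. Hence any deviating monomial necessarily drops a variable, guaranteeing $|I| < k$ and delivering a \emph{proper} subset $I$, as the corollary requires.
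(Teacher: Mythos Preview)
Your proposal is correct and matches the paper's approach exactly: the paper derives the corollary in a single sentence preceding its statement, observing that in the $r=k=n$ case of \cref{lemma:sumrank}, any monomial of $f$ other than $x_1\cdots x_k$ necessarily omits a variable (since $\deg f \le k$), so the same argument yields a \emph{proper} subset $I$. Your emphasis on homogeneity is fine but not strictly needed---the bound $\deg f \le k$ already forces any monomial using all $k$ variables to equal $x_1\cdots x_k$.
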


We note that the following ``discrete'' version of \cref{lemma:sumrank} seems to be open. Given a collection $A_1\ldots,A_k \in \mathbb{R}^{n \times n}$ of matrices such that $\sum_{i = 1}^k A_i$ has rank $r$, is it true that there is a subset $I \subseteq \{1,\ldots,k\}$ of size at most $r$ such that $\sum_{i \in I}A_i$ has rank at least $r$? The Cauchy-Binet formula can be used to give a quick proof for the case when each $A_i$ has rank one, see \cite{rankone}.

The upper bound on the number of edges in a minimally globally rigid graph follows easily from \cref{lemma:sumrank} and the characterization of global rigidity based on stress matrices.

\begin{theorem}\label{theorem:minimallygloballyrigid}
Let $G = (V,E)$ be a minimally globally rigid graph in $\RR^d$ on at least $d+2$ vertices. Then $|E| \leq (d+1)|V| - \binom{d+2}{2}$. 
\end{theorem}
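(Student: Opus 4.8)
The plan is to use the stress matrix characterization of global rigidity (\cref{theorem:stressmatrix}) together with the rank lemma (\cref{lemma:sumrank}) to bound the number of edges. The key observation is that the space of stresses of a generic framework $(G,p)$ has dimension exactly $|E| - r_d(G)$, and each stress $\omega$ gives rise to a stress matrix $\Omega \in \RR^{V \times V}$ in a linear fashion: the map $\omega \mapsto \Omega$ is linear, so if we pick a basis $\omega^{(1)}, \ldots, \omega^{(k)}$ of the stress space (with $k = |E| - r_d(G)$), then every stress matrix is a linear combination $\sum_i t_i \Omega^{(i)}$ of the corresponding matrices $\Omega^{(1)}, \ldots, \Omega^{(k)}$.

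First I would set $r = |V| - d - 1$ and apply \cref{theorem:stressmatrix}: since $G$ is globally rigid on $|V| \geq d+2$ vertices, a generic realization has a stress matrix of rank $|V| - d - 1 = r$, which is a linear combination of the basis matrices $\Omega^{(1)}, \ldots, \Omega^{(k)}$. Now \cref{lemma:sumrank} applies (assuming $k > r$, which I address below): there is a subset $I$ of size at most $r$ and scalars $t'_i$ such that $\sum_{i \in I} t'_i \Omega^{(i)}$ still has rank at least $r$. The resulting stress $\omega' = \sum_{i \in I} t'_i \omega^{(i)}$ therefore also gives a stress matrix of rank $\geq r$ (hence exactly $r$, by the upper bound on stress matrix rank), and crucially $\omega'$ is a linear combination of only $|I| \leq r$ basis stresses.

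The next step is to translate the bound $|I| \leq r$ into an edge bound via minimality. Since $G$ is \emph{minimally} globally rigid, deleting any edge $e$ destroys global rigidity, so by \cref{theorem:stressmatrix} the graph $G - e$ has no stress matrix of rank $|V| - d - 1$. This should force every stress matrix of maximal rank to be supported, in a suitable sense, on a ``large'' edge set — more precisely, the stress $\omega'$ found above cannot vanish on any edge whose deletion would preserve a full-rank stress matrix. Choosing the basis of the stress space to consist of the fundamental-circuit stresses $\omega_e$ for $e \in E - E_0$ (with $E_0$ a minimally rigid spanning subgraph, as described in the preliminaries), the support structure of $\omega'$ links the index set $I$ to edges, and combining $|I| \leq r = |V|-d-1$ with $|E_0| = d|V| - \binom{d+1}{2}$ should yield $|E| = |E_0| + k$ together with $k \leq$ something giving the claimed bound $(d+1)|V| - \binom{d+2}{2}$.

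I expect the main obstacle to be exactly this last translation: turning the abstract linear-algebraic conclusion ``$\omega'$ uses at most $r$ basis stresses'' into the combinatorial statement about $|E|$, and in particular justifying why minimality guarantees $k = |E| - r_d(G) \leq r$ rather than merely $|I| \leq r$. The delicate point is that the conclusion of \cref{lemma:sumrank} produces \emph{one} low-support full-rank stress, whereas I need to argue that the \emph{entire} stress space has dimension bounded by $r$; this likely requires applying the lemma together with the minimality hypothesis edge-by-edge, or arguing that if $k > r$ then some edge could be deleted while retaining a rank-$r$ stress matrix, contradicting minimal global rigidity. Verifying that $r_d(G) = d|V| - \binom{d+1}{2}$ (i.e.\ that $G$ is rigid, which follows since globally rigid implies rigid) is routine and lets me rewrite the final inequality $|E| - r_d(G) \leq |V| - d - 1$ in the stated form.
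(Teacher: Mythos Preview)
Your plan is essentially the paper's proof, and the ``obstacle'' you flag at the end is not an obstacle at all: your second proposed resolution (``if $k > r$ then some edge could be deleted while retaining a rank-$r$ stress matrix, contradicting minimal global rigidity'') is exactly the argument, and it goes through immediately. The one point you should make explicit is the support structure of the fundamental-circuit basis. For $e \in E - E_0$, the stress $\omega_e$ is supported on the fundamental $\Rd$-circuit of $e$ with respect to $G_0$, which lies inside $E_0 \cup \{e\}$; in particular $\omega_e(f) = 0$ for every $f \in (E - E_0) \setminus \{e\}$. Hence $\omega' = \sum_{e \in I} t'_e \omega_e$ is supported on $E_0 \cup I$, so $\Omega'$ is already a stress matrix of the spanning subgraph $H = (V, E_0 \cup I)$. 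Since \cref{lemma:sumrank} is applied under the hypothesis $k > r$, you get $|I| \leq r < k = |E - E_0|$, so $H$ is a \emph{proper} spanning subgraph with a stress matrix of rank $|V|-d-1$; by \cref{theorem:stressmatrix} $H$ is globally rigid, contradicting minimality. There is no need to bound the dimension of the whole stress space or to argue edge-by-edge.
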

\begin{proof}
Let $G_0 = (V,E_0)$ be a spanning minimally rigid subgraph of $G$ and let $(G,p)$ be a generic realization of $G$. Then for each $e \in E - E_0$, there is a unique stress $\omega_e$ of $(G,p)$ supported on the fundamental $\Rd$-circuit of $e$ with respect to $G_0$ with $\omega_e = 1$, and these stresses generate the space of stresses of $(G,p)$. Let $\Omega_e$ be the stress matrix corresponding to $\omega_e$.

For a contradiction, suppose that $|E| > (d+1)|V| - \binom{d+2}{2}$. Then 
\begin{equation*}
|E - E_0| > |V| - \binom{d+2}{2} + \binom{d+1}{2} = |V| - d -1. 
\end{equation*}
By \cref{theorem:stressmatrix}, $(G,p)$ has a stress matrix $\Omega$ of rank $|V|-d-1$. We can write $\Omega = \sum_{e \in E - E_0} t_e \Omega_e$ for some scalars $t_e \in \RR, e \in E - E_0$. Now by \cref{lemma:sumrank} there is a set $E' \subseteq E - E_0$ of size $|V| - d - 1$ and scalars $t'_e, e \in E'$ such that $\Omega' = \sum_{e \in E'}{t'_e \Omega_e}$ has rank at least $|V|-d-1$. Since the stress corresponding to $\Omega'$ is supported on $E' + E_0$, we can view $\Omega'$ as a stress matrix of $(H,p)$, where $H = (V,E' + E_0)$. By \cref{theorem:stressmatrix}, this implies that $H$ is a proper globally rigid spanning subgraph of $G$, contradicting the assumption that $G$ was minimally globally rigid.
\end{proof}

The above proof actually yields a slightly stronger conclusion: if $G$ is globally rigid in $\RR^d$ and has more than $(d+1)|V| - \binom{d+2}{2}$ edges, then for any minimally rigid spanning subgraph $G_0$ of $G$ there is a proper globally rigid spanning subgraph of $G$ that contains $G_0$. 

Our next goal is to improve on the upper bound given by \cref{theorem:minimallygloballyrigid} by showing that the
only graph for which equality holds is $K_{d+2}$. In order to prove this, we need some elementary observations from linear algebra.

\begin{lemma}\label{lemma:psdsum}
If $A,B \in \mathbb{R}^{n \times n}$ are symmetric, positive semidefinite matrices, then $\ker(A+B) = \ker(A) \cap \ker(B)$. 
\end{lemma}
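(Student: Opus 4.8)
The plan is to prove both inclusions of the set equality $\ker(A+B) = \ker(A) \cap \ker(B)$ directly, exploiting positive semidefiniteness through the associated quadratic forms. The inclusion $\ker(A) \cap \ker(B) \subseteq \ker(A+B)$ is immediate and requires no hypotheses: if $Ax = 0$ and $Bx = 0$, then $(A+B)x = 0$. So the content is entirely in the reverse inclusion, and this is where positive semidefiniteness must be used.

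For the nontrivial direction, I would take $x \in \ker(A+B)$, so that $(A+B)x = 0$, and aim to conclude $Ax = 0$ and $Bx = 0$ separately. The natural device is to pair the equation with $x$ itself: from $(A+B)x = 0$ we get $x^T(A+B)x = 0$, hence $x^T A x + x^T B x = 0$. Since $A$ and $B$ are positive semidefinite, both $x^T A x \geq 0$ and $x^T B x \geq 0$, and a sum of two nonnegative numbers equal to zero forces each to vanish: $x^T A x = 0$ and $x^T B x = 0$. The remaining step is to upgrade $x^T A x = 0$ to $Ax = 0$. For this I would invoke the standard fact that a symmetric PSD matrix $A$ admits a factorization $A = C^T C$ (for instance via its spectral decomposition, writing $A = Q \Lambda Q^T$ with $\Lambda \geq 0$ and taking $C = \Lambda^{1/2} Q^T$). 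Then $x^T A x = \norm{Cx}^2$, so $x^T A x = 0$ gives $Cx = 0$, whence $Ax = C^T C x = 0$. The same argument applied to $B$ yields $Bx = 0$, so $x \in \ker(A) \cap \ker(B)$, completing the reverse inclusion.

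I do not anticipate a genuine obstacle here, as the statement is elementary; the only point requiring a small amount of care is the passage from the vanishing of the quadratic form $x^T A x$ to the vanishing of $Ax$, which genuinely uses semidefiniteness (it is false for indefinite symmetric matrices). The factorization $A = C^T C$ cleanly encapsulates this and makes the implication transparent. An equivalent route, should one prefer to avoid explicitly invoking a square root, is to diagonalize $A = Q \Lambda Q^T$ and observe that $x^T A x = \sum_i \lambda_i (Q^T x)_i^2 = 0$ with all $\lambda_i \geq 0$ forces $(Q^T x)_i = 0$ for every index $i$ with $\lambda_i > 0$, which is exactly the condition $Ax = 0$. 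Either formulation is short, so I would present the $A = C^T C$ version for brevity and then simply remark that $B$ is handled identically.
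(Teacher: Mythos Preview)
Your proof is correct and is essentially the same as the paper's: both argue that $x^T(A+B)x=0$ forces $x^TAx=x^TBx=0$ by nonnegativity, and both use a factorization $A=C^TC$ to upgrade $x^TAx=0$ to $Ax=0$. The paper packages this as the observation $\ker(M)=\{v:v^TMv=0\}$ for any symmetric PSD $M$, but the content is identical.
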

\begin{proof}
Observe that for any positive semidefinite matrix $M$, we have $\ker(M) = \{v : v^TMv = 0\}$. Indeed, if $Mv = 0$, then $v^TMv = 0$ is immediate, and if $v^TMv = 0$, then taking a decomposition $M = X^TX$ of $M$ we see that $0 = v^TX^TXv = (Xv)^TXv = \norm{Xv}^2$, so that $Xv = 0$, and consequently $Mv=X^TXv = 0$. The statement follows easily from this observation and the fact that $v^TAv$ and $v^TBv$ are nonnegative for every $v \in \RR^n$
\end{proof}

\begin{lemma}\label{lemma:subspaceintersection}
Let $X_1,\ldots,X_k \subseteq \RR^n$ be linear subspaces and let $X = \cap_{i = 1}^k X_i$. Suppose that for every $j = 1,\ldots,k$, $\cap_{i \neq j} X_i \neq X$. Then $n > \dim X_i \geq k - 1 + \dim(X)$ holds for all $1 \leq i \leq k$.
\end{lemma}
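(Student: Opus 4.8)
The plan is to handle the two claimed inequalities separately: the strict bound $\dim X_i < n$ follows from a short redundancy argument, while the lower bound $\dim X_i \geq k-1+\dim(X)$ is obtained by exhibiting $k-1$ vectors inside $X_i$ that, together with a basis of $X$, are linearly independent.

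For the strict inequality, note first that $X \subseteq X_j$ for every $j$, since $X$ is defined as the intersection of the $X_j$. If we had $X_i = \RR^n$ for some $i$, then $X_i$ would impose no constraint in the intersection and we would get $\cap_{j \neq i} X_j = \cap_{j = 1}^k X_j = X$, contradicting the hypothesis. Hence $X_i \neq \RR^n$, i.e.\ $\dim X_i < n$, for every $i$.

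For the lower bound I would fix an index $i$ and, using the hypothesis, choose for each $j \neq i$ a vector $v_j \in \bigl(\cap_{l \neq j} X_l\bigr) \setminus X$; such a vector exists precisely because $\cap_{l \neq j} X_l \neq X$. By construction $v_j \in X_l$ for all $l \neq j$ — in particular $v_j \in X_i$ since $i \neq j$ — whereas $v_j \notin X_j$, because $v_j \in X_j$ would force $v_j \in \cap_{l} X_l = X$, contrary to the choice $v_j \notin X$.

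The core of the argument is to show that the $k-1$ vectors $\{v_j : j \neq i\}$, together with a basis of $X$, form a linearly independent set inside $X_i$; this immediately yields $\dim X_i \geq (k-1) + \dim(X)$. Suppose $\sum_{j \neq i} a_j v_j + w = 0$ with $w \in X$, and fix any $m \neq i$. Every $v_j$ with $j \neq m$ lies in $X_m$, and $w \in X \subseteq X_m$, so rearranging gives $a_m v_m \in X_m$; since $v_m \notin X_m$, this forces $a_m = 0$. As $m \neq i$ was arbitrary, all the $a_m$ vanish, and hence $w = 0$ as well, proving independence. I expect this independence step to be the main obstacle, the rest being routine bookkeeping. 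A clean way to package the whole computation is to first pass to the quotient $\RR^n / X$, which is legitimate because each $X_j \supseteq X$: there the hypotheses become $\cap_{j} (X_j/X) = \{0\}$ and $\cap_{j \neq i}(X_j/X) \neq \{0\}$, and the claim reduces to $\dim(X_i/X) \geq k-1$, i.e.\ to producing $k-1$ independent \emph{nonzero} vectors by the same selection-and-independence argument.
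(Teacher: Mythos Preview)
Your proof is correct. The paper takes a slightly different, more compressed route: rather than selecting explicit witness vectors $v_j$, it argues (for $i=1$, by symmetry) that the hypothesis forces a strictly decreasing chain
\[
\RR^n \supsetneq X_1 \supsetneq X_1 \cap X_2 \supsetneq \cdots \supsetneq X_1 \cap \cdots \cap X_{k-1} \supsetneq X,
\]
because at each step the assumption $\cap_{l\neq j}X_l \neq X$ implies $\cap_{l\neq j}X_l \not\subseteq X_j$, and hence the larger set $\cap_{l<j}X_l$ is not contained in $X_j$ either. Counting the $k-1$ strict drops from $X_1$ down to $X$ (and the one from $\RR^n$ to $X_1$) gives both inequalities at once. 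Your argument is equally valid and more constructive --- the vectors $v_j$ are precisely witnesses to the strict drops in the paper's chain, and your independence computation is the hands-on version of ``each inclusion is strict''. The quotient-by-$X$ repackaging you mention is a nice conceptual cleanup; the paper's flag argument is marginally shorter but conveys the same information.
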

\begin{proof}
We prove for $i = 1$. Notice that for every $j > 1$, $X_j$ is not contained in $\cap_{i < j}X_i$; indeed, otherwise $X = \cap_{i = 1}^k X_i = \cap_{i \neq j} X_i \neq X$, a contradiction. It follows that \begin{equation*}
    \RR^n \supsetneq X_1 \supsetneq X_1 \cap X_2 \supsetneq \ldots \supsetneq X_1 \cap \ldots \cap X_{k-1} \supsetneq X
\end{equation*} 
is a chain of nontrivial linear subspaces, which implies $n > \dim X_1 \geq k - 1 + \dim(X)$, as desired.
\end{proof}

\begin{lemma}\label{lemma:rankone}
Let $(G,p)$ be a realization of $G = (V,E)$ in $\RR^d$ and let $\omega$ be a stress of $(G,p)$ with corresponding stress matrix $\Omega$. Suppose that $\rk(\Omega) = 1$. Then $\supp(\omega) = \{e \in E: \omega(e) \neq 0\}$ induces a complete subgraph of $G$.
\end{lemma}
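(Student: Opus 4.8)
The plan is to exploit the fact that $\Omega$ is a \emph{symmetric} matrix of rank one. Every symmetric rank-one real matrix admits a factorization $\Omega = \lambda z z^T$ for some nonzero scalar $\lambda \in \RR$ and some nonzero vector $z \in \RR^V$. Writing $S = \{u \in V : z_u \neq 0\}$ for the support of $z$, the entries of $\Omega$ become completely transparent: $\Omega_{uv} = \lambda z_u z_v$, so an off-diagonal entry $\Omega_{uv}$ (with $u \neq v$) is nonzero precisely when both $u$ and $v$ lie in $S$. The whole proof is then a translation of this statement back into the language of the graph.

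First I would use the definition of the stress matrix to read off the graph-theoretic meaning of a nonzero entry. For $u \neq v$ we have $\Omega_{uv} = -\omega_{uv}$ if $uv \in E$ and $\Omega_{uv} = 0$ otherwise; in particular, an off-diagonal nonzero entry can only occur on an edge. Combining this with the factorization, for any two \emph{distinct} vertices $u, v \in S$ the entry $\Omega_{uv} = \lambda z_u z_v$ is nonzero, which forces $uv \in E$. Hence every pair of distinct vertices of $S$ is joined by an edge, i.e.\ $S$ induces a complete subgraph of $G$.

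Next I would identify $\supp(\omega)$ with the edge set of $G[S]$. An edge $uv \in E$ lies in $\supp(\omega)$ if and only if $\omega_{uv} \neq 0$, equivalently $\Omega_{uv} = \lambda z_u z_v \neq 0$, which happens exactly when $u, v \in S$. Thus $\supp(\omega)$ is precisely the set of edges of $G$ with both endpoints in $S$, and since $S$ is a clique this is the full edge set of the complete graph $G[S]$. It remains to check that the vertex set spanned by $\supp(\omega)$ is exactly $S$ (and not a proper subset), which needs $|S| \geq 2$. This holds because $\rk(\Omega) = 1$ forces $\Omega \neq 0$, hence $\omega \neq 0$, so $\supp(\omega)$ contains at least one edge $uv$, whose endpoints lie in $S$; every other vertex of $S$ is then joined to one of $u,v$ by a support edge. (Alternatively, each row of $\Omega$ sums to zero, so $\mathbf{1} \in \ker(\Omega)$ and $z^T \mathbf{1} = 0$, which already rules out $|S| = 1$.)

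I do not expect a serious obstacle here: the argument is essentially the observation that a symmetric rank-one matrix is supported on a ``combinatorial square'' $S \times S$, which, for a stress matrix whose off-diagonal entries vanish outside $E$, immediately means that $S$ is a clique. The only point requiring a little care is the bookkeeping that identifies the vertices spanned by $\supp(\omega)$ with $S$, and for this the nonvanishing of $\omega$ (or the row-sum property) suffices.
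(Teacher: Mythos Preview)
Your proof is correct and follows essentially the same approach as the paper: both arguments exploit a rank-one factorization of $\Omega$ to conclude that an off-diagonal zero entry forces an entire row or column to vanish, which translates into the support being a clique. The only cosmetic difference is that you use the symmetric factorization $\Omega = \lambda zz^T$ from the outset, whereas the paper writes $\Omega = xy^T$ and (implicitly) uses symmetry when passing from a zero row to a zero column; your extra bookkeeping identifying the vertex set of $\supp(\omega)$ with $S$ is a bit more explicit than the paper's version but not a genuinely different idea.
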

\begin{proof}
Since $\rk(\Omega) = 1$, there are vectors $x,y \in \RR^V$ such that $\Omega = xy^T$. Now if $uv \notin \supp(\omega)$, then $\Omega(uv) = 0$. It follows that $x_u$ or $y_v$ is zero, implying that the column of $\Omega$ corresponding to $u$ or $v$ is zero. This shows that if two vertices are both induced by $\supp(\omega)$, then they must be connected by an edge in $\supp(\omega)$.  
\end{proof}

Now we are ready to characterize the case of equality in \cref{theorem:minimallygloballyrigid}. Our approach will be similar to the one used in the proof of \cref{theorem:minimallygloballyrigid}. However, whereas in the latter we could use \cref{lemma:sumrank} to immediately derive a contradiction, here we can only rely on the weaker conclusion given by \cref{corollary:tight}, and thus we need to use a more involved argument.  

\begin{theorem}\label{theorem:sharp}
Let $G = (V,E)$ be a minimally globally rigid graph in $\RR^d$ on at least $d+2$ vertices. If $|E| = (d+1)|V| - \binom{d+2}{2}$, then $G$ is a complete graph on $d+2$ vertices.
\end{theorem}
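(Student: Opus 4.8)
The plan is to run the proof of \cref{theorem:minimallygloballyrigid} in the equality case, where the number of fundamental circuit stresses exactly matches the target rank, and then extract rigidity from the \emph{tight} alternative of \cref{corollary:tight}. I would fix a spanning minimally rigid subgraph $G_0 = (V,E_0)$ and, using \cref{theorem:universallyrigid}, a generic realization $(G,p)$ carrying a positive semidefinite stress matrix $\Omega^*$ of rank $k := |V| - d - 1$; in the equality case $|E - E_0| = k$. Writing $\omega_e$ ($e \in E - E_0$) for the fundamental circuit stresses, $\Omega_e$ for the associated stress matrices, and $\Omega^* = \sum_e t^*_e \Omega_e$, I would pass to a nonsingular principal $k \times k$ submatrix of $\Omega^*$ indexed by some $S \subseteq V$, and set $A_e = \Omega_e|_S$, $A^* = \Omega^*|_S$ (the latter positive definite). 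Since every stress matrix annihilates $W := \Span(\mathbf 1, p_1, \dots, p_d) = \ker \Omega^*$ and $A^*$ is nonsingular, one has $\mathbb{R}^V = \mathbb{R}^S \oplus W$, so $\rk(\sum_e c_e \Omega_e) = \rk(\sum_e c_e A_e)$ for all scalars $c_e$. Applying \cref{corollary:tight} to the $k$ matrices $A_e \in \mathbb{R}^{k \times k}$: a nonsingular combination over a proper subset would yield, as in the proof of \cref{theorem:minimallygloballyrigid}, a proper globally rigid spanning subgraph, contradicting minimality. Hence $\det(\sum_e x_e A_e) = \alpha \prod_e x_e$ for some $\alpha \neq 0$.

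The heart of the argument will be to upgrade this monomial identity to the statement that every $\Omega_e$ has rank one. After a congruence making $A^* = I$ and rescaling, I may assume $\sum_e \tilde A_e = I$ and $\det(\sum_e z_e \tilde A_e) = \prod_e z_e$ with each $\tilde A_e$ symmetric. Because the determinant equals $\prod_e z_e > 0$ on the connected positive orthant and equals $\det I = 1$ at $z = \mathbf 1$, the matrix $M(z) = \sum_e z_e \tilde A_e$ is positive definite for all $z > 0$; letting a single $z_j \to \infty$ and rescaling shows $\tilde A_j = \lim \tfrac1{z_j} M(z) \succeq 0$, so each $\tilde A_e$ is PSD. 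Setting one variable to zero gives $\det(I - \tilde A_j) = 0$, whence $\ker(I - \tilde A_j) \neq 0$; by \cref{lemma:psdsum} this space is exactly $\bigcap_{e \neq j} \ker \tilde A_e$, which therefore strictly contains $\bigcap_e \ker \tilde A_e = \{0\}$. The hypotheses of \cref{lemma:subspaceintersection} then hold for the subspaces $\ker \tilde A_e \subseteq \mathbb{R}^k$, forcing $\dim \ker \tilde A_e = k - 1$, i.e.\ $\rk \tilde A_e = 1$. Transferring back through the congruence and the splitting $\mathbb{R}^V = \mathbb{R}^S \oplus W$ gives $\rk \Omega_e = 1$ for every $e$.

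Once rank one is established I would argue combinatorially. A rank-one $\Omega_e = \lambda_e v_e v_e^T$ satisfies $\supp(\omega_e) = E \cap \binom{T_e}{2}$ with $T_e = \supp(v_e)$, so the fundamental circuit $C_e$ equals the induced subgraph $G[T_e]$; by \cref{lemma:rankone} this induced subgraph is complete, and a complete graph that is an $\mathcal{R}_d$-circuit must be $K_{d+2}$. Thus each $C_e$ is a $K_{d+2}$ on a set $V_e$ with $|V_e| = d+2$, and since $G$ is $\mathcal{R}_d$-connected (\cref{theorem:Mconnected}) it has no coloops, so every edge lies in some $C_e$ and $G = \bigcup_e G[V_e]$ is a union of $k$ cliques. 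The monomial identity now gives a decisive dichotomy: $\sum_e c_e \Omega_e$ has rank $k$ iff all $c_e \neq 0$. If some edge $f$ lies in two cliques, the single condition $\sum_e c_e \omega_e(f) = 0$ admits a solution with all $c_e \neq 0$, producing a rank-$k$ stress matrix of $(G-f,p)$ and hence, via \cref{theorem:stressmatrix}, a proper globally rigid spanning subgraph, contradicting minimality. Otherwise the cliques are edge-disjoint, $|E| = k\binom{d+2}{2}$, and equating this with $(d+1)|V| - \binom{d+2}{2}$ and $|V| = k + d + 1$ forces $k = 1$. Either way $k = 1$, so $G = G[V_{e_1}] = K_{d+2}$.

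The main obstacle will be the middle step: passing from the bare monomial identity of \cref{corollary:tight} to rank-one stress matrices. The monomial condition alone does \emph{not} force rank one (symmetric pencils with monomial determinant but higher-rank summands exist, e.g.\ the off-diagonal $3 \times 3$ matrices), and it is precisely the positive semidefiniteness supplied by \cref{theorem:universallyrigid}, exploited through the positive-orthant limit and \cref{lemma:psdsum,lemma:subspaceintersection}, that excludes them. I also expect the sign and normalization bookkeeping in the congruence step, together with the verification that $\supp(\omega_e)$ is genuinely an induced subgraph, to require some care.
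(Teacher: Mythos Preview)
Your proof is correct. The core linear-algebraic reduction---using \cref{theorem:universallyrigid} to obtain a PSD stress matrix, invoking the monomial alternative of \cref{corollary:tight}, propagating positive semidefiniteness to each $\Omega_e$ via a continuity-of-signature argument on the positive orthant, and then applying \cref{lemma:psdsum,lemma:subspaceintersection} to force $\rk\Omega_e=1$---matches the paper's proof essentially step for step, with only cosmetic differences (you work with a \emph{principal} submatrix and the splitting $\mathbb{R}^V=\mathbb{R}^S\oplus W$, whereas the paper takes an arbitrary nonsingular submatrix and argues PSD via continuity of the roots of the characteristic polynomial).

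Where you genuinely diverge is the combinatorial endgame. The paper notes that \emph{any} $\Rd$-circuit of $G$ can be realised as a fundamental circuit for a suitable spanning basis $G_0$, so every circuit is a $K_{d+2}$; then $\Rd$-connectivity (\cref{theorem:Mconnected}) puts every pair of edges into a common clique, forcing $G$ itself to be complete. You instead stay with a single $G_0$: the no-coloop argument covers $E$ by the $k$ fundamental cliques, and your dichotomy---either two cliques share an edge $f$ (whence a nowhere-zero combination $\sum c_e\omega_e$ with $\sum c_e\omega_e(f)=0$ gives a rank-$k$ stress matrix for $G-f$, contradicting minimality via \cref{theorem:stressmatrix}) or they are edge-disjoint (whence $|E|=k\binom{d+2}{2}$ together with the hypothesis forces $k=1$)---finishes directly. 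Your route avoids varying $G_0$ at the cost of one further appeal to the monomial identity; the paper's route yields the extra structural fact that all $\Rd$-circuits of $G$ are complete before concluding.
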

\begin{proof}
For convenience, we let $k$ denote $|V| - d - 1$. By \cref{theorem:universallyrigid}, there is a generic realization $(G,p)$ of $G$ in $\RR^d$ which has a positive semidefinite stress matrix $\Omega$ of rank $k$. Let $\Omega'$ denote a $k \times k$ nonsingular submatrix of $\Omega$.
Consider a spanning minimally rigid subgraph $G_0 = (V,E_0)$ of $G$ and let $\Omega_1,\ldots,\Omega_k$ be the stress matrices of $(G,p)$ corresponding to the fundamental $\Rd$-circuits in $G$ with respect to $G_0$. Finally, let $\Omega'_i$ denote the submatrix of $\Omega_i$ corresponding to $\Omega'$. 

Since $\Omega_1,\ldots,\Omega_k$ generate the space of stress matrices of $(G,p)$, we have $\Omega = \sum_{i=1}^k t_i \Omega_i$ for some scalars $t_i \in \RR, i = 1,\ldots,k$. From the assumption that $G$ is minimally globally rigid, we must have $t_i \neq 0$ for all $i$. By possibly negating $\Omega_i$, we may suppose that $t_i > 0$ holds for each $i$. Thus $\Omega$ arises as a linear combination of $\Omega_i, i = 1,\ldots,k$ with positive coefficients.

Consider $f : \mathbb{R}^k \rightarrow \RR$ defined by
\begin{equation*}
    (x_1,\ldots,x_k) \mapsto \det(\sum_{i=1}^k x_i\Omega'_i).
\end{equation*}
From the assumption that $G$ is minimally globally rigid and \cref{corollary:tight} we have that $f(x_1,\ldots,x_k) = \alpha x_1x_2 \cdots x_k$ for some $\alpha \in \RR$. In particular, every linear combination of $\Omega_i, i = 1,\ldots,k$ with positive coefficients has rank $k$.

We first show that for every $t'_1,\ldots,t'_k > 0$, $\Psi = \sum_{i=1}^k t'_i \Omega_i$ is positive semidefinite.
Assume to the contrary that $\Psi$ has a negative eigenvalue $\lambda$ and consider the matrices $M_t = t\Omega + (1-t)\Psi$ for $0 \leq t \leq 1$. Let $f_t$ denote the characteristic polynomial of $M_t$. Since $(G,p)$ is a generic framework on at least $d+2$ vertices, it is affinely spanning, and consequently the stress matrix $M_t$ has a kernel of dimension at least $d+1$, for all $t$. It follows that $f_t(x) = x^{d+1}g_t(x)$ for some polynomial $g_t$. Also, since $M_t$ is symmetric, every root of $g_t$ is real. Now every root of $g_0$ is positive, while $g_1$ has $\lambda < 0$ as a root. Since the roots of a polynomial depend continuously on the coefficients, and the coefficients of $g_t$ are continuous in $t$, there must be some $0 \leq t_0 \leq 1$ such that $g_{t_0}$ has $0$ as a root. This implies that the kernel of $M_{t_0}$ is at least $d+2$-dimensional, so the rank of $M_{t_0}$ is at most $k - 1$. But this contradicts our observation that any linear combination of $\Omega_i, i = 1,\ldots,k$ with positive coefficients has rank $k$. Thus $\Psi$ is positive semidefinite, as claimed.

It follows that for every vector $v \in \RR^V$ and every $1 \leq i \leq k$, we have
\begin{equation*}
    v^T \Omega_i v = \lim_{t \rightarrow 0^+} v^T(\Omega_i + t\sum_{j \neq i}\Omega_j)v \geq 0,
\end{equation*}
since the matrix on the right side is positive semidefinite. This shows that $\Omega_i$ is positive semidefinite, for $i = 1,\ldots,k$.

Let $X_i = \ker \Omega_i$ for $i = 1,\ldots,k$. From \cref{lemma:psdsum} we have that $\cap_{i = 1}^k X_i = \ker(\Omega)$, and the fact that $G$ is minimally globally rigid implies that $\cap_{i \neq j} X_i \neq \ker(\Omega)$ for every $1 \leq j \leq k$. From \cref{lemma:subspaceintersection} we get that 
\begin{equation*}
|V| > \dim X_i \geq k - 1 + \dim(\ker(\Omega)) = |V| - 1, 
\end{equation*}
so $\rk(\Omega_i) = 1$. By \cref{lemma:rankone}, this means that the stress corresponding to $\Omega_i$ is supported on a complete graph. Since this stress was the nowhere-zero stress of a fundamental $\Rd$-circuit, and $K_{d+2}$ is the only complete $\Rd$-circuit, we obtain that each fundamental circuit corresponding to $G_0$ is a copy of $K_{d+2}$.

Recall that $G_0$ was an arbitrary spanning minimally rigid subgraph of $G$. It follows that every $\Rd$-circuit of $G$ is complete. Indeed, if $C$ is the edge set of an $\Rd$-circuit of $G$ and $e$ an edge in $C$, then $C-e$ spans an $\Rd$-independent graph, so we can extend it to a minimally rigid spanning subgraph of $G$. In this subgraph, the fundamental circuit corresponding to $e$ is $C$. By the previous arguments, this implies that $C$ is complete.

By \cref{theorem:Mconnected}, $G$ is $\Rd$-connected, so every pair of edges is contained in an $\Rd$-circuit of $G$. This means that every pair of edges is contained in a complete subgraph of $G$. But a graph with this property and without isolated vertices is necessarily complete: if $u,v \in V$ is a pair of vertices and $e,f$ are edges incident to $u$ and $v$, respectively, then the existence of a complete subgraph containing $e$ and $f$ shows that $uv \in E$.

We conclude that $G$ is a minimally globally rigid complete graph. It is well-known (and also follows from \cref{theorem:minimallygloballyrigid}) that this implies $|V| \leq d+2$. Since $G$ has at least $d+2$ vertices, we must have $G = K_{d+2}$, as desired.
\end{proof}

\cref{theorem:sharp} shows that the upper bound given by \cref{theorem:minimallygloballyrigid} on the number of edges in a minimally globally rigid graph is never tight for $|V| \geq d+3$. On the other hand, it is not far from being tight for all $d$ and $|V|$, as shown by the following example.\footnote{This was already noted in \cite{handbook}.}
Consider the bipartite graph $K_{d+1,n-d-1}$ 
with 
$n \geq \binom{d+2}{2} + 1$. This graph is globally rigid in $\RR^d$ by \cref{theorem:bipartite}, and moreover it is minimally globally rigid, since every edge is incident to a vertex of degree $d+1$.
A straightforward computation shows that this graph has \[n(d+1) - (d+1)^2 = n(d+1) - \binom{d+2}{2} - \binom{d+1}{2}\] edges. Thus, the gap between this example and the upper bound given by \cref{theorem:minimallygloballyrigid} is $\binom{d+1}{2}$.

As we already noted in the Introduction, \cref{theorem:minimallygloballyrigid} has the following corollary concerning the minimum degree in minimally globally rigid graphs.

\begin{corollary}
Let $G$ be minimally globally rigid in $\RR^{d}$ with minimum degree $\delta(G)$. Then we have
\[
\pushQED{\qed} 
d+1\leq \delta(G) \leq 2d+1. \qedhere
\popQED\]
\end{corollary}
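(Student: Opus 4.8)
The plan is to prove the two inequalities separately, each of which follows quickly from a result established earlier; throughout I work under the standing assumption of the section, namely that $G = (V,E)$ has $n = |V| \geq d+2$ vertices.

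For the lower bound $d+1 \leq \delta(G)$, I would invoke the connectivity half of Hendrickson's necessary conditions. Since $G$ is globally rigid in $\RR^d$ on $n \geq d+2$ vertices, \cref{theorem:hendrickson} guarantees that $G$ is $(d+1)$-connected. A $(d+1)$-connected graph cannot contain a vertex $v$ of degree at most $d$, since deleting the at most $d$ neighbours of $v$ would then disconnect $v$ from the remaining vertices; hence $\delta(G) \geq d+1$.

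For the upper bound $\delta(G) \leq 2d+1$, I would argue by contradiction using the edge bound of \cref{theorem:minimallygloballyrigid}. Suppose instead that $\delta(G) \geq 2d+2 = 2(d+1)$. Summing degrees over all vertices then gives
\[
2|E| = \sum_{v \in V} \deg(v) \geq (2d+2)\,|V| = 2(d+1)|V|,
\]
so that $|E| \geq (d+1)|V|$. On the other hand, \cref{theorem:minimallygloballyrigid} yields $|E| \leq (d+1)|V| - \binom{d+2}{2}$. Since $\binom{d+2}{2} > 0$ for every positive integer $d$, these two inequalities are incompatible, a contradiction. Therefore $\delta(G) \leq 2d+1$.

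I expect no genuine obstacle here: the lower bound is immediate from the $(d+1)$-connectivity supplied by \cref{theorem:hendrickson}, and the upper bound is exactly the ``simple counting argument'' promised in the Introduction, driven by the strict positivity of $\binom{d+2}{2}$. The only point deserving care is the standing hypothesis $n \geq d+2$, which is needed both for \cref{theorem:hendrickson} to apply and for the degree counting to be meaningful; without it the lower bound can fail, as the complete graph $K_{d+1}$ is minimally globally rigid in $\RR^d$ yet has minimum degree $d$.
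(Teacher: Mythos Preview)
Your proof is correct and matches the paper's intended argument: the lower bound comes from Hendrickson's $(d+1)$-connectivity condition (\cref{theorem:hendrickson}), and the upper bound is the ``simple counting argument'' from the edge bound of \cref{theorem:minimallygloballyrigid}, exactly as the paper indicates. Your observation about needing $|V|\geq d+2$ is apt; the paper leaves this implicit from the surrounding context.
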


It is known that the tight upper bound in $d=1,2$ dimensions is, in fact, $d+1$.
For $d\geq 3$ we can give examples of minimally globally rigid graphs $G$ in $\RR^d$ with $\delta(G)=d+2$ as follows.
In the $d=3$ case, consider a graph $G=(V,E)$ obtained from a 5-connected triangulation
by adding an edge (see \cref{figure:icosahedron}). It is a globally rigid in $\R^3$
by \cite[Theorem 7.1]{JT} and has $\delta(G)=5$.
Since it has $3|V|-5$ edges, $G-e$ is not redundantly rigid for any edge $e$, so $G$ is minimally globally rigid by \cref{theorem:hendrickson}.  
Examples in $d\geq 4$ dimensions can be obtained from $G$ by repeatedly applying the coning operation and using \cref{theorem:globallyrigidconing}.
It is an interesting research problem to find a best possible upper bound on the minimum
degree for $d\geq 3$. We believe that $2d+1$ is not tight.

\begin{figure}[t]
        \centering
        \begin{tikzpicture}[x = 1cm, y = 1cm, scale = 1]
            \tikzset{every node/.style={circle,draw=black, fill=vertexblack,minimum size=6pt,inner sep=0pt}}

            \node (left) at (-2,0) {};
            \node (right) at (2,0) {};
            \node (top) at (0,3.3) {};

            \node (middlebottom) at (0,0.4) {};
            \node (middle) at (0,.9) {};
            \node (middletop) at (0,1.7) {};
 
            \node (insideleft) at (-.3,1.3) {};
            \node (insideright) at (.3,1.3) {};

            \node (lefttop) at (-.7,1.5) {};
            \node (righttop) at (.7,1.5) {};

            \node (leftbottom) at (-.5,.8) {};
            \node (rightbottom) at (.5,.8) {};
            
            \draw [line width=\normaledge,color=edgeblack] (left) -- (right) -- (top) -- (left) -- (lefttop) -- (middletop) -- (righttop) -- (rightbottom) -- (middlebottom) -- (leftbottom) -- (lefttop) -- (insideleft) -- (middletop) -- (insideright) -- (insideleft) -- (middle) -- (insideright) -- (righttop) -- (top) -- (middletop) -- (top) -- (lefttop);
            \draw [line width=\normaledge,color=edgeblack] (insideleft) -- (leftbottom) -- (middle) -- (middlebottom) -- (right) -- (rightbottom) -- (insideright);
            \draw [line width=\normaledge,color=edgeblack] (leftbottom) -- (left) -- (middlebottom);
            \draw [line width=\normaledge,color=edgeblack] (right) -- (righttop);
            \draw [line width=\normaledge,color=edgeblack] (middle) -- (rightbottom);
            
            \draw [line width=\normaledge,color=red,dashed] (top) -- (insideleft);
            
        \end{tikzpicture}
        \caption{The skeleton of the icosahedron with an additional ``bracing'' edge. This graph is minimally globally rigid in $\RR^3$ and has minimum degree $5$.}
        \label{figure:icosahedron}
\end{figure}
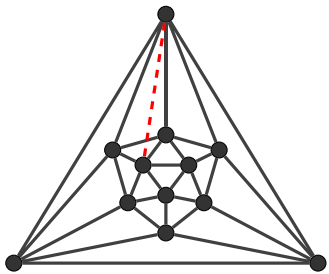

As another corollary to \cref{theorem:minimallygloballyrigid,theorem:sharp}, we have the following counterpart to \cref{theorem:dimensiondropping}.

\begin{corollary}\label{corollary:dimensiondropping}
Let $G$ be a graph. If $G$ is minimally globally rigid in $\RR^d$, then either $G$ is a complete graph on at most $d+2$ vertices, or $G$ is flexible in $\RR^{d+1}$. \qed
\end{corollary}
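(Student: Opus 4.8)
The plan is to prove the statement in contrapositive form, organised as a short case analysis on $|V|$: assuming that $G$ is minimally globally rigid in $\RR^d$ and \emph{not} flexible in $\RR^{d+1}$ (i.e.\ rigid in $\RR^{d+1}$), I would deduce that $G$ is a complete graph on at most $d+2$ vertices. Since being flexible and being rigid in $\RR^{d+1}$ are complementary, this directly yields the claimed dichotomy. The point of splitting at $|V| = d+2$ is that this is exactly the regime in which Gluck's formula (\cref{theorem:gluck}) in dimension $d+1$, as well as \cref{theorem:minimallygloballyrigid,theorem:sharp}, are available.

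The principal case is $|V| \ge d+2$. If $G$ is rigid in $\RR^{d+1}$, then \cref{theorem:gluck} applied in dimension $d+1$ gives $r_{d+1}(G) = (d+1)|V| - \binom{d+2}{2}$. Since the rank of the rigidity matroid never exceeds the number of edges, this forces $|E| \ge (d+1)|V| - \binom{d+2}{2}$. On the other hand, \cref{theorem:minimallygloballyrigid} supplies the reverse inequality $|E| \le (d+1)|V| - \binom{d+2}{2}$. Hence equality holds in the edge bound, and \cref{theorem:sharp} identifies $G$ as $K_{d+2}$, which is a complete graph on $d+2$ vertices; the first alternative holds.

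It remains to handle the small case $|V| \le d+1$, which falls outside the hypotheses of the theorems above. Here I would invoke the elementary fact that a graph on at most $d+1$ vertices is rigid in $\RR^d$ if and only if it is complete. Since a globally rigid graph is rigid, this already forces $G = K_{|V|}$: for $|V| = d+1$ this follows from Gluck's edge count (rigidity gives $r_d(G) = \binom{d+1}{2} \le |E| \le \binom{d+1}{2}$, so $G$ is complete), while for $|V| \le d$ a missing edge $uv$ would admit a continuous flex of $v$ keeping all its incident edge lengths fixed, as the locus of admissible positions for $v$ has dimension $d - |N(v)| \ge d - (|V|-2) \ge 2$. In either situation $G$ is complete on at most $d+1 \le d+2$ vertices, so the first alternative again holds.

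The only genuinely non-immediate ingredient is this last observation for $|V| \le d+1$; everything else is a direct combination of Gluck's rank formula with the edge bound of \cref{theorem:minimallygloballyrigid} and its equality case \cref{theorem:sharp}. I therefore expect the main (and only mild) obstacle to be the clean justification of the $|V| \le d+1$ case, for which the most economical route is to pass through rigidity rather than reasoning about global rigidity directly.
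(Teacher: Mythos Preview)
Your argument is correct and is exactly what the paper has in mind: the corollary is stated with a bare \qed\ as an immediate consequence of \cref{theorem:minimallygloballyrigid,theorem:sharp}, and your write-up simply spells out the intended deduction via Gluck's rank formula in dimension $d+1$. The handling of the small case $|V|\le d+1$ is also fine; the fact that a (globally) rigid graph on at most $d+1$ vertices must be complete is standard, and your justification is adequate.
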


\section{Globally linked pairs and redundantly \texorpdfstring{$\Rd$}{Rd}-connected graphs}\label{section:globallylinked}

\cref{theorem:dimensiondropping} and \cref{corollary:dimensiondropping} suggest that there is an interplay between global rigidity in $\RR^d$ and rigidity in $\RR^{d+1}$ that we feel is not yet fully understood.
In this section we consider the following conjecture, which would somewhat clarify this connection; in particular, it would imply both \cref{theorem:dimensiondropping} and a strengthening of \cref{theorem:minimallygloballyrigid}.

\begin{conjecture}\label{conjecture:globallylinked}
Let $G = (V,E)$ be a graph and suppose that $\{u,v\}$ is linked in $G$ in $\RR^{d+1}$. Then $\{u,v\}$ is globally linked in $G$ in $\RR^d$.
\end{conjecture}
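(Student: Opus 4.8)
The plan is to reduce the conjecture, via the rigidity matroid, to a purely combinatorial statement about a single circuit, and then invoke the available characterizations of globally linked pairs in low dimensions. If $uv \in E$ then $\{u,v\}$ is trivially globally linked, so assume $uv \notin E$. Since $\{u,v\}$ is linked in $G$ in $\RR^{d+1}$, the edge $uv$ lies in some $\mathcal{R}_{d+1}$-circuit $C$ of $G+uv$, and $H := C - uv$ is a subgraph of $G$ containing $u$ and $v$. First I would record the \emph{monotonicity} of global linkedness: if $\{u,v\}$ is globally linked in a subgraph $H \subseteq G$ with $u,v \in V(H)$, then it is globally linked in $G$, since restricting a generic realization of $G$ to $V(H)$ yields a generic realization of $H$ and carries equivalent realizations of $G$ to equivalent realizations of $H$. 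Thus it suffices to prove that $\{u,v\}$ is globally linked in $H$ in $\RR^d$.

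The combinatorial core I would aim for is: $H$ (or a suitable subgraph of it containing $u$ and $v$) is $\mathcal{R}_d$-connected and satisfies $\kappa(u,v;H) \geq d+1$. For the connectivity bound I would use that an $\mathcal{R}_{d+1}$-circuit is ``locally $(d+1)$-connected'' across each of its edges, i.e.\ $\kappa(u,v; C-uv) \geq d+1$; for $\mathcal{R}_2$-circuits this is immediate, since $C - uv$ is then minimally rigid in $\RR^2$ and hence $2$-connected, and the general statement should follow from the $\oplus_2$-decomposition theory of rigidity circuits. For the $\mathcal{R}_d$-connectivity I would use that every circuit is a connected matroid, so $C$ is $\mathcal{R}_{d+1}$-connected, and then apply \cref{theorem:Mconnecteddimensiondropping} to drop to $\mathcal{R}_d$-connectivity. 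The delicate point is that deleting $uv$ may destroy $\mathcal{R}_d$-connectivity, so one must verify that $u$ and $v$ remain in a common $\mathcal{R}_d$-connected piece of $C - uv$ (equivalently, that $\mathcal{R}_{d+1}$-circuits are ``redundantly $\mathcal{R}_d$-connected'').

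Granting this core, the cases $d \leq 2$ follow from the known characterizations. For $d = 1$, global linkedness in $\RR^1$ of a non-adjacent pair is equivalent to $\kappa(u,v;G) \geq 2$, which is exactly what the core provides; here $H = C - uv$ is minimally rigid in $\RR^2$, so the argument is completely self-contained. For $d = 2$, \cref{theorem:2dimgloballylinked} asserts that in an $\mathcal{R}_2$-connected graph a pair is globally linked in $\RR^2$ precisely when $\kappa(u,v;\cdot) \geq 3$; applying it to the $\mathcal{R}_2$-connected subgraph $H$ furnished by the core (with $\kappa(u,v;H) \geq 3$) gives global linkedness in $H$, hence in $G$ by monotonicity.

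The main obstacle is the passage from the combinatorial core to genuine global linkedness when $d \geq 3$. The core delivers only matroidal and connectivity data, whereas global linkedness is a metric uniqueness statement; for $d \leq 2$ this gap is bridged by the characterizations above, but no analogue is known for $d \geq 3$, where there is no combinatorial description of globally linked pairs at all. To run the argument in general one would presumably have to construct, directly from the circuit $C$, a certificate in the spirit of the stress-matrix criterion (\cref{theorem:stressmatrix}) that pins down $\norm{p(u)-p(v)}$ in every equivalent realization. Producing such a certificate is precisely the content that the low-dimensional characterizations package away, and I expect it to be the crux; accordingly I would present a complete proof for $d \leq 2$ and isolate the combinatorial core as the natural target for a general attack.
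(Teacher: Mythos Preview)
Your outline matches the paper's approach: pass to an $\mathcal{R}_{d+1}$-circuit $C$ of $G+uv$ through $uv$, set $H = C - uv$, establish $\kappa(u,v;H) \geq d+1$ and $\mathcal{R}_d$-connectivity of $H$, and then invoke the low-dimensional characterizations of globally linked pairs together with monotonicity. The connectivity bound you want is exactly \cref{lemma:Mcircuits} applied to an $\mathcal{R}_{d+1}$-circuit (giving $\kappa(u,v;C)\geq d+2$, hence $\kappa(u,v;C-uv)\geq d+1$), so no $\oplus_2$ machinery is needed there.

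The genuine gap is the ``delicate point'' you flag but do not resolve: that $C - uv$ is $\mathcal{R}_2$-connected when $C$ is an $\mathcal{R}_3$-circuit. This is precisely \cref{theorem:2dimredundantlyMconnected} (every $\mathcal{R}_3$-connected graph is redundantly $\mathcal{R}_2$-connected), and its proof is the real content of the $d=2$ case; granting it and then claiming a ``complete proof for $d\leq 2$'' is circular. The paper argues by induction on $|V|$. If the $\mathcal{R}_3$-connected graph is $3$-connected, then by \cref{theorem:2dimgloballyrigid} it is globally rigid in $\RR^2$; \cref{bridgeconjspec} then forces it to be \emph{redundantly} globally rigid (an edge whose removal destroyed global rigidity would be an $\mathcal{R}_3$-bridge, contradicting $\mathcal{R}_3$-connectivity), hence redundantly $\mathcal{R}_2$-connected. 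If instead there is a separating pair, one performs a $2$-separation or cleaving and applies \cref{lemma:Mconnected2sum,lemma:redundantlyMconnected2sum} to induct. Your hunch that $\oplus_2$-decomposition is relevant is correct for the inductive step, but you are missing the $3$-connected base case, which rests on the bridge result \cref{theorem:bridge}\textit{(b)}.

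One minor correction for $d=1$: minimal rigidity in $\RR^2$ does not imply $2$-connectivity (two triangles glued at a vertex is a counterexample), so that step of your reasoning fails; the bound $\kappa(u,v;H) \geq 2$ instead follows directly from $\kappa(u,v;C) \geq 3$ via \cref{lemma:Mcircuits}, and your $d=1$ conclusion survives.
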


Our main result in this section is a positive answer to the $d=2$ case of \cref{conjecture:globallylinked} (see \cref{theorem:2dimconjecture} below). As we shall see, the proof relies on a ``dimension dropping'' result in the spirit of \cref{theorem:dimensiondropping} but concerning $\Rd$-connected graphs. Before giving the details, we consider how \cref{conjecture:globallylinked} relates to some previous results and conjectures.  

To see that \cref{conjecture:globallylinked} implies \cref{theorem:dimensiondropping} note that if a graph $G$ is rigid in $\RR^{d+1}$, then every pair of vertices is linked in $G$ in $\RR^{d+1}$. The conjecture would then imply that every pair of vertices is globally linked in $G$ in $\RR^d$, which is equivalent to $G$ being globally rigid in $\RR^d$. 

It also follows from basic matroid theory that a graph $G$ is $\Rd$-independent if and only if the the pair $\{u,v\}$ is not linked in $G-uv$ in $\RR^d$, for every edge $uv$ of $G$. Thus from \cref{conjecture:globallylinked} it would follow that if a graph is minimally globally rigid in $\RR^d$, then it is $\mathcal{R}_{d+1}$-independent. This would strengthen \cref{theorem:minimallygloballyrigid}: it implies that a minimally globally rigid graph is not only sparse, but ``everywhere sparse''. We note that the tight form of \cref{theorem:minimallygloballyrigid} given by \cref{theorem:sharp} (or its consequence \cref{corollary:dimensiondropping}) does not seem to follow from \cref{conjecture:globallylinked}.

By taking the contrapositive, \cref{conjecture:globallylinked} can be equivalently formulated in the following way: let $G = (V,E)$ be a graph and let $uv \in E$ be an edge. If $\{u,v\}$ is not globally linked in $G-e$, then $e$ is an $\mathcal{R}_{d+1}$-bridge in $G$. Restricting this to globally rigid graphs recovers the following conjecture from \cite{jordan_extremal}.

\begin{conjecture}\cite[Conjecture 13]{jordan_extremal}
\label{bridgeconj}
Let $G$ be globally rigid in $\R^d$
and suppose that $G-e$ is not.
Then $e$ is an $\mathcal{R}_{d+1}$-bridge in $G$.
\end{conjecture}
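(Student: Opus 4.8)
The plan is to deduce \cref{bridgeconj} from \cref{conjecture:globallylinked}, of which it is precisely the restriction to globally rigid graphs. The only point that needs checking is the following gluing observation. Suppose $G$ is globally rigid in $\RR^d$, let $e = uv \in E$, and suppose $G-e$ is \emph{not} globally rigid; I claim that then $\{u,v\}$ is not globally linked in $G-e$ in $\RR^d$. Indeed, assume for contradiction that $\{u,v\}$ were globally linked in $G-e$, and let $(G-e,p)$ be a generic realization. Since genericity does not depend on the edge set, $p$ is also a generic realization of $G$. Given any $(G-e,q)$ equivalent to $(G-e,p)$, global linkedness forces $\norm{p(u)-p(v)} = \norm{q(u)-q(v)}$, so all edge lengths of $G$ agree and $(G,q)$ is equivalent to $(G,p)$; as $G$ is globally rigid and $p$ is generic, $q$ is congruent to $p$. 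Hence $(G-e,p)$ is globally rigid, and since $p$ is generic, so is $G-e$, a contradiction. Now the contrapositive form of \cref{conjecture:globallylinked} gives that $e$ is an $\mathcal{R}_{d+1}$-bridge in $G$, as required. Thus the entire difficulty is moved into \cref{conjecture:globallylinked}, and the reduction itself is routine.

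For $d=1$ I would argue directly. If $uv \in E$ the pair is trivially globally linked, so assume $uv \notin E$. Since $\{u,v\}$ is linked in $G$ in $\RR^2$, there is an $\mathcal{R}_2$-circuit $C \subseteq G + uv$ with $uv \in C$, and $H := C - uv$ is a subgraph of $G$ containing $u$ and $v$. An $\mathcal{R}_2$-circuit is rigid, so $H$ (a circuit with one edge removed) is minimally rigid and hence $2$-connected, giving $\kappa(u,v;H) \geq 2$ by Menger's theorem. As global linkedness is preserved when passing to a supergraph on the same or more vertices, $\kappa(u,v;G) \geq \kappa(u,v;H) \geq 2$ shows that $\{u,v\}$ is globally linked in $G$ in $\RR^1$, by the known one-dimensional characterization.

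For $d=2$ the natural route is to combine \cref{theorem:2dimgloballylinked} with a dimension-dropping argument. Again assuming $uv \notin E$, linkedness of $\{u,v\}$ in $\RR^3$ produces an $\mathcal{R}_3$-circuit $C$ through $uv$; being a circuit, $C$ is $\mathcal{R}_3$-connected, so by \cref{theorem:Mconnecteddimensiondropping} it is $\mathcal{R}_2$-connected. The aim is to locate an $\mathcal{R}_2$-connected subgraph $H \subseteq G$ with $u,v \in V(H)$ and $\kappa(u,v;H) \geq 3$, for then \cref{theorem:2dimgloballylinked} together with the monotonicity above finishes the proof. The hard part will be exactly this: extracting three internally vertex-disjoint $u$--$v$ paths. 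One cannot simply take $H = C - uv$, since an $\mathcal{R}_2$-circuit need not be $3$-connected; for instance, two copies of $K_4$ glued along a deleted edge have a $2$-vertex-cut. What is needed here is a dimension-dropping statement \emph{for $\Rd$-connectivity}, in the spirit of \cref{theorem:dimensiondropping}, ensuring that $\RR^{d+1}$-linkedness of $\{u,v\}$ descends to the combined hypothesis ($\Rd$-connectivity together with $\kappa(u,v;\cdot)\geq d+1$) that certifies global linkedness in $\RR^d$.

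For $d \geq 3$ I expect these methods to stop working, and this is the fundamental obstacle rather than the reduction above: there is no known analogue of \cref{theorem:2dimgloballylinked}, so the combinatorial characterization of globally linked pairs that drives the $d=2$ argument is simply unavailable in higher dimensions.
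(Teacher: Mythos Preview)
The statement is a conjecture, still open for $d\ge 3$; the paper only establishes the cases $d=1,2$. Your reduction of \cref{bridgeconj} to \cref{conjecture:globallylinked} is correct and is exactly the observation the paper makes just before stating \cref{bridgeconj}. However, the paper does \emph{not} prove the $d=1,2$ cases of \cref{bridgeconj} via this reduction. Instead it uses the much more direct \cref{theorem:bridge}: since global rigidity in $\RR^d$ for $d\le 2$ is equivalent to $(d+1)$-connectivity together with redundant rigidity, if $G-e$ is not globally rigid then one of these two conditions fails, and in either case a short argument (\cref{lemma:Mcircuits} for connectivity, a coning argument for redundant rigidity) shows that $e$ is an $\mathcal{R}_{d+1}$-bridge. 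This avoids globally linked pairs entirely.

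Your route is genuinely different, and for $d=2$ it is incomplete. You correctly identify that the crux is producing an $\mathcal{R}_2$-connected subgraph $H\subseteq G$ containing $u,v$ with $\kappa(u,v;H)\ge 3$, and you correctly note that $C-uv$ need not be $3$-connected. But you stop there. The paper does carry this line through (as \cref{theorem:2dimconjecture}), and the missing ingredient is precisely \cref{theorem:2dimredundantlyMconnected}: every $\mathcal{R}_3$-connected graph is \emph{redundantly} $\mathcal{R}_2$-connected, proved by induction using $2$-sum decompositions. With that in hand, $C-uv$ is $\mathcal{R}_2$-connected, and $\kappa(u,v;C-uv)\ge 3$ comes straight from \cref{lemma:Mcircuits} applied to the $\mathcal{R}_3$-circuit $C$. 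So your plan can be completed, but it costs substantially more than the paper's direct argument for \cref{bridgeconj} itself; what it buys is the stronger conclusion \cref{theorem:2dimconjecture}. For $d=1$ your argument is fine, though the step ``minimally rigid in $\RR^2$ hence $2$-connected'' is asserted without proof; the paper instead reads $\kappa(u,v;C)\ge 3$ directly off \cref{lemma:Mcircuits}.
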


\cref{theorem:bridge} below from \cite{jordan_extremal} implies an affirmative
answer to
\cref{bridgeconj} 
in the special cases $d=1,2$. For completeness, we reproduce the proof. First, we recall a result from \cite{JJdress} that we shall use repeatedly throughout this section.

\begin{lemma}\cite[Lemma 2.5]{JJdress}\label{lemma:Mcircuits}
Let $G = (V ,E)$ be an $\Rd$-circuit. Then $G$ is $2$-connected. Furthermore, if $uv \in E$, then $\kappa(u,v;G) \geq d+1$.
\end{lemma}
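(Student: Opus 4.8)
The plan is to prove the two assertions by different means, since $2$-connectivity is a ``matroid-structural'' statement while the bound $\kappa(u,v;G)\geq d+1$ is genuinely about generic frameworks. Throughout I use that an $\Rd$-circuit has at least $d+2$ vertices (any graph on at most $d+1$ vertices is a subgraph of the rigid simplex $K_{d+1}$, hence $\Rd$-independent), so $2$-connectivity is not vacuous, and that the unique stress of a circuit is everywhere nonzero.

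For $2$-connectivity I would argue as follows. Connectedness is immediate, since the rigidity matrix of a vertex-disconnected graph is block-diagonal, so a minimal dependent set of rows cannot meet two components. To exclude a cut vertex $w$, write $G=G_1\cup G_2$ with $V(G_1)\cap V(G_2)=\{w\}$ and the edges partitioned as $E(G_1)\sqcup E(G_2)$, both nonempty. The key fact is rank-additivity over the cut vertex, $r_d(G)=r_d(G_1)+r_d(G_2)$: this follows from the description $\ker R(G,p)=\{m:\ m|_{V(G_i)}\in\ker R(G_i,p)\ \text{for } i=1,2\}$ together with a fibre-product count over the common value $m_w$, using that each restriction map $\ker R(G_i,p)\to\RR^d$ is onto (translations alone surject). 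Rank-additivity exhibits $\Rd(G)$ as a direct sum $\Rd(G_1)\oplus\Rd(G_2)$, so no circuit uses edges on both sides of $w$, contradicting that the circuit $G$ spans all of $E$.

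For the connectivity bound, suppose $\kappa(u,v;G)\leq d$ for some edge $uv$. As the edge $uv$ is itself one internally disjoint path, $\kappa(u,v;G-uv)\leq d-1$, and since $u,v$ are non-adjacent in $G-uv$, Menger's theorem yields a separator $T\subseteq V\setminus\{u,v\}$ with $|T|\leq d-1$ placing $u,v$ in different components of $(G-uv)-T$; let $A$ be the component of $u$ and $B=V\setminus(A\cup T)\ni v$. The decisive input is that, because $G$ is an $\Rd$-circuit, $G-uv$ is $\Rd$-independent with $r_d(G-uv)=r_d(G)=|E|-1$; hence $uv$ is not an $\Rd$-bridge and $\ker R(G,p)=\ker R(G-uv,p)$, that is, \emph{every} infinitesimal flex of $G-uv$ already satisfies the length constraint of $uv$. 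I would then engineer a flex of $G-uv$ violating this: choose an infinitesimal isometry $\rho(y)=Ay+b$ of $\RR^d$ vanishing on the affine hull of $p(T)$ but with $\rho(p(u))\neq 0$, and set $m_x=\rho(p(x))$ for $x\in A\cup T$ and $m_x=0$ for $x\in B$. Because $T$ separates, every edge of $G-uv$ lies in $A\cup T$ or in $B\cup T$; on the former $m$ agrees with the rigid motion $\rho$, and on the latter $m$ vanishes (note $\rho(p(t))=0$ for $t\in T$), so $m$ is a flex of $G-uv$, hence of $G$. Evaluating the $uv$-constraint gives $(p(u)-p(v))\cdot(m_u-m_v)=(p(u)-p(v))\cdot\rho(p(u))=0$.

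The main obstacle is the construction of $\rho$ and the genericity bookkeeping, and this is exactly where the hypothesis $|T|\leq d-1$ enters, twice. First, the affine hull of the at most $d-1$ generic points $p(T)$ has dimension at most $d-2$, so its pointwise stabiliser in the infinitesimal isometry group is nontrivial and can be chosen to move the generic point $p(u)$, which does not lie in $\mathrm{aff}(p(T))$; this secures $\rho(p(u))\neq 0$. Second, $p(v)$ is generic over the coordinates determining $\rho(p(u))$, so $(p(u)-p(v))\cdot\rho(p(u))$ is a nonzero affine function of $p(v)$ and hence nonvanishing at a generic configuration, contradicting the displayed equality. I would state this last step carefully as a ``nonzero polynomial is not identically zero'' argument in the spirit of the lemma-proofs above; everything else is elementary linear algebra.
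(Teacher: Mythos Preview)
The paper does not prove this lemma; it is quoted from \cite{JJdress} without argument, so there is no in-paper proof to compare against. Your proposal is correct and follows the standard line. The $2$-connectivity part via rank additivity over a cut vertex is exactly the usual argument (equivalently, the stress space of $(G,p)$ decomposes as the direct sum of the stress spaces on the two sides, so a minimal dependent set of rows cannot meet both). For the bound $\kappa(u,v;G)\geq d+1$, your flex construction is the right mechanism and is essentially the proof in \cite{JJdress}: a separator $T$ of size at most $d-1$ has $\mathrm{aff}(p(T))$ of dimension at most $d-2$, so there is a nontrivial infinitesimal isometry fixing $p(T)$ pointwise and moving the generic point $p(u)$; patching it with the zero motion on the $v$-side yields an infinitesimal flex of $G-uv$ that violates the $uv$-constraint, contradicting $r_d(G-uv)=r_d(G)$.

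Two small remarks. First, the final genericity step is cleanest if you fix $\rho$ with coefficients in $\QQ(p(T))$ (take a $\QQ(p(T))$-basis of the solution space of the linear constraints $\rho(p(t))=0$ and pick a basis vector with $\rho(p(u))\neq 0$); then $(p(u)-p(v))\cdot\rho(p(u))=p(u)\cdot b - p(v)\cdot\rho(p(u))$ is a nontrivial affine function of $p(v)$ with coefficients in $\QQ(p(T),p(u))$, hence nonzero at a generic $p$. Second, you use $A$ both for the component of $u$ in $(G-uv)-T$ and for the matrix in $\rho(y)=Ay+b$; rename one of them.
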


\begin{theorem}\cite[Theorem 14]{jordan_extremal}
\label{theorem:bridge}
Let $G$ be a graph and
suppose that for some
edge $e=uv\in E$ one of the following holds:
\begin{enumerate}[label=\textit{(\alph*)}]
    \item $G$ is $(d+1)$-connected, but $G-e$ is not $(d+1)$-connected,
    \item $G$ is redundantly rigid in $\RR^d$, but $G-e$ is not redundantly rigid in $\R^d$.
\end{enumerate}
Then $e$ is an $\mathcal{R}_{d+1}$-bridge in $G$.
\end{theorem}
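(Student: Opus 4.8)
The plan is to prove the equivalent statement that $e = uv$ is contained in no $\mathcal{R}_{d+1}$-circuit of $G$, since by definition this is exactly what it means for $e$ to be an $\mathcal{R}_{d+1}$-bridge. The single tool driving the argument is \cref{lemma:Mcircuits}: if $C$ is an $\mathcal{R}_{d+1}$-circuit with $uv \in E(C)$, then $\kappa(u,v;C) \geq d+2$, and since $C$ is a subgraph of $G$ this forces $\kappa(u,v;G) \geq d+2$. So in each case it suffices to obstruct the existence of such a circuit, and I would organise the proof around the two hypotheses separately.

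For case \textit{(a)} I would argue purely through connectivity. Since $G$ is $(d+1)$-connected but $G-e$ is not, and deleting an edge does not remove vertices, there is a set $S$ with $|S| \leq d$ whose removal disconnects $G-e$. As $|S| \leq d$ and $G$ is $(d+1)$-connected, $G - S$ is still connected, so $e$ must be the edge reconnecting it; hence $u,v \notin S$ and $S$ separates $u$ from $v$ in $G-e$. By Menger's theorem $\kappa(u,v;G) \leq |S| + 1 \leq d+1$, which is incompatible with $\kappa(u,v;G) \geq d+2$. Thus no $\mathcal{R}_{d+1}$-circuit through $e$ exists, and $e$ is an $\mathcal{R}_{d+1}$-bridge.

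Case \textit{(b)} cannot be reduced to connectivity in this way, since the relevant edge need not create a small separation of $\{u,v\}$; I would instead work inside the rigidity matroid and use coning. From the failure of redundant rigidity, choose an edge $f$ with $G-f$ rigid but $G-e-f$ flexible. A rank computation via \cref{theorem:gluck} gives $r_d(G-e-f) = r_d(G)-1$ while $r_d(G-e) = r_d(G-f) = r_d(G)$, so that $\{e,f\}$ is a two-element cocircuit of $\Rd(G)$; in particular $e$ is an $\Rd$-bridge in $G-f$. By the coning equivalence for bridges (\cref{theorem:rigidconing}, see also \cite{KK}), $e$ is then an $\mathcal{R}_{d+1}$-bridge in the cone $(G-f)^v = G^v - f$. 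Since $\mathcal{R}_{d+1}(G)$ is the restriction of $\mathcal{R}_{d+1}(G^v)$ to $E(G)$, any $\mathcal{R}_{d+1}$-circuit $C$ of $G$ through $e$ is also a circuit of $G^v$; were $f \notin C$ we would have $C \subseteq G^v - f$, contradicting that $e$ is a bridge there. Hence every $\mathcal{R}_{d+1}$-circuit of $G$ through $e$ must contain $f$, and by symmetry (applying the same reasoning to $f$, which is an $\Rd$-bridge in $G-e$) every such circuit through $f$ must contain $e$. In $\mathcal{R}_{d+1}(G)$ the edges $e$ and $f$ are therefore coupled: either both are coloops, which is precisely the desired conclusion, or they again form a two-element cocircuit.

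The main obstacle is exactly this last dichotomy in case \textit{(b)}: ruling out that $\{e,f\}$ is once more a cocircuit in the higher matroid $\mathcal{R}_{d+1}(G)$, equivalently showing $r_{d+1}(G-e) = r_{d+1}(G)-1$ rather than $r_{d+1}(G-e) = r_{d+1}(G)$. Coning controls the ranks of $(G-f)^v$ but not of $G$ itself, so the clean bookkeeping runs out precisely here. I expect to close the gap with a stress-matrix argument: in a generic realization, the cocircuit condition forces the pair of values $(\omega_e,\omega_f)$ of every $\Rd$-stress to lie on a fixed line, and one must show that no stress which additionally lifts to $\RR^{d+1}$ (equivalently, whose stress matrix drops rank by one further) can be nonzero on $e$. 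Establishing this incompatibility between the coupling of $e,f$ and the lifting condition is the technical heart of the theorem.
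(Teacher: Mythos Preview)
Your argument for \textit{(a)} is correct and matches the paper's. Your argument for \textit{(b)} is genuinely incomplete, as you yourself note: you reduce to the dichotomy ``either $e,f$ are both $\mathcal{R}_{d+1}$-coloops, or $\{e,f\}$ is again a two-element cocircuit in $\mathcal{R}_{d+1}(G)$'', and then propose an unspecified stress-matrix argument to eliminate the second option. That second option is not a phantom to be ruled out by analytic means; the whole dichotomy is an artefact of coning over the wrong vertex.

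The paper's trick is this: instead of coning $G-f$ over a \emph{new} apex (which yields a graph missing $f$ and hence not containing $G$), pick an endpoint $w$ of $f$ that is not an endpoint of $e$ --- such $w$ exists since the graph is simple and $f \neq e$. Then $G-w$ is a subgraph of $G-f$ still containing $e$, so $e$ is an $\Rd$-bridge in $G-w$. Now cone $G-w$ back over the \emph{same} vertex $w$: the result is the ``local cone'' $G^w = G + \{wx : x \in V\}$, which \emph{contains $G$ as a subgraph}. By the coning transfer, $e$ is an $\mathcal{R}_{d+1}$-bridge in $G^w$, and since bridges are inherited by subgraphs, $e$ is an $\mathcal{R}_{d+1}$-bridge in $G$ directly. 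No residual cocircuit case arises, and no stress-matrix machinery is needed. (Separately: you reuse the symbol $v$ for the cone apex when $e = uv$; this should be a fresh letter.)
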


\begin{proof} 
\textit{(a)}
Since $G$ is $(d+1)$-connected but $G-e$ is not, we must have $\kappa(u,v;G) \leq d+1$. Now \cref{lemma:Mcircuits} implies that $e$ cannot belong to any $\mathcal{R}_{d+1}$-circuit of $G$, so it is indeed an $\mathcal{R}_{d+1}$-bridge in $G$.

\textit{(b)} Since $G-e$ is not redundantly rigid in $\R^d$, there is an edge $f\in
E$
for which $G-e-f$ is not rigid in $\R^d$. Since $G-f$ is rigid in $\RR^d$, $e$ must be an $\Rd$-bridge in 
$G-f$.
Let $w$ be an end-vertex of $f$ that is disjoint from $e$. Then $e$ must be an $\Rd$-bridge in $G-w$ as well, since $G-w$ is a subgraph of $G-f$.
Therefore, by \cref{theorem:rigidconing},
$e$ is an $\mathcal{R}_{d+1}$-bridge in $G^w$, where $G^w$ is the ``local cone'' $G + \{uw, u \in V\}$. Since $G$ is a subgraph of $G^w$, this implies
that $e$ is an $\mathcal{R}_{d+1}$-bridge in $G$, too.
\end{proof}

Since for $d \leq 2$, global rigidity in $\RR^d$ is equivalent to $(d+1)$-connectivity and redundant rigidity in $\RR^d$ (see \cref{theorem:2dimgloballyrigid}), \cref{theorem:bridge} implies a positive answer to the $d \leq 2$ case of \cref{bridgeconj}. 

\begin{theorem}
\label{bridgeconjspec}
Let $G$ be globally rigid in $\R^d$, where $d \in \{1,2\}$,
and suppose that $G-e$ is not.
Then $e$ is an $\mathcal{R}_{d+1}$-bridge in $G$.
\end{theorem}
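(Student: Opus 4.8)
The plan is to deduce the statement directly from \cref{theorem:bridge} by invoking the combinatorial characterization of low-dimensional global rigidity. The whole reason for restricting to $d \in \{1,2\}$ is that in these dimensions Hendrickson's necessary conditions (\cref{theorem:hendrickson}) are also \emph{sufficient}, so global rigidity can be replaced, as an equivalence, by the pair of purely combinatorial conditions of $(d+1)$-connectivity and redundant rigidity in $\RR^d$.

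First I would translate the hypotheses. Since $G$ is globally rigid in $\RR^d$, the characterization — that $G$ is $2$-connected when $d=1$, and \cref{theorem:2dimgloballyrigid} when $d=2$ — gives that $G$ is simultaneously $(d+1)$-connected and redundantly rigid in $\RR^d$. Applying the same characterization to $G-e$, the assumption that $G-e$ is \emph{not} globally rigid means that $G-e$ must fail at least one of these two conditions: either it is not $(d+1)$-connected, or it is not redundantly rigid in $\RR^d$.

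Next I would split into the two corresponding cases. If $G-e$ is not $(d+1)$-connected, then, combined with the $(d+1)$-connectivity of $G$, this is exactly the hypothesis of \cref{theorem:bridge}\textit{(a)}, so $e$ is an $\mathcal{R}_{d+1}$-bridge in $G$. If instead $G-e$ is not redundantly rigid in $\RR^d$, then, combined with the redundant rigidity of $G$, we are in the situation of \cref{theorem:bridge}\textit{(b)}, which again yields that $e$ is an $\mathcal{R}_{d+1}$-bridge in $G$. As the two cases are exhaustive, this completes the argument.

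I do not expect a serious obstacle here: the result is essentially a repackaging of \cref{theorem:bridge} together with the equivalence in \cref{theorem:2dimgloballyrigid}. The only point that deserves care is that the argument genuinely uses the \emph{equivalence} characterizing global rigidity, and not merely Hendrickson's necessity, in order to conclude that the failure of global rigidity of $G-e$ forces the failure of one of the two combinatorial conditions; this is precisely what confines the proof to $d \leq 2$. For $d=1$ one should additionally note that the relevant characterization is simply $2$-connectivity, so only case \textit{(a)} can arise there.
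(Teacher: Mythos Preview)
Your proposal is correct and follows essentially the same approach as the paper: the paper simply remarks that for $d \leq 2$, global rigidity is equivalent to $(d+1)$-connectivity together with redundant rigidity in $\RR^d$, and then invokes \cref{theorem:bridge}. Your write-up spells out the case split more explicitly, but the underlying argument is identical.
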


Our next goal is to prove \cref{conjecture:globallylinked} for $d \in \{1,2\}$. The one-dimensional case follows from \cref{lemma:Mcircuits}: if $uv$ is an edge of $G$, then the statement is trivial; otherwise $uv$ is contained in an $\mathcal{R}_2$-circuit of $G+uv$, so $\kappa(u,v;G) \geq 2$ holds, which is equivalent to $\{u,v\}$ being globally linked in $\RR^1$.

The $2$-dimensional result follows quickly from the following lemma.
We say that a graph $G$ is \emph{redundantly $\Rd$-connected} if $G-e$ is $\Rd$-connected for every edge $e$ of $G$.

\begin{lemma}\label{theorem:2dimredundantlyMconnected}
Suppose that the graph $G$ is $\mathcal{R}_3$-connected. Then $G$ is redundantly $\mathcal{R}_2$-connected. 
\end{lemma}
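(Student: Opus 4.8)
The plan is to argue by contradiction, pitting a \emph{lower} bound on the vertex-connectivity of the endpoints of the deleted edge against an \emph{upper} bound coming from a hypothetical failure of $\mathcal{R}_2$-connectivity. Fix an edge $e = uv$ of $G$ and suppose, for contradiction, that $H := G - e$ is not $\mathcal{R}_2$-connected. Observe first that by \cref{theorem:Mconnecteddimensiondropping} the graph $G$ is itself $\mathcal{R}_2$-connected, so that $G = H + uv$ becomes $\mathcal{R}_2$-connected precisely upon adding the single edge $uv$ back to $H$.

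For the lower bound I would exploit that $G$ is $\mathcal{R}_3$-connected, so $e$ lies in some $\mathcal{R}_3$-circuit $C$ of $G$. Applying \cref{lemma:Mcircuits} with $d = 3$ to the edge $uv \in E(C)$ gives $\kappa(u,v;C) \ge 4$, and hence $\kappa(u,v;G) \ge 4$. Since deleting the single edge $uv$ destroys at most one internally disjoint $u$--$v$ path, Menger's theorem yields $\kappa(u,v;G-e) \ge 3$; in particular no vertex set of size at most two separates $u$ from $v$ in $H$.

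For the upper bound I would analyse the $\mathcal{R}_2$-components of $H$. The key structural input is the fact that two distinct $\mathcal{R}_2$-components share at most one vertex; I would either cite this or derive it from the statement that two $\mathcal{R}_2$-connected subgraphs meeting in at least two vertices have $\mathcal{R}_2$-connected union. Granting this, I claim that adding $uv$ to $H$ can merge only the (at most two) components whose vertex sets contain $u$ and $v$: every $\mathcal{R}_2$-circuit of $G$ that is not already present in $H$ must contain $uv$, and, being $2$-connected by \cref{lemma:Mcircuits}, it cannot contain edges of a component that meets the rest of $H$ in a single vertex while avoiding $\{u,v\}$. As $G$ is $\mathcal{R}_2$-connected, i.e.\ a single component, $H$ can therefore have at most two $\mathcal{R}_2$-components; since $H$ is assumed \emph{not} $\mathcal{R}_2$-connected, it has exactly two, say $H_1$ with $u \in V(H_1)$ and $H_2$ with $v \in V(H_2)$, and these are the two components joined by $uv$. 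But then $V(H_1) \cap V(H_2)$ is a vertex cut of $H$ separating $u$ from $v$ of size at most one, so $\kappa(u,v;G-e) \le 1$, contradicting the lower bound $\kappa(u,v;G-e) \ge 3$.

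The routine parts (dimension dropping, the Menger step, and assembling the two bounds) are straightforward; I expect the main obstacle to be the two structural claims about $\mathcal{R}_2$-components, namely that distinct components meet in at most one vertex and that a single added edge can merge only the two components at its endpoints. Both rest on the $2$-connectivity of $\mathcal{R}_2$-circuits from \cref{lemma:Mcircuits} together with circuit-elimination arguments in the rigidity matroid, and some care is needed with degenerate configurations (for instance when $u$ or $v$ is itself a shared vertex, or when some components are single $\mathcal{R}_2$-bridges). A secondary point worth recording is that, although coning is the central tool elsewhere in the paper, it does not seem to apply directly here: it would relate $\mathcal{R}_2$-connectivity of $G-e$ to $\mathcal{R}_3$-connectivity of its cone, rather than to the given $\mathcal{R}_3$-connectivity of $G$ itself, so the dimensions do not line up and a more combinatorial argument of the above kind appears necessary.
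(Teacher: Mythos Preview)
Your first structural claim (that distinct $\mathcal{R}_2$-components of $H$ share at most one vertex) is correct for the $2$-dimensional rigidity matroid, and follows from the fact that nontrivial $\mathcal{R}_2$-connected graphs are rigid in $\RR^2$. However, your second structural claim---that adding the single edge $uv$ can merge only the components whose vertex sets contain $u$ or $v$---is \emph{false}, and this is where the argument breaks down. Your justification assumes that any ``third'' component $E_3$ with $u,v\notin V(E_3)$ meets the rest of $H$ in a single vertex; but while $E_3$ meets each \emph{individual} component in at most one vertex, it may meet several components at \emph{different} vertices, and then the $2$-connectivity of a new circuit through $uv$ does not prevent it from traversing $E_3$. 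Concretely, take $H$ to consist of three copies of $K_4$ on vertex sets $\{1,2,3,4\}$, $\{4,5,6,7\}$, $\{1,7,8,9\}$, forming a ``triangle of hinges''. One checks that $H$ is rigid in $\RR^2$ with exactly three $\mathcal{R}_2$-components (the three $K_4$'s), and that $G=H+\{2,5\}$ is $\mathcal{R}_2$-connected: the fundamental circuit of $\{2,5\}$ with respect to a minimally rigid spanning subgraph of $H$ must use the edge $17$ of the third $K_4$, since removing $17$ leaves $2$ and $5$ in different rigid components. Thus $H=G-uv$ has three $\mathcal{R}_2$-components, one of which avoids both $u=2$ and $v=5$, so the reduction to ``exactly two components separated by at most one vertex'' fails. (In this particular example $\kappa(2,5;H)=2$, so the conclusion you want happens to hold, but your route to it does not; and of course this $G$ is not $\mathcal{R}_3$-connected, since the lemma is true.)

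The paper's proof proceeds quite differently, by induction on $|V|$. If $G$ is $3$-connected it is globally rigid in $\RR^2$ by \cref{theorem:2dimgloballyrigid}; $\mathcal{R}_3$-connectivity then rules out $\mathcal{R}_3$-bridges, so \cref{bridgeconjspec} forces $G$ to be redundantly globally rigid, hence redundantly $\mathcal{R}_2$-connected. If $G$ is not $3$-connected, one performs a $2$-separation or cleaving along a separating pair, uses \cref{lemma:Mconnected2sum} to see the pieces are $\mathcal{R}_3$-connected, applies induction, and reassembles via \cref{lemma:redundantlyMconnected2sum}. Note that your direct approach, even if repaired, would in effect have to confront this same dichotomy: the non-$3$-connected case is exactly where the $\mathcal{R}_2$-component structure of $G-e$ can be complicated in the way described above.
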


\begin{theorem}\label{theorem:2dimconjecture}
Let $G = (V,E)$ be a graph and suppose that $\{u,v\}$ is linked in $G$ in $\RR^3$. Then $\{u,v\}$ is globally linked in $G$ in $\RR^2$.
\end{theorem}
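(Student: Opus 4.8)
The plan is to reduce to the situation covered by \cref{theorem:2dimgloballylinked} by exhibiting an $\mathcal{R}_2$-connected subgraph of $G$ in which $u$ and $v$ are joined by three internally vertex-disjoint paths. The only interesting case is $uv \notin E$; if $uv \in E$, then $\{u,v\}$ is trivially globally linked, since equivalence of frameworks preserves the length of every edge.

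First I would unpack the hypothesis. Since $\{u,v\}$ is linked in $G$ in $\RR^3$, that is $r_3(G+uv) = r_3(G)$, basic matroid theory provides an $\mathcal{R}_3$-circuit $C \subseteq G+uv$ that contains the edge $uv$. Because the ground set of $\mathcal{R}_3(C)$ is a single circuit, this matroid is connected, so $C$ is $\mathcal{R}_3$-connected as a graph. Deleting the edge $uv$, the graph $H := C - uv$ is a subgraph of $G$ containing both $u$ and $v$.

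Next I would establish the two properties of $H$ needed to invoke \cref{theorem:2dimgloballylinked}. By \cref{theorem:2dimredundantlyMconnected}, the $\mathcal{R}_3$-connectivity of $C$ implies that $C$ is redundantly $\mathcal{R}_2$-connected; in particular $H = C - uv$ is $\mathcal{R}_2$-connected. For the connectivity bound, \cref{lemma:Mcircuits} applied to the $\mathcal{R}_3$-circuit $C$ with $uv \in E(C)$ yields $\kappa(u,v;C) \geq 4$. In a maximum family of internally disjoint $u$–$v$ paths in $C$, at most one path is the single edge $uv$ (any other $u$–$v$ path uses an internal vertex and hence avoids this edge), so at least three of them survive in $H$, giving $\kappa(u,v;H) \geq 3$. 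Now \cref{theorem:2dimgloballylinked} applies to $H$ and shows that $\{u,v\}$ is globally linked in $H$ in $\RR^2$.

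Finally I would transfer this conclusion from $H$ up to $G$ using the monotonicity of global linkedness under passage to supergraphs: for a generic realization $(G,p)$, its restriction to $H$ is generic, and any framework $(G,q)$ equivalent to $(G,p)$ restricts to a framework of $H$ equivalent to $(H,p|_{V(H)})$, since every edge of $H$ is an edge of $G$. Global linkedness of $\{u,v\}$ in $H$ then forces $\norm{p(u)-p(v)} = \norm{q(u)-q(v)}$, so $\{u,v\}$ is globally linked in $G$ in $\RR^2$. The one genuinely nontrivial ingredient is \cref{theorem:2dimredundantlyMconnected}, which I am taking as given; granting it, the only points requiring care are the identification of $C$ as an $\mathcal{R}_3$-connected graph and the bookkeeping $\kappa(u,v;H) \geq 3$, both of which are routine.
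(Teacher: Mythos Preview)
Your proof is correct and follows essentially the same route as the paper's: extract an $\mathcal{R}_3$-circuit $C$ through $uv$ in $G+uv$, use \cref{theorem:2dimredundantlyMconnected} to get that $C-uv$ is $\mathcal{R}_2$-connected, use \cref{lemma:Mcircuits} to get $\kappa(u,v;C-uv)\geq 3$, and apply \cref{theorem:2dimgloballylinked}. Your write-up is in fact slightly more explicit than the paper's in justifying that a circuit is $\mathcal{R}_3$-connected and in spelling out the monotonicity step.
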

\begin{proof}
We may assume that $uv\notin E$.
Since $\{u,v\}$ is linked in $G$ in $\RR^3$, there is an $\mathcal{R}_3$-circuit $C$ in $G+uv$ containing $uv$. By \cref{lemma:Mcircuits} 
we have $\kappa(u,v;C)\geq 4$, which implies $\kappa(u,v;C-uv)\geq 3$.
Furthermore,
from \cref{theorem:2dimredundantlyMconnected} we have that $C-uv$ is $\mathcal{R}_2$-connected. 
\cref{theorem:2dimgloballylinked} now implies that $\{u,v\}$ is globally linked in $C-uv$ in $\RR^2$, so, since $C-uv$ is a subgraph of $G$, $\{u,v\}$ is also globally linked in $G$ in $\RR^2$.
\end{proof}

Thus, it only remains to prove \cref{theorem:2dimredundantlyMconnected}. To this end, we shall need some facts about the $2$-sum operation, which we define next.
Let $G_1=(V_1,E_1)$ and $G_2 = (V_2,E_2)$ be two disjoint graphs with designated edges $u_iv_i\in E_i$, $i=1,2$.
The \emph{$2$-sum} of $G_1$ and $G_2$ along $u_1v_1$ and $u_2v_2$, denoted by $G_1 \twosum G_2$, is obtained from $G_1-u_1v_1$ and $G_2-u_2v_2$
by identifying $u_1$ with $u_2$, and identifying $v_1$ with $v_2$. The inverse operation of taking the $2$-sum, which is performed
along a {\em nonadjacent} separating vertex pair $\{u,v\}$, is called
\emph{$2$-separation}. If $\{u,v\}$ is an \emph{adjacent} separating vertex pair in a graph $G$, then the operation of removing $uv$ from $G$ and then performing a $2$-separation along $\{u,v\}$ is called \emph{cleaving} along $\{u,v\}$. See \cref{figure:2sum}.

\begin{figure}[ht]
        \centering
        \begin{tikzpicture}[x = 1cm, y = .9cm, scale = 1]
            \tikzset{every node/.style={circle,draw=black, fill=vertexblack,minimum size=6pt,inner sep=0pt}}

            \coordinate (aCenter) at (0,0) {};

            \node[label=above:$u$] (atop) at ($ (aCenter) + (0,1.5) $) {};
            \node[label=below:$v$] (abottom) at ($ (aCenter) + (0,-1.5) $) {};
            \node (alefttop) at ($ (aCenter) + (-1,.8) $) {};
            \node (aleftbottom) at ($ (aCenter) + (-1,-.8) $) {};
            \node (arighttop) at ($ (aCenter) + (1,.8) $) {};
            \node (arightbottom) at ($ (aCenter) + (1,-.8) $) {};
            
            \node[draw=none,fill=none] (aCaption) at ($ (aCenter) + (0,-2.5) $) {$G_1 \oplus_2 G_2$};
            
            \draw [line width=\normaledge,color=edgeblack] (aleftbottom) -- (atop) -- (alefttop) -- (aleftbottom) -- (abottom) -- (arightbottom) -- (arighttop) -- (atop) -- (arightbottom);
            \draw [line width=\normaledge,color=edgeblack] (alefttop) -- (abottom) -- (arighttop);
            
            \coordinate (bCenter) at (5,0) {};
            
            \node[label=above left:$u_1$] (btop1) at ($ (bCenter) + (-.25,1.5) $) {};
            \node[label=below left:$v_1$] (bbottom1) at ($ (bCenter) + (-.25,-1.5) $) {};
            \node[label=above right:$u_2$] (btop2) at ($ (bCenter) + (.25,1.5) $) {};
            \node[label=below right:$v_2$] (bbottom2) at ($ (bCenter) + (.25,-1.5) $) {};
            \node (blefttop) at ($ (bCenter) + (-1.25,.8) $) {};
            \node (bleftbottom) at ($ (bCenter) + (-1.25,-.8) $) {};
            \node (brighttop) at ($ (bCenter) + (1.25,.8) $) {};
            \node (brightbottom) at ($ (bCenter) + (1.25,-.8) $) {};
            
            \node[draw=none,fill=none] (bCaption1) at ($ (bCenter) + (-.9,-2.5) $) {$G_1$};
            \node[draw=none,fill=none] (bCaption2) at ($ (bCenter) + (.9,-2.5) $) {$G_2$};
            
            \draw [line width=\normaledge,color=edgeblack] (btop1) -- (blefttop) -- (bleftbottom) -- (bbottom1) -- (blefttop);
            \draw [line width=\normaledge,color=edgeblack] (bleftbottom) -- (btop1) -- (bbottom1);
            \draw [line width=\normaledge,color=edgeblack] (btop2) -- (brighttop) -- (brightbottom) -- (bbottom2) -- (brighttop);
            \draw [line width=\normaledge,color=edgeblack] (brightbottom) -- (btop2) -- (bbottom2);
            
            \coordinate (cCenter) at (10,0) {};
            
            \node[label=above:$u$] (ctop) at ($ (cCenter) + (0,1.5) $) {};
            \node[label=below:$v$] (cbottom) at ($ (cCenter) + (0,-1.5) $) {};
            \node (clefttop) at ($ (cCenter) + (-1,.8) $) {};
            \node (cleftbottom) at ($ (cCenter) + (-1,-.8) $) {};
            \node (crighttop) at ($ (cCenter) + (1,.8) $) {};
            \node (crightbottom) at ($ (cCenter) + (1,-.8) $) {};
            
            \node[draw=none,fill=none] (cCaption) at ($ (cCenter) + (0,-2.5) $) {$G_1 \oplus_2 G_2 + uv$};
            
            \draw [line width=\normaledge,color=edgeblack] (cleftbottom) -- (ctop) -- (clefttop) -- (cleftbottom) -- (cbottom) -- (crightbottom) -- (crighttop) -- (ctop) -- (crightbottom);
            \draw [line width=\normaledge,color=edgeblack] (clefttop) -- (cbottom) -- (crighttop);
            \draw [line width=\normaledge,color=edgeblack] (ctop) -- (cbottom);
            
            \tikzset{every node/.style={}}
            \def\myshift#1{\raisebox{1.5ex}}
            \draw [->,line width=\normaledge,transform canvas={yshift=.3cm},dashed,color=edgeblack,shorten <= 3pt, shorten >= 3pt,postaction={decorate,decoration={text along path,text align=center,text={|\myshift|$2$-separation}}}] (arighttop) to [bend left=25] (blefttop);
            
            \draw [->,line width=\normaledge,transform canvas={yshift=.3cm},dashed,color=edgeblack,shorten <= 3pt, shorten >= 3pt] (clefttop) to [bend right=25] (brighttop);
            
            \draw [draw=none,transform canvas={yshift=.3cm},postaction={decorate,decoration={text along path,text align=center,text={|\myshift|cleaving}}}] (brighttop) to [bend left=25] (clefttop);
            
            \def\myshift#1{\raisebox{-2.5ex}}            
            \draw [->,transform canvas={yshift=-.3cm},line width=\normaledge,dashed,color=edgeblack,shorten <= 3pt, shorten >= 3pt] (bleftbottom) to [bend left=25] (arightbottom);
            
            \draw [draw=none,transform canvas={yshift=-.3cm},postaction={decorate,decoration={text along path,text align=center,text={|\myshift|$2$-sum}}}] (arightbottom) to [bend right=25] (bleftbottom);            
            
        \end{tikzpicture}
        \vspace*{-9mm}
        \caption{The $2$-sum operation on two copies of $K_4$.}
        \label{figure:2sum}
\end{figure}

\begin{lemma}\label{lemma:circuit2sum}\cite[Lemma 10]{flexiblecircuits} (see also \cite{2sum})
Let $d$ be a positive integer and $G_i = (V_i,E_i), i = 1,2$ two disjoint graphs with $u_iv_i \in E_i$. Then $G_1 \twosum G_2$ is an $\Rd$-circuit if and only if $G_1$ and $G_2$ are $\Rd$-circuits.
\end{lemma}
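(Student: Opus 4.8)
The plan is to argue via stresses, using the characterization of $\Rd$-circuits recalled in \cref{section:preliminaries}: a graph is an $\Rd$-circuit exactly when, for a generic realization, its space of stresses is one-dimensional and the (essentially unique) nonzero stress is nowhere zero; it is $\Rd$-independent exactly when the generic realization is stress-free. Write $H = G_1 \twosum G_2$, and let $u = u_1 = u_2$ and $v = v_1 = v_2$ be the two identified vertices, so that $E(H) = (E_1 - u_1v_1) \cup (E_2 - u_2v_2)$. I would fix a generic realization $(H,p)$ and set $w = p(u) - p(v) \neq 0$. Since $u,v$ are the only vertices shared by the two sides, the restrictions $p_1 = p|_{V_1}$ and $p_2 = p|_{V_2}$ are generic realizations of $G_1$ and $G_2$.

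The heart of the argument is an isomorphism between the stress space of $(H,p)$ and the space of \emph{compatible} pairs of stresses of the two sides:
\begin{equation*}
\text{Stress}(H,p) \;\cong\; \bigl\{(\omega_1,\omega_2) : \omega_i \text{ a stress of } (G_i,p_i),\ \omega_1(u_1v_1) = -\omega_2(u_2v_2)\bigr\}.
\end{equation*}
One direction is immediate: given compatible $\omega_1,\omega_2$, the equilibrium condition of $(G_i,p_i)$ at $u$ reads $\sum_{xu \in E_i - u_iv_i}\omega_i(xu)(p(u)-p(x)) = -\omega_i(u_iv_i)\,w$, so the concatenation of $\omega_1,\omega_2$ on $E(H)$ has net force $-(\omega_1(u_1v_1)+\omega_2(u_2v_2))w = 0$ at $u$ (and symmetrically at $v$), hence is a stress of $(H,p)$. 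The converse inclusion is the main obstacle. Given a stress $\omega$ of $(H,p)$, its restriction to $E_1 - u_1v_1$ is in equilibrium at every vertex of $V_1 \setminus \{u,v\}$ but exerts some net force $S$ at $u$ and $-S$ at $v$. Crucially, this residual load is the image of $\omega|_{E_1-u_1v_1}$ under $R(G_1 - u_1v_1, p_1)^T$, so it lies in the row space of the rigidity matrix and is therefore orthogonal to every infinitesimal rigid motion. The translation conditions give $S + (-S) = 0$, while the rotation conditions give $\langle S, Aw\rangle = 0$ for every antisymmetric $A$; since $\{Aw : A \text{ antisymmetric}\} = w^{\perp}$, this forces $S \in \Span(w)$. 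Hence there is a unique scalar $c_1$ with $S + c_1 w = 0$, and extending $\omega|_{E_1 - u_1v_1}$ by $\omega_1(u_1v_1) := c_1$ yields a genuine stress of $(G_1,p_1)$; defining $\omega_2$ analogously and reading off the equilibrium of $\omega$ at $u$ gives $c_1 + c_2 = 0$, i.e.\ compatibility. These two assignments are mutually inverse, establishing the isomorphism.

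With the isomorphism in hand both directions will follow by elementary bookkeeping, provided (as is standard for the $2$-sum, and automatic for circuits, whose smallest member is $K_{d+2}$) that each set $E_i - u_iv_i$ is nonempty. If $G_1,G_2$ are $\Rd$-circuits, each stress space is spanned by a nowhere-zero stress $\omega_i^{*}$, so $\omega_i^{*}(u_iv_i) \neq 0$; the compatibility equation then cuts the plane of pairs $(a_1\omega_1^{*}, a_2\omega_2^{*})$ down to a line on which $a_1$ and $a_2$ are both nonzero, giving $(H,p)$ a one-dimensional, nowhere-zero stress space, so $H$ is an $\Rd$-circuit. Conversely, suppose $H$ is an $\Rd$-circuit, so $\text{Stress}(H,p)$ is one-dimensional with a nowhere-zero generator $\omega$. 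Writing $s_i = \dim\text{Stress}(G_i,p_i)$, the isomorphism forces $s_1 + s_2 \in \{1,2\}$; since $\omega$ is nonzero on each nonempty set $E_i - u_iv_i$, neither side can be stress-free, whence $s_1 = s_2 = 1$ and the compatibility equation is nontrivial. Writing $\omega \leftrightarrow (a_1\omega_1^{*}, a_2\omega_2^{*})$ for the corresponding generators $\omega_i^{*}$ of $\text{Stress}(G_i,p_i)$, the nowhere-vanishing of $\omega$ on $E_i - u_iv_i$ gives $a_i \neq 0$ and shows $\omega_i^{*}$ is nonzero there, while non-triviality of the compatibility equation then forces $\omega_i^{*}(u_iv_i) \neq 0$ as well; hence each $G_i$ has a one-dimensional, nowhere-zero stress space and is an $\Rd$-circuit.
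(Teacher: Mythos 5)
Your argument is correct, but note that the paper does not actually prove \cref{lemma:circuit2sum} at all: it imports the statement from \cite{flexiblecircuits} (Lemma 10) and \cite{2sum}, so there is no internal proof to match. What you have written is a legitimate self-contained proof via the stress-space decomposition, and it is a genuinely different route from the cited sources, which argue matroid-theoretically (via rank counts and the behaviour of the $2$-sum in the rigidity matroid, resp.\ the Gluing Lemma). Your key step --- that a stress of $G_1 \twosum G_2$ restricted to $E_1 - u_1v_1$ leaves a residual load supported on $\{u,v\}$ that is orthogonal to all trivial motions, hence of the form $\pm c\,w$ with $w = p(u)-p(v)$, and therefore extends uniquely to a stress of $(G_1,p_1)$ --- is sound: the residual lies in the row space of $R(G_1-u_1v_1,p_1)$, translations give $S_u + S_v = 0$, rotations give $S \perp w^{\perp}$ via $\{Aw : A \text{ antisymmetric}\} = w^{\perp}$ (which degenerates harmlessly when $d=1$, where $\Span(w) = \RR$). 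The resulting isomorphism with compatible pairs, together with the paper's stated characterization of $\Rd$-circuits as graphs whose generic realizations carry a one-dimensional, nowhere-zero stress space, then yields both directions by the dimension count $s_1 + s_2 \in \{1,2\}$ exactly as you describe; in particular your compressed final step is valid, since $a_1,a_2 \neq 0$ plus $a_1\omega_1^{*}(u_1v_1) = -a_2\omega_2^{*}(u_2v_2)$ forces \emph{both} values $\omega_i^{*}(u_iv_i)$ to be nonzero once the compatibility equation is nontrivial (if one vanished, so would the other, making the kernel two-dimensional). What the stress approach buys is a proof from first principles using only material already in \cref{section:preliminaries}; what the matroid route buys is brevity and the rank formula $r_d(G_1 \twosum G_2) = r_d(G_1) + r_d(G_2) - 1$ as a by-product.

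One point deserves emphasis: the nondegeneracy hypothesis you flag is not merely a convenience. If $G_1 = K_2$ with $E_1 = \{u_1v_1\}$, then $G_1 \twosum G_2 = G_2 - u_2v_2$, and one can choose $G_2$ so that this is an $\Rd$-circuit while $G_1$ is not; so the lemma as literally stated in the paper (which places no lower bound on $|E_i|$) fails in this degenerate case, and the hypothesis $E_i - u_iv_i \neq \emptyset$ (implicit in the cited sources' definition of the $2$-sum, and automatic in the circuit-to-sum direction since the smallest $\Rd$-circuit is $K_{d+2}$) is exactly what your proof needs and correctly supplies.
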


Using \cref{lemma:circuit2sum}, we can prove similar results regarding $2$-sums of (redundantly) $\Rd$-connected graphs, \cref{lemma:Mconnected2sum,lemma:redundantlyMconnected2sum} below. Their proofs, which are easy, can be found in \cref{appendix:A}. We note that the $d = 2$ case of the following lemma can be found in \cite{jackson.jordan_2005}.

\begin{lemma}\label{lemma:Mconnected2sum}
Let $d$ be a positive integer and $G_i = (V_i,E_i), i = 1,2$ two disjoint graphs with designated edges $u_iv_i \in E_i$. The following are equivalent.
\begin{enumerate}[label=\textit{(\alph*)}]
    \item $G_1$ and $G_2$ are $\Rd$-connected,
    \item $G_1 \twosum G_2$ is $\Rd$-connected,
    \item $G_1 \twosum G_2 + uv$ is $\Rd$-connected, where $u$ and $v$ are the vertices obtained from $u_1,u_2$ and $v_1,v_2$, respectively, after identification.
\end{enumerate}
\end{lemma}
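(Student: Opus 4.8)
The plan is to first establish a description of the $\Rd$-circuits of $G_1 \twosum G_2$ in terms of those of $G_1$ and $G_2$, and then read off each equivalence from it together with standard facts about matroid connectivity. Writing $e_i = u_iv_i$, I claim that the $\Rd$-circuits of $G_1 \twosum G_2$ are exactly: (i) the $\Rd$-circuits of $G_1$ avoiding $e_1$, (ii) the $\Rd$-circuits of $G_2$ avoiding $e_2$, and (iii) the graphs $C_1 \twosum C_2$, where $C_i$ is an $\Rd$-circuit of $G_i$ with $e_i \in C_i$. That each such set is a circuit of $G_1 \twosum G_2$ is immediate: $\mathcal{R}_d(G_i - e_i)$ is the restriction of $\mathcal{R}_d(G_1 \twosum G_2)$ to $E_i - e_i$, which handles (i) and (ii), while (iii) is a circuit by \cref{lemma:circuit2sum}.

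The heart of the argument — and the step I expect to be the main obstacle — is the reverse inclusion, that there are no other circuits. The key observation is that \cref{lemma:circuit2sum} is an \emph{equivalence}, so it can be used to decompose a circuit as well as to build one. Concretely, let $C$ be an $\Rd$-circuit of $G_1 \twosum G_2$ and set $D = C \cap (E_1 - e_1)$ and $D' = C \cap (E_2 - e_2)$. If one of these is empty then $C$ lies inside a single side and is a circuit of type (i) or (ii) by restriction. Otherwise both are nonempty, and the decomposition $C = (D + e_1) \twosum (D' + e_2)$ exhibits $C$ as a $2$-sum; since $C$ is an $\Rd$-circuit, the ``only if'' direction of \cref{lemma:circuit2sum} forces both $D + e_1$ and $D' + e_2$ to be $\Rd$-circuits of $G_1$ and $G_2$, so $C$ has type (iii). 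This gives the full characterization.

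Given the characterization, the equivalence of (a) and (b) is routine. For (a)$\Rightarrow$(b) I would take any two edges of $G_1 \twosum G_2$ and build a common circuit: two edges on the same side lie on a common circuit of that $G_i$, which is a type-(i)/(ii) circuit if it avoids $e_i$ and can otherwise be $2$-summed with a circuit of the other side through its designated edge (such a circuit exists because a connected matroid on at least two elements has no coloops); two edges on opposite sides are handled by $2$-summing circuits of $G_1$ and $G_2$ through $e_1$ and $e_2$. For (b)$\Rightarrow$(a), given two edges of $G_1$ I would produce a circuit of $G_1 \twosum G_2$ through them — or, in the degenerate case that the second edge is $e_1$, through one of them and an edge of the other side — and then apply the characterization: a circuit of $G_1 \twosum G_2$ meeting $E_1 - e_1$ is either a circuit of $G_1$ (type (i)) or, via its type-(iii) form $C_1 = (C \cap (E_1 - e_1)) + e_1$, restricts to a circuit of $G_1$ containing the two edges. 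Symmetry gives connectivity of $G_2$ as well.

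Finally, for the equivalence of (a) and (c) I would avoid analysing circuits of the edge-amalgam $G_1 \twosum G_2 + uv$ directly, and instead reduce to the already-proven case (a)$\Leftrightarrow$(b). The point is that this graph is itself a $2$-sum: if $\tilde G_2$ denotes $G_2$ with an additional edge $e_2'$ parallel to $e_2 = u_2v_2$, then $G_1 \twosum G_2 + uv = G_1 \twosum \tilde G_2$, where the right-hand $2$-sum is taken along $e_1$ and $e_2'$ (deleting $e_1$ and $e_2'$ reproduces exactly $(E_1 - e_1) \cup E_2$, with $e_2$ playing the role of $uv$). By the equivalence of (a) and (b) already proved, the right-hand side is $\Rd$-connected if and only if $G_1$ and $\tilde G_2$ are. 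Since $e_2'$ and $e_2$ are parallel in $\mathcal{R}_d(\tilde G_2)$ — they form a two-element $\Rd$-circuit — they always lie in the same connected component, so $\tilde G_2$ is $\Rd$-connected exactly when $G_2$ is. Combining these observations yields (a)$\Leftrightarrow$(c). Throughout I am tacitly assuming the $2$-sum is non-degenerate (each $G_i$ has an edge other than $e_i$), which is what makes the coloop-free and ``pick an edge on the other side'' steps go through; the remaining degenerate cases are trivial.
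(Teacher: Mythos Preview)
Your proof is correct and takes a genuinely different route from the paper's. The paper argues via the cycle (a)$\Rightarrow$(b)$\Rightarrow$(c)$\Rightarrow$(a): for (a)$\Rightarrow$(b) it builds a type-(iii) circuit directly, much as you do; for (b)$\Rightarrow$(c) it takes a circuit of $G_1 \twosum G_2$ meeting both sides, uses $2$-connectivity of $\Rd$-circuits (\cref{lemma:Mcircuits}) to see that $u,v$ lie in it, and then $2$-separates along $\{u,v\}$ to produce a circuit through $uv$; for (c)$\Rightarrow$(a) it invokes an additional lemma (\cref{lemma:circuitseparatingedge}, a consequence of the gluing lemma) asserting that a separating pair of an $\Rd$-circuit is never adjacent, which forces any circuit of $G_1 \twosum G_2 + uv$ through $uv$ to lie entirely in one $G_i$. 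You instead front-load the work into a complete circuit characterization of $G_1 \twosum G_2$ --- effectively identifying $\mathcal{R}_d(G_1 \twosum G_2)$ with the matroid $2$-sum $\mathcal{R}_d(G_1) \oplus_2 \mathcal{R}_d(G_2)$ --- and then read off (a)$\Leftrightarrow$(b) from it. Your multigraph trick for (c), rewriting $G_1 \twosum G_2 + uv$ as $G_1 \twosum \tilde G_2$ with a doubled edge, neatly sidesteps \cref{lemma:circuitseparatingedge} altogether; it does require \cref{lemma:circuit2sum} (and hence your circuit characterization and (a)$\Leftrightarrow$(b)) for multigraphs, but that extension is harmless since parallel edges are just parallel matroid elements. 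The trade-off is that the paper's argument is shorter and more self-contained per implication, while yours exposes the underlying matroid structure and yields the circuit description as a reusable byproduct.

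One small quibble: your assertion that the degenerate cases are ``trivial'' is not quite right --- if $G_2$ is a single edge then (b) reads ``$G_1 - e_1$ is $\Rd$-connected'', which is genuinely not equivalent to (a) (take $G_1 = K_{d+2}$). The paper's proof makes the same tacit non-degeneracy assumption, so this is a defect of the lemma's stated generality rather than of your argument.
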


\begin{lemma}\label{lemma:redundantlyMconnected2sum}
Let $d$ be a positive integer and $G_i = (V_i,E_i), i = 1,2$ two disjoint graphs with designated edges $u_iv_i \in E_i$. Then the following hold.
\begin{enumerate}[label=\textit{(\alph*)}]
    \item If $G_1$ and $G_2$ are redundantly $\Rd$-connected, then $G_1 \twosum G_2$ is redundantly $\Rd$-connected.
    \item $G_1 \twosum G_2$ is redundantly $\Rd$-connected if and only if $G_1 \twosum G_2 + uv$ is redundantly $\Rd$-connected, where $u$ and $v$ are the vertices obtained from $u_1,u_2$ and $v_1,v_2$, respectively, after identification.
\end{enumerate}
\end{lemma}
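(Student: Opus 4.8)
The plan is to reduce both parts to \cref{lemma:Mconnected2sum} using two elementary observations. The first is an edge-deletion identity for $2$-sums: if $e \in E_1 \setminus \{u_1v_1\}$, then $e$ is distinct from the designated edge and hence survives in both $G_1 - e$ and the sum, so that
\begin{equation*}
(G_1 \twosum G_2) - e = (G_1 - e) \twosum G_2,
\end{equation*}
and symmetrically for $e \in E_2 \setminus \{u_2v_2\}$. The second is the observation that a redundantly $\Rd$-connected graph $H$ with at least three edges is itself $\Rd$-connected: given any two edges $e,f$ of $H$, choose a third edge $g$; since $H - g$ is $\Rd$-connected and contains $e$ and $f$, these lie in a common $\Rd$-circuit of $H-g$, which is also an $\Rd$-circuit of $H$. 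Thus every pair of edges of $H$ lies in a common $\Rd$-circuit, so $H$ is $\Rd$-connected. (The only genuine exceptions are graphs with exactly two edges, which are vacuously redundantly $\Rd$-connected but not $\Rd$-connected; I would dispose of such degenerate summands separately.)

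For part \textit{(a)}, let $e$ be any edge of $G_1 \twosum G_2$; by symmetry assume $e \in E_1 \setminus \{u_1v_1\}$. Then $(G_1 \twosum G_2) - e = (G_1 - e) \twosum G_2$ by the identity above. Since $G_1$ is redundantly $\Rd$-connected, $G_1 - e$ is $\Rd$-connected, and since $G_2$ is redundantly $\Rd$-connected it is $\Rd$-connected by the observation. Applying \cref{lemma:Mconnected2sum} to the pair $G_1 - e$, $G_2$ shows that $(G_1 - e) \twosum G_2$ is $\Rd$-connected. As $e$ was an arbitrary edge of the sum, $G_1 \twosum G_2$ is redundantly $\Rd$-connected.

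For part \textit{(b)}, write $H = G_1 \twosum G_2$. For each edge $e \in E(H)$ the deletion $H - e$ is again a $2$-sum, and $(H+uv) - e = (H-e) + uv$ is that same $2$-sum with the identified edge $uv$ added back; by the equivalence of parts \textit{(b)} and \textit{(c)} of \cref{lemma:Mconnected2sum}, $H - e$ is $\Rd$-connected if and only if $(H+uv) - e$ is. For the remaining edge we have $(H+uv) - uv = H$. Chaining these, $H + uv$ is redundantly $\Rd$-connected if and only if $H - e$ is $\Rd$-connected for every $e \in E(H)$ \emph{and} $H$ itself is $\Rd$-connected; that is, if and only if $H$ is both redundantly $\Rd$-connected and $\Rd$-connected. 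By the observation the extra condition is automatic (in the non-degenerate case), so this is equivalent to $H$ being redundantly $\Rd$-connected, as required.

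The routine parts here are the edge-deletion identity and the bookkeeping over the edges of the $2$-sum, both of which follow directly from the definitions together with \cref{lemma:Mconnected2sum}. The one point that genuinely requires care is the implication that redundant $\Rd$-connectivity entails $\Rd$-connectivity: it is what lets me conclude that the summand $G_2$ is $\Rd$-connected in part \textit{(a)}, and it is what absorbs the extra ``$H$ is $\Rd$-connected'' condition produced by the $e = uv$ deletion in part \textit{(b)}. This implication holds for all graphs on at least three edges, so the only loose end is to verify that the two-edge exceptional graphs either do not occur as summands in the intended applications or can be checked by hand.
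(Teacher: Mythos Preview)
Your proof is correct and follows essentially the same approach as the paper: both parts are reduced to \cref{lemma:Mconnected2sum} via the identity $(G_1 \twosum G_2) - e = (G_1 - e) \twosum G_2$. If anything you are more careful than the paper, which leaves both the implication ``redundantly $\Rd$-connected $\Rightarrow$ $\Rd$-connected'' and the $e = uv$ deletion in part \textit{(b)} implicit.
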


We note that if $G_1 \twosum G_2$ is redundantly $\Rd$-connected, then $G_1-e$ is $\Rd$-connected for every edge $e$ other than $u_1v_1$, but it can happen that $G_1-u_1v_1$ is \emph{not} $\Rd$-connected. See \cref{fig:redundantlyMconnected} for a one-dimensional example.

\begin{figure}[b]
    \centering
    \begin{subfigure}[b]{0.4\linewidth}
        \centering
        \begin{tikzpicture}[x = 1cm, y = 1cm, scale = .8]

            \node[circle,draw=black, fill=vertexblack,minimum size=6pt,inner sep=0pt] (1) at (0,0) {};
            \node[circle,draw=black, fill=vertexblack,minimum size=6pt,inner sep=0pt,label=above:$u_1$] (2) at (1,1) {};
            \node[circle,draw=black, fill=vertexblack,minimum size=6pt,inner sep=0pt] (3) at (0,2) {};
            \node[circle,draw=black, fill=vertexblack,minimum size=6pt,inner sep=0pt] (4) at (-1,1) {};
            \node[circle,draw=black, fill=vertexblack,minimum size=6pt,inner sep=0pt,label=below:$v_1$] (5) at (1,-1) {};
            \node[circle,draw=black, fill=vertexblack,minimum size=6pt,inner sep=0pt] (6) at (0,-2) {};
            \node[circle,draw=black, fill=vertexblack,minimum size=6pt,inner sep=0pt] (7) at (-1,-1) {};

            \draw [line width=\normaledge,color=edgeblack] (1) -- (2);
            \draw [line width=\normaledge,color=edgeblack] (2) -- (3);
            \draw [line width=\normaledge,color=edgeblack] (3) -- (4);
            \draw [line width=\normaledge,color=edgeblack] (4) -- (1);
            \draw [line width=\normaledge,color=edgeblack] (1) -- (3);
            \draw [line width=\normaledge,color=edgeblack] (2) -- (4);
            
            \draw [line width=\normaledge,color=edgeblack] (1) -- (5);
            \draw [line width=\normaledge,color=edgeblack] (1) -- (6);
            \draw [line width=\normaledge,color=edgeblack] (1) -- (7);
            \draw [line width=\normaledge,color=edgeblack] (5) -- (6);
            \draw [line width=\normaledge,color=edgeblack] (5) -- (7);
            \draw [line width=\normaledge,color=edgeblack] (6) -- (7);

            \draw [line width=\normaledge,color=edgeblack] (5) -- (2);

            \node[circle,draw=black, fill=vertexblack,minimum size=6pt,inner sep=0pt] (a) at (3.5,0) {};
            \node[circle,draw=black, fill=vertexblack,minimum size=6pt,inner sep=0pt] (b) at (4.5,1) {};
            \node[circle,draw=black, fill=vertexblack,minimum size=6pt,inner sep=0pt] (c) at (3.5,2) {};
            \node[circle,draw=black, fill=vertexblack,minimum size=6pt,inner sep=0pt,label=above:$u_2$] (d) at (2.5,1) {};
            \node[circle,draw=black, fill=vertexblack,minimum size=6pt,inner sep=0pt] (e) at (4.5,-1) {};
            \node[circle,draw=black, fill=vertexblack,minimum size=6pt,inner sep=0pt] (f) at (3.5,-2) {};
            \node[circle,draw=black, fill=vertexblack,minimum size=6pt,inner sep=0pt,label=below:$v_2$] (g) at (2.5,-1) {};

            \draw [line width=\normaledge,color=edgeblack] (a) -- (b);
            \draw [line width=\normaledge,color=edgeblack] (a) -- (c);
            \draw [line width=\normaledge,color=edgeblack] (a) -- (d);
            \draw [line width=\normaledge,color=edgeblack] (b) -- (c);
            \draw [line width=\normaledge,color=edgeblack] (b) -- (d);
            \draw [line width=\normaledge,color=edgeblack] (c) -- (d);
            
            \draw [line width=\normaledge,color=edgeblack] (a) -- (e);
            \draw [line width=\normaledge,color=edgeblack] (a) -- (f);
            \draw [line width=\normaledge,color=edgeblack] (a) -- (g);
            \draw [line width=\normaledge,color=edgeblack] (e) -- (f);
            \draw [line width=\normaledge,color=edgeblack] (e) -- (g);
            \draw [line width=\normaledge,color=edgeblack] (f) -- (g);

            \draw [line width=\normaledge,color=edgeblack] (g) -- (d);
        \end{tikzpicture}
        \caption{}
    \end{subfigure}
    \hspace{2em}
    \begin{subfigure}[b]{0.4\linewidth}
        \centering
        \begin{tikzpicture}[x = 1cm, y = 1cm, scale = .8]

            \node[circle,draw=black, fill=vertexblack,minimum size=6pt,inner sep=0pt] (1) at (0,0) {};
            \node[circle,draw=black, fill=vertexblack,minimum size=6pt,inner sep=0pt,label=above:$u$] (2) at (1,1) {};
            \node[circle,draw=black, fill=vertexblack,minimum size=6pt,inner sep=0pt] (3) at (0,2) {};
            \node[circle,draw=black, fill=vertexblack,minimum size=6pt,inner sep=0pt] (4) at (-1,1) {};
            \node[circle,draw=black, fill=vertexblack,minimum size=6pt,inner sep=0pt,label=below:$v$] (5) at (1,-1) {};
            \node[circle,draw=black, fill=vertexblack,minimum size=6pt,inner sep=0pt] (6) at (0,-2) {};
            \node[circle,draw=black, fill=vertexblack,minimum size=6pt,inner sep=0pt] (7) at (-1,-1) {};

            \draw [line width=\normaledge,color=edgeblack] (1) -- (2);
            \draw [line width=\normaledge,color=edgeblack] (2) -- (3);
            \draw [line width=\normaledge,color=edgeblack] (3) -- (4);
            \draw [line width=\normaledge,color=edgeblack] (4) -- (1);
            \draw [line width=\normaledge,color=edgeblack] (1) -- (3);
            \draw [line width=\normaledge,color=edgeblack] (2) -- (4);
            
            \draw [line width=\normaledge,color=edgeblack] (1) -- (5);
            \draw [line width=\normaledge,color=edgeblack] (1) -- (6);
            \draw [line width=\normaledge,color=edgeblack] (1) -- (7);
            \draw [line width=\normaledge,color=edgeblack] (5) -- (6);
            \draw [line width=\normaledge,color=edgeblack] (5) -- (7);
            \draw [line width=\normaledge,color=edgeblack] (6) -- (7);

            \node[circle,draw=black, fill=vertexblack,minimum size=6pt,inner sep=0pt] (a) at (2,0) {};
            \node[circle,draw=black, fill=vertexblack,minimum size=6pt,inner sep=0pt] (b) at (3,1) {};
            \node[circle,draw=black, fill=vertexblack,minimum size=6pt,inner sep=0pt] (c) at (2,2) {};
            \node[circle,draw=black, fill=vertexblack,minimum size=6pt,inner sep=0pt] (d) at (1,1) {};
            \node[circle,draw=black, fill=vertexblack,minimum size=6pt,inner sep=0pt] (e) at (3,-1) {};
            \node[circle,draw=black, fill=vertexblack,minimum size=6pt,inner sep=0pt] (f) at (2,-2) {};
            \node[circle,draw=black, fill=vertexblack,minimum size=6pt,inner sep=0pt] (g) at (1,-1) {};

            \draw [line width=\normaledge,color=edgeblack] (a) -- (b);
            \draw [line width=\normaledge,color=edgeblack] (a) -- (c);
            \draw [line width=\normaledge,color=edgeblack] (a) -- (d);
            \draw [line width=\normaledge,color=edgeblack] (b) -- (c);
            \draw [line width=\normaledge,color=edgeblack] (b) -- (d);
            \draw [line width=\normaledge,color=edgeblack] (c) -- (d);
            
            \draw [line width=\normaledge,color=edgeblack] (a) -- (e);
            \draw [line width=\normaledge,color=edgeblack] (a) -- (f);
            \draw [line width=\normaledge,color=edgeblack] (a) -- (g);
            \draw [line width=\normaledge,color=edgeblack] (e) -- (f);
            \draw [line width=\normaledge,color=edgeblack] (e) -- (g);
            \draw [line width=\normaledge,color=edgeblack] (f) -- (g);
        \end{tikzpicture}
        \caption{}
    \end{subfigure}
    \caption{A pair of isomorphic graphs that are not redundantly $\mathcal{R}_1$-connected, but their $2$-sum is redundantly $\mathcal{R}_1$-connected.}
    \label{fig:redundantlyMconnected}
\end{figure}
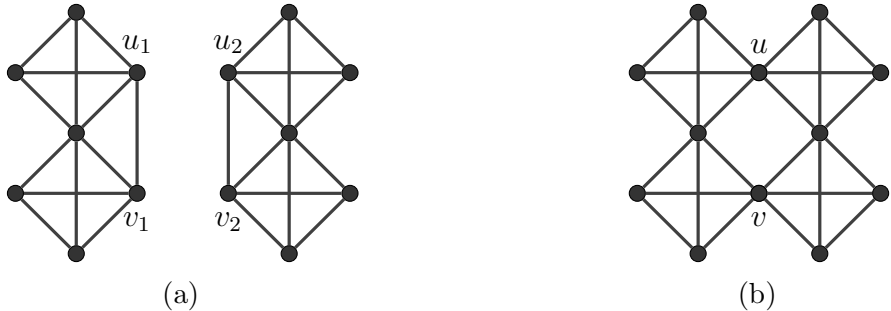

\begin{proof}[Proof of \cref{theorem:2dimredundantlyMconnected}]
We prove by induction on the number of vertices. The only $\mathcal{R}_3$-connected graph on at most five vertices is $K_5$, which is redundantly $\mathcal{R}_2$-connected. 

Now assume that $|V| \geq 6$. By \cref{theorem:Mconnecteddimensiondropping}, $G$ is $\mathcal{R}_2$-connected, and by \cref{lemma:Mcircuits} it is $2$-connected. If $G$ is also $3$-connected, then it is globally rigid in $\RR^2$ by \cref{theorem:2dimgloballyrigid}. In this case $G$ is, in fact, redundantly globally rigid in $\RR^2$, since otherwise \cref{bridgeconjspec} would imply that it has an $\mathcal{R}_3$-bridge, contradicting the assumption that $G$ is $\mathcal{R}_3$-connected. Using \cref{theorem:2dimgloballyrigid} again we get that $G$ is redundantly $\mathcal{R}_2$-connected, as desired. 

It remains to consider the case when $G$ is not $3$-connected. Let $\{u,v\}$ be a separating vertex pair in $G$ and let $G_1,G_2$ be the graphs obtained by a $2$-separation along $\{u,v\}$ if $u$ and $v$ are nonadjacent in $G$, and by cleaving along $\{u,v\}$ otherwise. Now \cref{lemma:Mconnected2sum} implies that $G_1$ and $G_2$ are $\mathcal{R}_3$-connected. By the induction hypothesis, $G_1$ and $G_2$ are redundantly $\mathcal{R}_2$-connected, and from \cref{lemma:redundantlyMconnected2sum} we get that $G$ is redundantly $\mathcal{R}_2$-connected. 
\end{proof}

As we saw, \cref{theorem:2dimconjecture} followed quickly from \cref{theorem:2dimredundantlyMconnected}. We close this section by briefly considering the $d$-dimensional analogue of the latter result. A positive answer to the following conjecture, which would be a strengthening of \cref{theorem:Mconnecteddimensiondropping}, may be useful in answering \cref{conjecture:globallylinked}.

\begin{conjecture}\label{conjecture:redundantlyMconnected}
Let $G$ be a graph and $d$ a positive integer. If $G$ is $\mathcal{R}_{d+1}$-connected, then $G$ is redundantly $\Rd$-connected.
\end{conjecture}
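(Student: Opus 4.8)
The plan is to prove \cref{conjecture:redundantlyMconnected} by induction on $|V(G)|$, closely following the strategy used for the $d=2$ case in \cref{theorem:2dimredundantlyMconnected}. For the base case, note that on at most $d+3$ vertices the only $\mathcal{R}_{d+1}$-connected graph is the complete $\mathcal{R}_{d+1}$-circuit $K_{d+3}$ (any $\mathcal{R}_{d+1}$-connected graph with more than one edge contains a circuit, and the smallest $\mathcal{R}_{d+1}$-circuit is $K_{d+3}$); here one verifies directly that $K_{d+3}-e$ is $\Rd$-connected for every edge $e$, which by edge-transitivity reduces to a single finite computation, exactly as in the check that $K_5$ is redundantly $\mathcal{R}_2$-connected. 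For the inductive step, assume $G$ is $\mathcal{R}_{d+1}$-connected with $|V|\geq d+4$. By \cref{theorem:Mconnecteddimensiondropping} $G$ is already $\Rd$-connected, and by \cref{lemma:Mcircuits} it is $2$-connected, so the entire content is to show that deleting any one edge preserves $\Rd$-connectivity.

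As in the two-dimensional argument, I would split the inductive step into a ``highly connected'' case and a ``separable'' case. In the separable case, when $G$ has a $2$-separation (or an adjacent separating pair along which we cleave), the $2$-sum lemmas \cref{lemma:Mconnected2sum} and \cref{lemma:redundantlyMconnected2sum}, which are already stated for arbitrary $d$, let us pass to the two smaller pieces $G_1,G_2$, conclude by induction that each is redundantly $\Rd$-connected, and then reassemble. A first genuine difficulty is that for $d\geq 3$ the dichotomy ``$(d+1)$-connected or admits a $2$-separation'' no longer exhausts all cases: a graph can be $3$-connected and yet far from $(d+1)$-connected, so one must also handle separators of size $3,\dots,d$. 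Doing so would require a theory of generalized sums along larger cut sets, together with circuit-decomposition statements in the spirit of \cref{lemma:circuit2sum}; such operations are far less well-behaved than the $2$-sum, and supplying them is a substantial (though plausibly self-contained) piece of work.

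The main obstacle, however, is the highly connected case. For $d=2$ this was resolved by invoking the combinatorial characterization of planar global rigidity (\cref{theorem:2dimgloballyrigid}) together with the $d=2$ instance of the bridge theorem (\cref{bridgeconjspec}): a $3$-connected $\mathcal{R}_3$-connected graph is globally rigid in $\RR^2$, hence redundantly globally rigid, hence redundantly $\mathcal{R}_2$-connected. None of these tools survives to $d\geq 3$: there is no known combinatorial characterization of $d$-dimensional global rigidity, and the bridge conjecture (\cref{bridgeconj}) is itself open in this range. Thus the $(d+1)$-connected case appears to demand a genuinely new argument. The most promising route, in my view, is a direct linear-algebraic attack in the style of \cref{theorem:minimallygloballyrigid}: fix a generic realization and, via a stress-matrix and rank argument, show that for a $(d+1)$-connected, $\mathcal{R}_{d+1}$-connected graph $G$ and any edge $e$, every pair of edges of $G-e$ lies in a common $\Rd$-circuit. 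This is precisely where the conjecture meets the other open problems of the paper, and I expect it to be the crux of any complete proof.

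Finally, one might hope to sidestep the connectivity case analysis altogether by inducting on $d$ through coning, using \cref{theorem:rigidconing} to transfer between $\Rd$ and $\mathcal{R}_{d+1}$. The trouble is that coning does not cleanly preserve $\mathcal{R}$-\emph{connectivity}: the apex edges it introduces may link pairs of edges that were previously in no common circuit, and a general $\mathcal{R}_{d+1}$-connected graph is not a cone. For these reasons I would treat coning only as a source of intuition rather than as a viable proof strategy, and would concentrate the effort on the $(d+1)$-connected case identified above.
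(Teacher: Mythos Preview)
Your proposal is not a proof, and you are right that it is not: \cref{conjecture:redundantlyMconnected} is stated in the paper as an open conjecture for $d\geq 3$, and the paper does \emph{not} prove it in general. There is therefore no ``paper's own proof'' to compare against. What the paper does establish is exactly the $d=2$ case (\cref{theorem:2dimredundantlyMconnected}), the equivalence with \cref{conjecture:eqv}, the special case where $G$ is additionally rigid in $\RR^{d+1}$, and the weak version that \emph{some} edge can be deleted while preserving $\Rd$-connectivity (via \cref{lemma:murty}). So your honest accounting of the obstacles is the appropriate response here.

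Your diagnosis of \emph{why} the $d=2$ argument does not extend is accurate and matches the paper's implicit position. The two genuine gaps you name are exactly the ones that block a direct generalization: (i) for $d\geq 3$ the dichotomy ``$(d+1)$-connected or has a $2$-separation'' fails, and there is no analogue of \cref{lemma:circuit2sum} or \cref{lemma:Mconnected2sum,lemma:redundantlyMconnected2sum} for separators of size $3,\dots,d$; and (ii) in the highly connected case the $d=2$ proof leans on \cref{theorem:2dimgloballyrigid} and \cref{bridgeconjspec}, both of which are unavailable for $d\geq 3$ (the latter is precisely the open \cref{bridgeconj}). Your suggested ``direct stress-matrix attack'' in the $(d+1)$-connected case is a reasonable instinct but is not something the paper carries out or even sketches; it would amount to new mathematics. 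One small correction to your base case: a graph consisting of a single edge is vacuously $\mathcal{R}_{d+1}$-connected (its matroid has one element), so the claim that $K_{d+3}$ is the \emph{only} $\mathcal{R}_{d+1}$-connected graph on at most $d+3$ vertices needs the qualifier ``with at least two edges''; this is harmless but worth stating cleanly.
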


We first show that \cref{conjecture:redundantlyMconnected} is equivalent to the following conjecture, which is a natural analogue of \cref{theorem:bridge}.

\begin{conjecture}\label{conjecture:eqv}
Let $d$ be a positive integer and let $G$ be an $\mathcal{R}_d$-connected graph. Let $e$ be an edge of $G$. If $G-e$ is not $\mathcal{R}_d$-connected, then $e$ is an $\mathcal{R}_{d+1}$-bridge in $G$. 
\end{conjecture}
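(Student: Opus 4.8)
The plan is to prove the contrapositive. So assume $G$ is $\Rd$-connected and that $e = uv$ is \emph{not} an $\mathcal{R}_{d+1}$-bridge in $G$, and show that $G - e$ is $\Rd$-connected. By definition, $e$ not being an $\mathcal{R}_{d+1}$-bridge means that $e$ lies in some $\mathcal{R}_{d+1}$-circuit $C$ of $G$. The idea is first to understand the $\Rd$-behaviour of this single circuit, and then to spread $\Rd$-connectivity from $C - e$ to all of $G - e$ using the $\Rd$-connectivity of $G$.

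The first observation is that $C$, being a circuit of a matroid, is $\mathcal{R}_{d+1}$-connected, so by \cref{theorem:Mconnecteddimensiondropping} it is $\Rd$-connected; in particular $e$ lies in some $\Rd$-circuit $C' \subseteq C$. The crucial step is then to show that $C - e$ is itself $\Rd$-connected. This is precisely the assertion that the $\mathcal{R}_{d+1}$-connected graph $C$ is redundantly $\Rd$-connected, i.e.\ \cref{conjecture:redundantlyMconnected} for $C$. For $d \le 2$ this input is available: the $d = 1$ case is elementary (via \cref{lemma:Mconnected2sum,lemma:redundantlyMconnected2sum}), and the $d = 2$ case is exactly \cref{theorem:2dimredundantlyMconnected} applied to the $\mathcal{R}_3$-circuit $C$. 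I expect this to be the main obstacle in general: for $d \ge 3$ there is no known structural or decomposition theory for $\mathcal{R}_{d+1}$-circuits comparable to the $2$-sum machinery behind \cref{theorem:2dimredundantlyMconnected}, so proving that an arbitrary $\mathcal{R}_{d+1}$-circuit is redundantly $\Rd$-connected appears genuinely hard, which is why the statement remains conjectural beyond dimension two.

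Granting that $C - e$ is $\Rd$-connected, let $K$ denote the $\Rd$-connected component of $G - e$ containing $C - e$, and write $\closure(\cdot)$ for the closure operator of $\mathcal{R}_d(G)$. I would finish by showing $K = E(G - e)$. Fix any edge $f \neq e$; since $G$ is $\Rd$-connected there is an $\Rd$-circuit $D$ of $G$ with $e, f \in D$. Because $C'$ is an $\Rd$-circuit through $e$ we have $e \in \closure(C' - e)$ with $C' - e \subseteq K$, whence $f \in \closure(D - f) \subseteq \closure\bigl((D - f - e) \cup (C' - e)\bigr)$. On the other hand $D - e$ is independent, so $f \notin \closure(D - f - e)$; consequently any $\Rd$-circuit through $f$ contained in $(D \cup C') - e$ must use an edge of $C' - e \subseteq K$. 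Such a circuit lies in $G - e$ and joins $f$ to $K$, so $f \in K$. As $f$ was arbitrary, $G - e$ is $\Rd$-connected, which proves the contrapositive — unconditionally for $d \le 2$, and for all $d$ conditional on \cref{conjecture:redundantlyMconnected}.
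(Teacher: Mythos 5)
Your proposal is correct and takes essentially the same approach as the paper, which likewise leaves the statement conjectural in general and proves exactly this reduction: take an $\mathcal{R}_{d+1}$-circuit $C$ through $e$, apply \cref{conjecture:redundantlyMconnected} to conclude $C-e$ is $\Rd$-connected, then attach every other edge $f$ via a circuit through $e$ and $f$ (the paper invokes the strong circuit elimination axiom where you phrase the same step with closure operators — a cosmetic difference). Your remark that the $d\le 2$ cases follow unconditionally (via \cref{theorem:2dimredundantlyMconnected} for $d=2$) is likewise implicit in the paper's results.
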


\begin{theorem}
\cref{conjecture:redundantlyMconnected,conjecture:eqv} are equivalent.
\end{theorem}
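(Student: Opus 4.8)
The plan is to prove the two implications separately. I would open with the standard matroidal fact used throughout: by the paper's definition of connectivity, a connected matroid on at least two elements has every element contained in a circuit (indeed in a common circuit with every other element), so it has no bridges; in particular an $\mathcal{R}_{d+1}$-connected graph has no $\mathcal{R}_{d+1}$-bridge.

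For the easy direction, \cref{conjecture:eqv} $\Rightarrow$ \cref{conjecture:redundantlyMconnected}, I would assume $G$ is $\mathcal{R}_{d+1}$-connected. By \cref{theorem:Mconnecteddimensiondropping} it is also $\Rd$-connected. Fix an arbitrary edge $e$. Since $G$ is $\mathcal{R}_{d+1}$-connected, $e$ lies in an $\mathcal{R}_{d+1}$-circuit and hence is not an $\mathcal{R}_{d+1}$-bridge. The contrapositive of \cref{conjecture:eqv}, applied to the $\Rd$-connected graph $G$ and the non-bridge $e$, then gives that $G - e$ is $\Rd$-connected. As $e$ was arbitrary, $G$ is redundantly $\Rd$-connected.

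The reverse implication \cref{conjecture:redundantlyMconnected} $\Rightarrow$ \cref{conjecture:eqv} is the substantive one, and I would argue by contraposition: let $G$ be $\Rd$-connected, let $e$ be an edge that is \emph{not} an $\mathcal{R}_{d+1}$-bridge, and show $G - e$ is $\Rd$-connected. Since $e$ is not an $\mathcal{R}_{d+1}$-bridge, it lies in some $\mathcal{R}_{d+1}$-circuit $C$ of $G$. The edge set of a circuit is a single circuit of the restricted matroid, so $C$ is $\mathcal{R}_{d+1}$-connected; applying the assumed \cref{conjecture:redundantlyMconnected} to $C$ yields that $C - e$ is $\Rd$-connected, while \cref{theorem:Mconnecteddimensiondropping} gives that $C$ itself is $\Rd$-connected. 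In particular there is an $\Rd$-circuit $C' \subseteq E(C)$ containing $e$ together with some edge of $C - e$.

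It then remains to show that every edge $f$ of $G - e$ lies in the same $\Rd$-connected component of $G - e$ as the edges of $C - e$; since $C - e$ is $\Rd$-connected this is immediate when $f \in E(C) - e$, so I assume $f \notin E(C)$. As $G$ is $\Rd$-connected, there is an $\Rd$-circuit $D$ of $G$ with $e, f \in D$. I would then apply strong circuit elimination to the $\Rd$-circuits $D$ and $C'$, which share $e$, taking $f \in D \setminus C'$ as the distinguished element: this produces an $\Rd$-circuit $D''$ with $f \in D'' \subseteq (D \cup C') - e$. Because $D$ is a circuit, $D - e$ is independent, so $D''$ cannot be contained in $D - e$; hence $D''$ must use an edge of $C' - e \subseteq E(C) - e$. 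This $\Rd$-circuit $D'' \subseteq G - e$ therefore links $f$ to $C - e$ within $G - e$, placing $f$ in the required component. As $f$ was arbitrary, $G - e$ is $\Rd$-connected, completing the contrapositive. The hard part will be exactly this last rerouting step: deleting a non-bridge edge can in general disconnect a matroid, and the crux is to exploit the local gluing provided by $C - e$ being $\Rd$-connected — through strong circuit elimination and the independence of $D - e$ — to force $G - e$ to remain connected.
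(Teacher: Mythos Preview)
Your proposal is correct and follows essentially the same route as the paper: both directions match, and in the substantive implication you, like the paper, pick an $\mathcal{R}_{d+1}$-circuit $C$ through $e$, use \cref{conjecture:redundantlyMconnected} to get $C-e$ $\Rd$-connected, and then link an arbitrary edge $f$ of $G-e$ to $C-e$ via strong circuit elimination against an $\Rd$-circuit in $C$ through $e$. Your write-up is in fact a bit more explicit than the paper's in two places: you separate out the trivial case $f\in E(C)-e$, and you spell out why the eliminated circuit must meet $C'-e$ (namely, because $D-e$ is independent), which the paper leaves implicit.
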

\begin{proof}
First, suppose that \cref{conjecture:eqv} holds and let $G$ be an $\mathcal{R}_{d+1}$-connected graph. By \cref{theorem:Mconnecteddimensiondropping}, $G$ is $\mathcal{R}_d$-connected. Since $G$ contains no $\mathcal{R}_{d+1}$-bridges, \cref{conjecture:eqv} implies that $G-e$ is $\mathcal{R}_d$-connected for every edge $e$ of $G$, as desired.

Now let us suppose that \cref{conjecture:redundantlyMconnected} holds and let $G$ be an $\mathcal{R}_d$-connected graph. We show that if some edge $e$ is not an $\mathcal{R}_{d+1}$-bridge in $G$, then $G-e$ is also $\mathcal{R}_d$-connected. 
The assumption on $e$ implies that there is an $\mathcal{R}_{d+1}$-circuit $C$ in $G$ that contains $e$. By \cref{conjecture:redundantlyMconnected}, $C-e$ is $\mathcal{R}_d$ connected. Thus, to show that $G-e$ is $\mathcal{R}_d$-connected, it suffices to show that for each edge $f$ of $G-e$, there is some edge $f'$ of $C-e$ such that $f$ and $f'$ are in the same connected component of $\mathcal{R}_d(G-e)$. But this is easy: since $G$ is $\mathcal{R}_d$-connected, there is an $\mathcal{R}_d$-circuit $C'$ in $G$ that contains $e$ and $f$, and since $C$ is $\mathcal{R}_d$-connected, there is some $\mathcal{R}_d$-circuit $C''$ in $C$ that contains $e$. By applying the strong circuit elimination axiom to $C',C'',f$ and $e$, we obtain an $\mathcal{R}_d$-circuit $C^*$ in $G-e$ that contains $f$ and also contains some edge of $C$. In particular, $f$ and $f^*$ are in the same connected component of $\mathcal{R}_d(G-e)$, as required. 
\end{proof}

The $d=1$ case of \cref{conjecture:redundantlyMconnected} follows from the fact that $\mathcal{R}_2$-connected graphs are redundantly rigid in $\RR^2$, and thus redundantly $2$-connected. A generalization of this argument shows that the conjecture is true in the special case when $G$ is rigid in $\RR^{d+1}$.

\begin{corollary}
If $G$ is rigid in $\RR^{d+1}$ and $\mathcal{R}_{d+1}$-connected, then $G$ is redundantly $\Rd$-connected.
\end{corollary}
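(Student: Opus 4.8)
The plan is to show the stronger intermediate fact that for \emph{every} edge $e$ of $G$, the graph $G-e$ is globally rigid in $\RR^d$; once this is established, \cref{theorem:Mconnected} immediately forces $G-e$ to be $\Rd$-connected, which is precisely redundant $\Rd$-connectivity. This is the natural generalization of the $d=1$ argument sketched just above the statement: there one combines "$\mathcal{R}_2$-connected $\Rightarrow$ redundantly rigid in $\RR^2$" with "rigid in $\RR^2$ $\Rightarrow$ $2$-connected", and here the implication "rigid in $\RR^2$ $\Rightarrow$ $\mathcal{R}_1$-connected" is replaced by the chain "rigid in $\RR^{d+1}$ $\Rightarrow$ globally rigid in $\RR^d$ (\cref{theorem:dimensiondropping}) $\Rightarrow$ $\Rd$-connected (\cref{theorem:Mconnected})".

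First I would establish that $G$ is redundantly rigid in $\RR^{d+1}$. Since $G$ is $\mathcal{R}_{d+1}$-connected, every pair of edges lies in a common $\mathcal{R}_{d+1}$-circuit; in particular no edge is an $\mathcal{R}_{d+1}$-bridge, so $r_{d+1}(G-e) = r_{d+1}(G)$ for each edge $e$. Because $G$ is rigid in $\RR^{d+1}$, \cref{theorem:gluck} gives $r_{d+1}(G) = (d+1)|V| - \binom{d+2}{2}$, and the equality of ranks then shows, again by \cref{theorem:gluck}, that $G-e$ is rigid in $\RR^{d+1}$. At this point I would also record the bookkeeping needed below: an $\mathcal{R}_{d+1}$-connected graph contains an $\mathcal{R}_{d+1}$-circuit, whose smallest instance is $K_{d+3}$, so $|V| \geq d+3$ and in particular $|V| \geq d+2$.

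With redundant rigidity in $\RR^{d+1}$ in hand, I would fix an arbitrary edge $e$ and apply \cref{theorem:dimensiondropping} to $G-e$, obtaining that $G-e$ is globally rigid in $\RR^d$; since $|V| \geq d+2$, \cref{theorem:Mconnected} applies to $G-e$ and yields that $G-e$ is $\Rd$-connected. As $e$ was arbitrary, $G$ is redundantly $\Rd$-connected. Since each step is a direct invocation of a cited result, I do not expect a genuine obstacle; the only points requiring care are verifying that $\mathcal{R}_{d+1}$-connectivity excludes $\mathcal{R}_{d+1}$-bridges (so that rigidity is preserved under single-edge deletion) and confirming the vertex-count bound $|V| \geq d+2$ that legitimizes the use of \cref{theorem:Mconnected}.
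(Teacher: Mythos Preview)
Your proof is correct and follows exactly the same route as the paper: $\mathcal{R}_{d+1}$-connectivity rules out $\mathcal{R}_{d+1}$-bridges, so $G$ is redundantly rigid in $\RR^{d+1}$; then \cref{theorem:dimensiondropping} gives redundant global rigidity in $\RR^d$, and \cref{theorem:Mconnected} upgrades this to redundant $\Rd$-connectivity. You simply spell out in more detail the rank computation via \cref{theorem:gluck} and the vertex-count check $|V| \geq d+2$ that the paper leaves implicit.
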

\begin{proof}
Being $\mathcal{R}_{d+1}$-connected implies that there are no $\mathcal{R}_{d+1}$-bridges in $G$, so $G$ is redundantly rigid in $\RR^{d+1}$. Then by \cref{theorem:dimensiondropping} $G$ is redundantly globally rigid in $\RR^d$, and thus, by \cref{theorem:Mconnected}, redundantly $\Rd$-connected. 
\end{proof}

We can also prove the following weaker version of \cref{conjecture:redundantlyMconnected}: if a graph is $\mathcal{R}_{d+1}$-connected, then 
$G-e$ is $\Rd$-connected for {\em some} edge $e$ of $G$.
We shall use the following result. 

\begin{lemma}\label{lemma:redundantlybridgeless}
Suppose that the graph $G = (V,E)$ contains no $\mathcal{R}_{d+1}$-bridges. Then $G-v$ contains no $\Rd$-bridges, for every vertex $v \in V$.
\end{lemma}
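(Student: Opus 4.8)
The plan is to convert the statement about $\Rd$-bridges in $G-v$ into a statement about $\mathcal{R}_{d+1}$-bridges in a suitable supergraph of $G$, so that the hypothesis can be applied directly. The main tool is the coning correspondence recorded after \cref{theorem:rigidconing}: for any graph $H$ and edge $e$ of $H$, the edge $e$ is an $\Rd$-bridge in $H$ if and only if $e$ is an $\mathcal{R}_{d+1}$-bridge in the cone $H^w$. So I would fix a vertex $v \in V$ and an edge $e$ of $G-v$, and aim to show that $e$ is not an $\Rd$-bridge in $G-v$, i.e.\ that $e$ lies in some $\Rd$-circuit of $G-v$.

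Applying the coning correspondence to $H = G-v$, the edge $e$ is an $\Rd$-bridge in $G-v$ exactly when it is an $\mathcal{R}_{d+1}$-bridge in $(G-v)^w$. The key combinatorial observation is that, after identifying the apex $w$ of the cone with the deleted vertex $v$, the cone $(G-v)^w$ becomes the graph $G^+$ obtained from $G$ by joining $v$ to all of its non-neighbours (the ``local cone'' at $v$ already used in the proof of \cref{theorem:bridge}). In particular $G$ is a subgraph of $G^+$, and the edge $e$, being disjoint from $v$, is unaffected by the identification.

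Now I would invoke the hypothesis. Since $G$ contains no $\mathcal{R}_{d+1}$-bridges, $e$ is contained in some $\mathcal{R}_{d+1}$-circuit $C \subseteq G$. Because being an $\mathcal{R}_{d+1}$-circuit is an intrinsic property of a subgraph (the rigidity matroid of a subgraph is the restriction of the ambient rigidity matroid, so circuits of $G$ remain circuits of any graph containing $G$), the subgraph $C$ is still an $\mathcal{R}_{d+1}$-circuit of the larger graph $G^+ \cong (G-v)^w$. Hence $e$ lies in an $\mathcal{R}_{d+1}$-circuit of $(G-v)^w$, so it is not an $\mathcal{R}_{d+1}$-bridge there, and by the coning correspondence $e$ is not an $\Rd$-bridge in $G-v$. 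As $v$ and $e$ were arbitrary, this yields the lemma.

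I do not expect a serious obstacle here: the entire argument rests on correctly setting up the coning translation and on the monotonicity of circuits under passage to supergraphs. The single point deserving care is the identification $(G-v)^w \cong G^+$ together with the verification that $G$ (and hence the circuit $C$ through $e$) sits inside $G^+$; once this bookkeeping is in place, the no-$\mathcal{R}_{d+1}$-bridge hypothesis applies immediately.
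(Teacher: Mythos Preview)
Your argument is correct and is essentially the same as the paper's: both use the coning correspondence to translate $\Rd$-bridges in $G-v$ into $\mathcal{R}_{d+1}$-bridges in the local cone $G^v \cong (G-v)^w$, and then use that $G$ is a subgraph of $G^v$ to transfer the no-bridge hypothesis. The paper phrases this as a two-line contrapositive, but the content is identical.
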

\begin{proof}
If there were an $\Rd$-bridge $e \in E$ in $G-v$, then from \cref{theorem:rigidconing} it would follow that $e$ is an $\mathcal{R}_{d+1}$-bridge in $G^v$ where $G^v$ is the ``local cone'' $G + \{uv: u \in V\}$. But then $e$ would also be an $\mathcal{R}_{d+1}$-bridge in $G$, a contradiction.
\end{proof}

We also need the following result of Murty. 
We say that a matroid $\mathcal{M}$ on ground set $E$ is \emph{minimally connected} if it is connected but the restriction $M \setminus \{e\}$ is not connected, for all $e \in E$. A graph $G$ is \emph{minimally $\Rd$-connected} if $\mathcal{R}_d(G)$ is minimally connected.

\begin{lemma}\label{lemma:murty}\cite[Lemma 3.1]{Murty}
Let $\mathcal{M}$ be a minimally connected matroid of rank at least $2$. Then $\mathcal{M}$ contains a pair $\{e,f\}$ of non-bridges such that $e$ is a bridge in the restriction $\mathcal{M} \setminus \{f\}$ and $f$ is a bridge in $\mathcal{M} \setminus \{e\}$. 
\end{lemma}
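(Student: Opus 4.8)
The plan is to recast the conclusion in terms of cocircuits. An element is a bridge in a matroid exactly when it is a coloop, i.e.\ lies in no circuit, so the requirement that $e$ be a bridge in $\mathcal{M}\setminus f$ and $f$ be a bridge in $\mathcal{M}\setminus e$ says that every circuit of $\mathcal{M}$ through one of $e,f$ passes through the other. Since $\mathcal{M}$ is connected with at least two elements, neither $e$ nor $f$ is a coloop of $\mathcal{M}$, and the condition above is then equivalent to $\{e,f\}$ being a cocircuit of $\mathcal{M}$: if $f$ is a coloop of $\mathcal{M}\setminus e$ then $\rk(E\setminus\{e,f\})=\rk(\mathcal{M})-1$, while $e,f$ being non-bridges forces $E\setminus\{e,f\}$ to be a hyperplane, so $\{e,f\}$ is a cocircuit; conversely, the orthogonality of circuits and cocircuits (no circuit meets a cocircuit in exactly one element) recovers both bridge conditions from the cocircuit. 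Hence it suffices to find a single element $e$ such that $\mathcal{M}\setminus e$ has a coloop: that coloop $f$ then yields the required pair $\{e,f\}$.

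Next I would record two reductions. As $\mathcal{M}$ is connected with at least two elements it is loopless, and it is in fact simple: if $\{g,h\}$ were a circuit, then since no circuit properly contains another, every circuit through $h$ other than $\{g,h\}$ avoids $g$, so in $\mathcal{M}\setminus g$ every element still lies in a common circuit with $h$; thus $\mathcal{M}\setminus g$ would be connected, contradicting minimal connectivity. Consequently, in any deletion $\mathcal{M}\setminus e$ a component consisting of a single element is a coloop, and every component with at least two elements has rank at least $2$. (One could also invoke the classical fact that for a connected matroid at least one of $\mathcal{M}\setminus e,\ \mathcal{M}/e$ is connected, which here makes every contraction $\mathcal{M}/e$ connected, though I do not expect to need it.)

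The heart of the proof is an extremal argument. Among all pairs $(e,S)$ with $S$ a connected component of $\mathcal{M}\setminus e$, choose one minimizing $|S|$. If $|S|=1$, that element is a coloop of $\mathcal{M}\setminus e$ and we are done by the reformulation. Otherwise $|S|\geq 2$, so $\mathcal{M}|S$ is a connected matroid of rank at least $2$, and $S$ is attached to the remainder $R=E\setminus(S\cup\{e\})$ only through $e$: every circuit meeting both $S$ and $R$ contains $e$, and connectivity of $\mathcal{M}$ forces $R\neq\emptyset$. I would then delete a carefully chosen element $g\in S$ and analyse the components of $\mathcal{M}\setminus g$, aiming to produce a component strictly smaller than $S$ and thereby contradict the minimality of $|S|$.

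The main obstacle is exactly this last step: controlling how the components of $\mathcal{M}\setminus g$ interact across the one-element separator $\{e\}$. The danger is that deleting $g$ from inside $S$ breaks $S$ apart, yet the pieces all re-merge with $R\cup\{e\}$ through $e$, so that no small component survives. I would handle this through the submodular connectivity function $\lambda(X)=\rk(X)+\rk(E\setminus X)-\rk(\mathcal{M})$: minimal connectivity encodes ``$\mathcal{M}\setminus e$ disconnected'' as a partition of $E\setminus\{e\}$ of $\lambda$-value $0$, and submodularity of $\lambda$ lets one compare the separation induced by deleting $e$ with the one induced by deleting $g\in S$, forcing a separation of $\mathcal{M}\setminus g$ that isolates a proper nonempty subset of $S$. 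Making this comparison precise, and in particular choosing $g$ so that the new separation genuinely refines the old one, is the crux of the argument; everything else is routine bookkeeping.
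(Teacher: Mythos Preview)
The paper does not prove this lemma at all: it is quoted verbatim from Murty's paper and used as a black box, with the remark immediately afterwards that the assertion is equivalent to the existence of a two-element cocircuit. So there is no ``paper's own proof'' to compare against; your proposal is an attempt to supply what the paper omits.

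Your reformulation is entirely sound. The equivalence between the bridge conditions and $\{e,f\}$ being a cocircuit is correct, and your simplicity argument (no parallel pair in a minimally connected matroid) is clean and complete.

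The genuine gap is the one you yourself flag: the extremal step. You choose $(e,S)$ with $S$ a smallest component of some $\mathcal{M}\setminus e$, assume $|S|\geq 2$, and then want to delete some $g\in S$ and exhibit a strictly smaller component of $\mathcal{M}\setminus g$. You do not carry this out; you only say that submodularity of $\lambda$ ``lets one compare'' the two separations. That is not yet an argument. Concretely, the difficulty is that $\mathcal{M}|(R\cup\{e\})$ is connected (this follows from the standard fact that restricting a connected matroid to a component of $\mathcal{M}\setminus e$ together with $e$ keeps it connected), so after deleting $g\in S$ the whole block $R\cup\{e\}$ sits inside a single component of $\mathcal{M}\setminus g$; the pieces of $S\setminus\{g\}$ may then all reattach to that block through circuits using $e$. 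Your submodular inequality $\lambda(S\cap T)+\lambda(S\cup T)\leq 2$ is correct for any component $T$ of $\mathcal{M}\setminus g$, but by itself it does not rule out the degenerate cases $S\cap T=\emptyset$ or $S\cup T=E\setminus\{g\}$, which are exactly the cases where no smaller component is produced. To finish along these lines you would need either an induction on a smaller connected minor (for instance $\mathcal{M}|(S\cup\{e\})$, after checking it inherits enough of the minimal-connectivity hypothesis) or a more careful choice of $g$ together with a second application of submodularity to force $\emptyset\subsetneq S\cap T\subsetneq S$. As it stands, the proposal is a reasonable outline with the decisive step missing.
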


Using terminology from matroid theory, \cref{lemma:murty} asserts that minimally connected matroids of rank at least $2$ contain a cocircuit of cardinality two.

\begin{theorem}
Let $G$ be a graph. If $G$ is $\mathcal{R}_{d+1}$-connected, then there is an edge $e$ of $G$ such that $G-e$ is $\Rd$-connected.
\end{theorem}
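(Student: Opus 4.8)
The plan is to argue by contradiction, combining Murty's structural result on minimally connected matroids (\cref{lemma:murty}) with the vertex-deletion statement of \cref{lemma:redundantlybridgeless}. The guiding idea is that if no single edge could be deleted while preserving $\mathcal{R}_d$-connectivity, then $\mathcal{R}_d(G)$ would be \emph{minimally} connected, and the size-two cocircuit supplied by \cref{lemma:murty} would let us produce an $\mathcal{R}_d$-bridge in a vertex-deleted subgraph, contradicting \cref{lemma:redundantlybridgeless}.

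First I would record the two easy consequences of the hypothesis. Since $G$ is $\mathcal{R}_{d+1}$-connected, \cref{theorem:Mconnecteddimensiondropping} gives that $G$ is $\mathcal{R}_d$-connected. Moreover, being $\mathcal{R}_{d+1}$-connected, $G$ contains an $\mathcal{R}_{d+1}$-circuit and hence has at least two edges; since the smallest $\mathcal{R}_d$-circuit, $K_{d+2}$, has at least three edges, the matroid $\mathcal{R}_d(G)$ has no two-element circuits, so a connected $\mathcal{R}_d(G)$ with at least two edges has rank at least $2$. Now suppose, for contradiction, that $G-e$ is \emph{not} $\mathcal{R}_d$-connected for every edge $e$. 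Then $\mathcal{R}_d(G)$ is minimally connected of rank at least $2$, and \cref{lemma:murty} produces a pair $\{e,f\}$ of non-bridges with $f$ an $\mathcal{R}_d$-bridge in $\mathcal{R}_d(G)\setminus\{e\} = \mathcal{R}_d(G-e)$; equivalently, $f$ lies in no $\mathcal{R}_d$-circuit of $G-e$.

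The crucial step is then to transfer this edge-deletion statement to a vertex-deletion one. Since $G$ is simple and $e\neq f$, the edge $e$ has an endpoint $v$ not incident to $f$. Then $G-v$ is a subgraph of $G-e$ that still contains $f$, so every $\mathcal{R}_d$-circuit of $G-v$ is in particular an $\mathcal{R}_d$-circuit of $G-e$; as $f$ lies in none of the latter, it lies in no $\mathcal{R}_d$-circuit of $G-v$, i.e.\ $f$ is an $\mathcal{R}_d$-bridge in $G-v$. On the other hand, $G$ is $\mathcal{R}_{d+1}$-connected, hence has no $\mathcal{R}_{d+1}$-bridges, so \cref{lemma:redundantlybridgeless} guarantees that $G-v$ has no $\mathcal{R}_d$-bridges. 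This contradiction completes the proof, so some edge $e$ with $G-e$ $\mathcal{R}_d$-connected must exist.

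The step I expect to be the main obstacle is exactly this passage between edge deletion and vertex deletion: \cref{lemma:murty} yields information about $\mathcal{R}_d(G-e)$, whereas \cref{lemma:redundantlybridgeless} speaks only about vertex-deleted subgraphs $G-v$. The simplicity of $G$ is precisely what makes the bridge between them work, since it lets us choose an endpoint $v$ of $e$ avoiding $f$, ensuring $G-v\subseteq G-e$ retains $f$ and inherits the property that no $\mathcal{R}_d$-circuit passes through $f$. If one wished to allow multigraphs, the only failure case would be $e$ and $f$ parallel, which would require a separate (and easy) argument.
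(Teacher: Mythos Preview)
Your proof is correct and follows essentially the same approach as the paper: use \cref{theorem:Mconnecteddimensiondropping} to get $\mathcal{R}_d$-connectivity, assume minimality for a contradiction, apply Murty's lemma to obtain the pair $\{e,f\}$, and then pass from the edge deletion $G-e$ to the vertex deletion $G-v$ in order to contradict \cref{lemma:redundantlybridgeless}. You are in fact slightly more careful than the paper in verifying the rank-$\geq 2$ hypothesis needed for \cref{lemma:murty}.
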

\begin{proof}
By \cref{theorem:Mconnecteddimensiondropping}, $G$ is $\Rd$-connected. Suppose for a contradiction that $G$ is minimally $\Rd$-connected. Then \cref{lemma:murty} applies and thus there is a pair $\{e,f\}$ of edges such that $f$ is an $\Rd$-bridge in $G-e$. Let $v$ be a vertex incident to $e$ but not to $f$. Then $f$ is an $\Rd$-bridge in $G-v$, contradicting \cref{lemma:redundantlybridgeless}.
\end{proof}

\section{Globally rigid subgraphs in dense graphs}\label{section:globallyrigidsubgraphs}

In this section we investigate whether sufficiently dense graphs always have ``nontrivial'' globally rigid subgraphs. In this context, we shall say that a graph that is globally rigid in $\RR^d$ is \emph{nontrivial} if it has at least $d+2$ vertices. More precisely, we consider the following problem: let $d$ be a fixed positive integer. Is it true that there is some integer $k_d$ such that for every graph $G = (V,E)$, if $|E| \geq k_d|V|$, then $G$ contains a nontrivial globally rigid subgraph in $\RR^d$?

The existence of nontrivial globally rigid subgraphs has recently been investigated in \cite{MLT}. In that paper, the authors defined the following graph parameter: given a graph $G$, let $\grn(G)$ denote the largest $d$ such that $G$ has a nontrivial globally rigid subgraph in $\RR^d$ (or $0$ if there is no such $d$). With this terminology, the above question amounts to giving a lower bound on $\grn$ in terms of the ``density'' $\frac{|E|}{|V|}$ of the graph. By the results in \cite{MLT}, this would also give a lower bound on the so-called maximum likelihood threshold of the graph.

We shall concentrate on the $d=2$ case. We first show how $k_2 \leq 9$ follows from previous results of Mader and Jackson and Jordán, respectively. Then we prove an analogue of Mader's theorem and use it to show that $k_2 \leq 5$.
We note that whether the same approach generalizes to the $d \geq 3$ case depends on the validity of the well-known ``sufficient connectivity conjecture''; see the discussion at the end of the section.

Our first ingredient is the following theorem of Mader regarding highly connected subgraphs in sufficiently dense graphs.

\begin{theorem} \cite{Mader}
\label{thm:mader}
Let $G=(V,E)$ be a graph and suppose that for some $k \geq 2$
we have $|V| \geq 2k-1$ and
\begin{equation*}
|E| > (2k-3)(|V|-k-1).
\end{equation*}
Then $G$ contains a $k$-connected subgraph.
\end{theorem}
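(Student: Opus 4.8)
The plan is to prove the statement by induction on $|V|$, following Mader's original strategy. Write $h(m) = (2k-3)(m-k-1)$ for the threshold appearing on the right-hand side, and record the two structural properties of $h$ that drive the whole argument: first, $h(m) - h(m-1) = 2k-3$, so that deleting a vertex of small degree preserves the edge-density hypothesis; and second, $h(m_1) + h(m_2) \leq h(m)$ whenever $m_1 + m_2 \leq m + (k-1)$, which is exactly the configuration produced by a separation of order at most $k-1$. The base of the induction is the smallest admissible case $|V| = 2k-1$, where the density hypothesis forces $G$ to be complete; since $K_{2k-1}$ is $(2k-2)$-connected and $2k-2 \geq k$ for $k \geq 2$, it is itself $k$-connected.

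For the inductive step I assume $|V| \geq 2k$ and that the claim holds for all graphs with fewer vertices. If $G$ contains a vertex $v$ of degree at most $2k-3$, then $G - v$ has $|V| - 1 \geq 2k-1$ vertices and, by the first property of $h$, still more than $h(|V|-1)$ edges; by induction $G-v$ contains a $k$-connected subgraph, which is also a subgraph of $G$. Hence I may assume that every vertex of $G$ has degree at least $2k-2$. If $G$ is itself $k$-connected we are done, so suppose it is not.

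In the remaining case $G$ has a vertex separator $S$ with $|S| \leq k-1$, yielding a decomposition $G = G_1 \cup G_2$ with $V(G_1) \cap V(G_2) = S$ and both $V(G_1) \setminus S$ and $V(G_2) \setminus S$ nonempty. Because $S$ separates the two sides, every vertex of $V(G_i) \setminus S$ retains its entire neighbourhood inside $G_i$, so each $G_i$ contains a vertex of degree at least $2k-2$ and therefore has at least $2k-1$ vertices; moreover each $G_i$ omits a vertex of the other side and so has strictly fewer than $|V|$ vertices. The heart of the argument is the edge count: from inclusion–exclusion, $|E(G)| = |E(G_1)| + |E(G_2)| - |E(G[S])| \leq |E(G_1)| + |E(G_2)|$, while $|V(G_1)| + |V(G_2)| = |V| + |S| \leq |V| + (k-1)$. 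Thus if both $G_i$ failed the density hypothesis, i.e.\ $|E(G_i)| \leq h(|V(G_i)|)$, then the second property of $h$ would give $|E(G)| \leq h(|V(G_1)|) + h(|V(G_2)|) \leq h(|V|)$, contradicting the hypothesis on $G$. Hence some $G_i$ satisfies the hypothesis on fewer vertices, and induction supplies a $k$-connected subgraph of $G_i \subseteq G$.

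The main obstacle is to make the separation case feed back into the induction. A priori the dense side produced by a separation could be too small for the induction hypothesis to apply; this is precisely why the minimum-degree reduction must be carried out first, as it forces every vertex to have degree at least $2k-2$ and hence forces each $G_i$ to contain at least $2k-1$ vertices. Once this is secured, the only genuine computation is the superadditivity inequality $h(m_1)+h(m_2) \leq h(m)$ for $m_1 + m_2 \leq m + (k-1)$, after which the inclusion–exclusion bound $|E(G)| \leq |E(G_1)| + |E(G_2)|$ closes the argument; note that the overlap edges $E(G[S])$ only \emph{help}, since they are subtracted.
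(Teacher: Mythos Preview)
The paper does not prove this theorem; it is quoted from Mader, and the paper's own contribution is the mixed-connectivity analogue (\cref{maderext}), which it proves by exactly the strategy you outline --- take a minimal subgraph satisfying the hypotheses, observe that it has high minimum degree, and derive a contradiction from a small separation. Your induction on $|V|$ is the same argument in different clothing.

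There is, however, a genuine gap in your base case, and it stems from a typo in the paper's statement rather than from your method. You claim that for $|V|=2k-1$ the hypothesis $|E| > h(2k-1) = (2k-3)(k-2)$ forces $G$ to be complete. This is false: already for $k=2$ the threshold is $0$, so the path $P_3$ on three vertices satisfies $|E|>0$ yet contains no $2$-connected subgraph, and the gap $(2k-3)(k-2) < \binom{2k-1}{2}-1$ persists for every $k\ge 2$. The statement as printed is therefore not true. Mader's actual bound is $(2k-3)(|V|-k+1)$, not $(2k-3)(|V|-k-1)$. With the corrected function $\tilde h(m) = (2k-3)(m-k+1)$ one has $\tilde h(2k-1) = (2k-3)k = \binom{2k-1}{2} - 1$, so your base case becomes valid, and your superadditivity inequality $\tilde h(m_1)+\tilde h(m_2) \le \tilde h(m)$ for $m_1+m_2 \le m+(k-1)$ holds with equality rather than with slack. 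Your proof is correct for the intended theorem; it simply cannot establish the one literally written, because that one is false.
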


To use \cref{thm:mader}, we need a result connecting sufficiently high vertex connectivity with global rigidity in $\RR^2$. This is provided by the following result. A graph is \emph{redundantly globally rigid} in $\RR^d$ if it remains globally rigid in $\RR^d$ after deleting any edge.

\begin{theorem} \cite{jackson.jordan_2005} \label{theorem:6connected}
If $G$ is $6$-connected, then $G$ is redundantly globally rigid in $\R^2$.
\end{theorem}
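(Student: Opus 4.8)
The plan is to deduce the statement from the combinatorial characterization of planar global rigidity in \cref{theorem:2dimgloballyrigid}, reducing everything to a purely connectivity/rigidity question. By definition, $G$ is redundantly globally rigid in $\R^2$ exactly when $G-e$ is globally rigid in $\R^2$ for every edge $e$. Since a $6$-connected graph has at least seven vertices, each $G-e$ has at least four vertices, so \cref{theorem:2dimgloballyrigid} applies and tells us that $G-e$ is globally rigid if and only if it is $3$-connected and redundantly rigid in $\R^2$. Unwinding the definition of redundant rigidity once more, the whole theorem is equivalent to the following two assertions: (i) $G-e$ is $3$-connected for every edge $e$, and (ii) $G-e-f$ is rigid in $\R^2$ for every pair of edges $e,f$.

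Assertion (i) is immediate and comes with slack. Deleting one edge lowers vertex-connectivity by at most one, so $\kappa(G)\geq 6$ gives that $G-e$ is $5$-connected and $G-e-f$ is $4$-connected; in particular (i) holds, and, more importantly, $G-e-f$ is still highly connected, which is what I would exploit for (ii). Thus the entire content of the theorem lies in assertion (ii): a $6$-connected graph stays rigid in $\R^2$ after the removal of any two edges.

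To prove (ii) I would argue by contradiction, using the rigid-component structure underlying the Lov\'asz--Yemini theory of the planar rigidity matroid. Suppose $G-e-f$ is not rigid. Then $G-e-f$ has at least two maximal rigid subgraphs (its \emph{rigid components}), and any two rigid components share at most one vertex. Because $G-e-f$ is $4$-connected it has no cut vertex, which forces these components to overlap in a genuinely $2$-connected pattern; tracing through how a single rigid component meets the rest of the graph, the vertices shared along such a decomposition form a separating set of $G-e-f$ whose size is controlled by the rank deficiency. Adding back the at most two endpoints of $e$ and $f$ that cross this separation then turns it into a separating set of $G$. The main obstacle is the sharp bookkeeping in this last step: one must show, via the Lov\'asz--Yemini rank formula $r_2(H)=\min\sum_i(2|V(F_i)|-3)$ taken over covers $\{F_i\}$ of $E(H)$, that the separator produced in $G-e-f$ together with the two recovered endpoints has total size at most five, contradicting $6$-connectivity. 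The constant $6$ is precisely calibrated so that this works: it is exactly the amount of connectivity needed to survive both the passage from a deficient cover to a separating set and the re-insertion of the two deleted edges, and a naive separator count is not by itself sharp enough, so the finer structure of a minimizing cover is essential.

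With (ii) in hand, the conclusion is immediate: for every edge $e$ the graph $G-e$ is $3$-connected and redundantly rigid in $\R^2$, hence globally rigid in $\R^2$ by \cref{theorem:2dimgloballyrigid}, and therefore $G$ is redundantly globally rigid in $\R^2$, as claimed.
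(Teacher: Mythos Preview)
The paper does not prove \cref{theorem:6connected}; it is quoted from \cite{jackson.jordan_2005} as a black box, so there is no in-paper argument to compare against. What can be compared is your proposal with the actual content of the cited result.

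Your reduction is correct and cleanly stated: via \cref{theorem:2dimgloballyrigid}, the theorem is equivalent to the single assertion that $G-e-f$ is rigid in $\R^2$ for every pair of edges $e,f$ of a $6$-connected graph $G$ (the $3$-connectivity of $G-e$ being trivial). But this assertion \emph{is} the theorem --- it is exactly the Lov\'asz--Yemini result with the Jackson--Jord\'an refinement to two deleted edges --- and you do not prove it. You describe a strategy (rigid components, the Lov\'asz--Yemini rank formula, extracting a separator in $G-e-f$ and enlarging it by the endpoints of $e,f$), and then explicitly flag the ``sharp bookkeeping'' as ``the main obstacle'' and say that ``the finer structure of a minimizing cover is essential'' without supplying any of it. What you have written is a plan for a proof, not a proof.

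The gap is substantive, not cosmetic. Turning a deficient cover $\{X_i\}$ with $\sum_i(2|X_i|-3)<2|V|-3$ into a vertex separator of bounded size is precisely the work done in \cite{lovasz.yemini_1982} and sharpened in \cite{jackson.jordan_2005}; it requires controlling how the sets $X_i$ of a minimizing cover overlap (the ``hinge'' vertices) and a counting argument that is where the constant $6$ actually appears. None of that analysis is present here, so your assertion (ii) --- and hence the theorem --- remains unproved. Your outline is on the right track for how the original proof goes, but the paragraph you wrote for (ii) would need to be replaced by several pages of the cited paper.
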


Combining the $k=6$ case of \cref{thm:mader} with \cref{theorem:6connected}, we get the following.

\begin{corollary}\label{corollary:globallyrigidsubgraphweak}
If $G = (V,E)$ is a graph on at least $11$ vertices such that 
\begin{equation*}
|E| > 9|V| - 63,
\end{equation*}
then $G$ contains a (redundantly) globally rigid subgraph in $\RR^2$ on at least $7$ vertices. \qed
\end{corollary}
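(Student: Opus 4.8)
The plan is to combine the two cited results directly, with the numerical hypotheses chosen precisely so that Mader's theorem applies at $k = 6$. First I would instantiate \cref{thm:mader} with $k = 6$. The connectivity threshold $|V| \geq 2k - 1$ becomes $|V| \geq 11$, and the edge bound $|E| > (2k-3)(|V| - k - 1)$ becomes $|E| > 9(|V| - 7) = 9|V| - 63$. These are exactly the two hypotheses assumed in the statement, so \cref{thm:mader} produces a $6$-connected subgraph $H$ of $G$.

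Next I would record that $H$ has at least seven vertices. This is immediate from the definition of $k$-connectivity used in the paper, since a $k$-connected graph has at least $k+1$ vertices; with $k = 6$ this gives $|V(H)| \geq 7$. In particular $H$ is \emph{nontrivial} in the sense of this section.

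Finally I would apply \cref{theorem:6connected} to $H$. Being $6$-connected, $H$ is redundantly globally rigid in $\RR^2$, and hence globally rigid in $\RR^2$. Thus $H$ is a (redundantly) globally rigid subgraph of $G$ on at least seven vertices, as required.

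There is essentially no obstacle here: the entire content of the corollary is packaged in the two cited theorems, and the only real task is the bookkeeping that the constants line up, namely that $2k - 1$ and $(2k-3)(|V| - k - 1)$ specialize to $11$ and $9|V| - 63$ at $k = 6$. The reason the paper does not stop here is that this clean argument routes through high vertex-connectivity and so yields only the comparatively weak density bound $9|V|$; lowering the constant to $5|V|$ requires replacing \cref{theorem:6connected} by a more direct rigidity construction, which is the genuinely substantive part of the section.
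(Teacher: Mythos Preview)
Your proposal is correct and follows exactly the paper's approach: the corollary is stated with a \qed and the preceding sentence explicitly says it is obtained by ``combining the $k=6$ case of \cref{thm:mader} with \cref{theorem:6connected}.'' Your write-up simply fills in the arithmetic and the observation that a $6$-connected graph has at least seven vertices, which is precisely the intended argument.
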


The bound given in \cref{corollary:globallyrigidsubgraphweak} is far from being tight. 
In order to improve it, we prove a result similar to \cref{thm:mader} in the setting of mixed connectivity, which we introduce next.

Let $G=(V,E)$ be a graph. For a pair $S\subseteq V$ and $F\subseteq E$ we say that
$(S,F)$ is a \emph{mixed cut} of $G$ if $G-S-F$ is disconnected. A graph $G$ is called \emph{mixed $k$-connected}
if
\begin{equation*}
2|S|+|F| \geq k
\end{equation*}
holds for every mixed cut $(S,F)$ of $G$.
Note that $k$-connected graphs are mixed $k$-connected and mixed $k$-connected graphs are $\lceil \frac{k}{2} \rceil$-connected.

\cref{theorem:6connected} has the following strengthening in terms of mixed connectivity.

\begin{theorem} \cite{JJmixed}
\label{thm:JJb}
If $G$ is mixed $6$-connected, then $G$ is redundantly globally rigid in $\R^2$.
\end{theorem}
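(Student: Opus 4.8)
The plan is to reduce the statement to two separate claims about $G$ with one or two edges deleted, and then to establish each using mixed connectivity together with the combinatorial machinery already available. By \cref{theorem:2dimgloballyrigid}, a graph on at least four vertices is globally rigid in $\R^2$ precisely when it is $3$-connected and redundantly rigid, so it suffices to prove that for every edge $e$ of $G$ the graph $G-e$ is (A) $3$-connected and (B) redundantly rigid in $\R^2$; the latter amounts to showing that $G-e-f$ is rigid in $\R^2$ for every further edge $f$. I would first record that mixed $6$-connectivity already forces considerable local density: isolating a single vertex $v$ by deleting the edges incident to it is a mixed cut $(\emptyset,F)$ with $|F|=\deg(v)$, so $\deg(v)\ge 6$; in particular $|V|\ge 7$ and $|E|\ge 3|V|$, comfortably more than the $2|V|-1$ edges that $G-e-f$ must have to stand a chance of being rigid.

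For claim (A), suppose $G-e$ had a vertex cut $S$ with $|S|\le 2$. Then $G-S-e$ is disconnected, so $(S,\{e\})$ is a mixed cut of $G$ with $2|S|+|\{e\}|\le 5<6$, contradicting mixed $6$-connectivity. Hence $G-e$ is $3$-connected for every edge $e$. The same bookkeeping shows that deleting any two edges $e,f$ leaves a mixed $4$-connected (hence $2$-connected) graph, since a mixed cut $(S,F)$ of $G-e-f$ gives the mixed cut $(S,F\cup\{e,f\})$ of $G$; I will use this to guarantee that $H:=G-e-f$ is connected in the next step.

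Claim (B) is the substantial part. To show that $H$ is rigid, I would invoke the Lov\'asz--Yemini formula for the rank of the generic two-dimensional rigidity matroid, $r_2(H)=\min_{\mathcal X}\sum_{X\in\mathcal X}(2|X|-3)$, where the minimum ranges over all covers $\mathcal X$ of the edge set of $H$ by vertex subsets of size at least two. Since the single cover $\{V\}$ has value $2|V|-3$, it suffices to prove that no cover does better. Taking a cover of minimum value, one may assume by the standard merging and splitting reductions that distinct members meet in at most one vertex and that each member induces a connected subgraph. For a member $X\neq V$ possessing an interior vertex, the set $B_X$ of vertices of $X$ lying in some other member separates the interior of $X$ from $V\setminus X$ in $H$; translating this separation back to $G$ and absorbing the at most two deleted edges $e,f$ that might cross it yields a mixed cut $(B_X,F_X)$ of $G$ with $F_X\subseteq\{e,f\}$, whence $2|B_X|+|F_X|\ge 6$. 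The target is then to aggregate these boundary bounds over all members to conclude $\sum_X(2|X|-3)\ge 2|V|-3$, equivalently that the total overlap $\sum_v(m_v-1)$, with $m_v$ the number of members containing $v$, is at least $\tfrac{3}{2}(t-1)$ for a cover with $t$ members.

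The main obstacle is exactly this final counting step: after deleting $e$ and $f$ the separations only guarantee boundaries of size roughly $2$, rather than the size $6$ available in the plain $6$-connected case, so one must show that mixed $6$-connectivity still forces enough overlap between the members. This is where the mixed formulation earns its keep, because an edge crossing a boundary contributes to $|F_X|$ and tightens $2|B_X|+|F_X|\ge 6$ precisely when $|B_X|$ is small. I expect the cleanest route is to run the Lov\'asz--Yemini/Jackson--Jord\'an charging argument directly on $G$, treating $e$ and $f$ as boundary-crossing edges throughout, rather than first passing to $H$ and discarding the edge information; this keeps the full force of mixed $6$-connectivity at every separation. An alternative, at the cost of invoking more structure, is to bypass the rank formula and argue inductively via $\mathcal R_2$-connectivity and the $2$-sum lemmas (\cref{lemma:circuit2sum,lemma:Mconnected2sum,lemma:redundantlyMconnected2sum}) in the spirit of the proof of \cref{theorem:2dimredundantlyMconnected}; here the catch is that mixed $6$-connected graphs may contain $3$-separations, which do not reduce through $2$-sums and make the induction markedly less clean.
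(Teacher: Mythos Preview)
The paper does not prove \cref{thm:JJb}; it is quoted verbatim from \cite{JJmixed} and used as a black box to derive \cref{corollary:betterbound}. So there is no proof in the paper to compare against.

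As for your proposal itself: the overall architecture is sound and is indeed the route taken in \cite{JJmixed}. Reducing to \cref{theorem:2dimgloballyrigid}, dispatching $3$-connectivity of $G-e$ by the obvious mixed-cut argument, and then attacking rigidity of $G-e-f$ via the Lov\'asz--Yemini rank formula is exactly the right plan. Your observation that the two deleted edges should be treated as boundary-crossing contributions to $|F_X|$, so that the full inequality $2|B_X|+|F_X|\ge 6$ remains available at every separation, is the key idea.

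However, what you have written is a plan, not a proof. You correctly identify the crux --- the aggregation step showing $\sum_X(2|X|-3)\ge 2|V|-3$ --- and then stop, saying only that you ``expect the cleanest route'' is a charging argument run directly on $G$. That is precisely the nontrivial content of the theorem, and you have not carried it out. The actual argument in \cite{JJmixed} requires a careful case analysis of how the members of a thin cover interact and how the deleted edges can lie relative to the boundaries $B_X$; the bound $2|B_X|+|F_X|\ge 6$ alone does not immediately sum to what you need without tracking which members the endpoints of $e$ and $f$ belong to. Your alternative via $2$-sums is, as you note yourself, obstructed by $3$-separations and would not work without substantial additional ideas. So the proposal has the right skeleton but the decisive computation is missing.
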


We can prove the following analogue of \cref{thm:mader} for graphs with sufficiently high mixed connectivity. The proof strategy is similar to  Mader's proof of \cref{thm:mader}.

\begin{theorem}
\label{maderext}
Let $G=(V,E)$ be a graph and suppose that for some even positive integer $k$
we have $|V| \geq k+1$ and
\begin{equation*}
|E| > (k-1)(|V|-\frac{k}{2}).
\end{equation*}
Then $G$ contains a mixed $k$-connected subgraph.
\end{theorem}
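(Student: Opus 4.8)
The plan is to mimic Mader's extremal/minimal-counterexample argument for Theorem~\ref{thm:mader}, but adapted to the mixed-connectivity setting where a cut $(S,F)$ carries weight $2|S| + |F|$. Suppose, for a contradiction, that $G$ has no mixed $k$-connected subgraph, yet $|V| \geq k+1$ and $|E| > (k-1)(|V| - \frac{k}{2})$. Among all counterexamples (graphs satisfying the edge bound but containing no mixed $k$-connected subgraph), I would take $G$ to be edge-minimal, or more precisely choose a subgraph $H$ of $G$ that is edge-minimal subject to still violating the desired conclusion in a controlled way. The first step is to reduce to the case where $G$ itself is ``critical'': deleting any edge drops the edge count below the threshold relative to the current vertex set, so every edge is in some sense essential.

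The core of the argument is a local density estimate. Since $G$ is not mixed $k$-connected (as $G$ is itself a subgraph of $G$ on $\geq k+1$ vertices, and by assumption contains no mixed $k$-connected subgraph), there is a mixed cut $(S,F)$ with $2|S| + |F| \leq k-1$ separating $V - S$ into nonempty parts $A$ and $B$ (with $F$ the set of edges running between $A$ and $B$ that are not already cut by removing $S$). I would then split $G$ along this cut into two pieces $G_1 = G[A \cup S]$ and $G_2 = G[B \cup S]$, each augmented appropriately on $S$, and set up an inductive count. Writing $n = |V|$, $n_1 = |A \cup S|$, $n_2 = |B \cup S|$, so $n_1 + n_2 = n + |S|$, the key inequality to establish is that if both $G_1$ and $G_2$ fail the edge bound (i.e.\ neither contains a mixed $k$-connected subgraph, by the induction hypothesis applied to smaller graphs), then the number of edges of $G$ is bounded by
\begin{equation*}
|E| \leq |E(G_1)| + |E(G_2)| + |F| \leq (k-1)\Bigl(n_1 - \tfrac{k}{2}\Bigr) + (k-1)\Bigl(n_2 - \tfrac{k}{2}\Bigr) + |F|.
\end{equation*}
Using $n_1 + n_2 = n + |S|$ and the cut bound $2|S| + |F| \leq k-1$, I would simplify the right-hand side and check that it is at most $(k-1)(n - \frac{k}{2})$, contradicting the hypothesis $|E| > (k-1)(n - \frac{k}{2})$. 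The base case is $|V| = k+1$, where the bound $|E| > (k-1)(\frac{k}{2}+1)$ forces $G$ to be dense enough that one verifies directly it is mixed $k$-connected (or contains such a subgraph), using that $k$ is even so the weights behave cleanly.

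The step I expect to be the main obstacle is the bookkeeping in the inductive splitting: I must ensure each piece $G_i$ either already contains a mixed $k$-connected subgraph (done, contradiction) or has at least $k+1$ vertices so that the induction hypothesis genuinely applies, and I must handle the degenerate case where one side of the cut is too small to recurse on. When $n_i < k+1$, the induction is unavailable, and I would instead argue directly that the small side contributes few edges, using that its vertices have bounded degree into the rest once the cut weight is limited; this is where the evenness of $k$ and the precise form $n - \frac{k}{2}$ of the threshold must be exploited to make the arithmetic close. A secondary subtlety is correctly accounting for edges within $S$ (counted in both $G_1$ and $G_2$, hence the inequality rather than equality above is in our favor) and the interaction between the vertex-deletions and edge-deletions that jointly form the mixed cut. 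Once these boundary cases are dispatched, the weighted counting collapses exactly to the claimed threshold, completing the contradiction.
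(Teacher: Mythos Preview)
Your overall strategy matches the paper's: take a minimal subgraph $H$ still satisfying $|V(H)|\geq k+1$ and $|E(H)|>(k-1)(|V(H)|-\tfrac{k}{2})$, dispose of the base case $|V(H)|=k+1$ (where the bound forces $H=K_{k+1}$), assume a mixed cut $(S,F)$ with $2|S|+|F|\leq k-1$, split into two pieces on $V(C_i)\cup S$, and count edges to reach a contradiction. The final arithmetic you outline is exactly what the paper does, using $|F|\leq k-1$ and (from $k$ even) $|S|\leq \tfrac{k}{2}-1$.

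The gap is in your treatment of the case $n_i<k+1$. You propose to handle a small side ``directly'' by bounding its edge contribution, but this is not spelled out, and more importantly your chosen minimality (\emph{edge}-minimal) is too weak to make it work: edge-minimality gives no control on vertex degrees. The paper instead takes $H$ minimal \emph{as a subgraph}, so in particular vertex deletion is allowed. This immediately forces every vertex of $H$ to have degree at least $k$ (otherwise delete it and $H-v$ still satisfies the density hypothesis, contradicting minimality). With minimum degree $\geq k$ in hand, the paper proves that \emph{both} sides $V(C_i)\cup S$ have at least $k+1$ vertices: if $|V_1|\leq k$, each vertex of $C_1$ sends at least $k-|V_1|+1$ edges across to $C_2$, so $|F|\geq (|V_1|-|S|)(k-|V_1|+1)$, and a short optimization in $|V_1|$ then gives $2|S|+|F|\geq k$, contradicting the cut bound. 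Once both sides are large, there is no small-side case at all, and your edge count runs exactly as written. So the missing ingredient is: subgraph-minimality $\Rightarrow$ minimum degree $\geq k$ $\Rightarrow$ both sides of the cut have $\geq k+1$ vertices.
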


\begin{proof}
Let $H = (V',E')$ be a minimal subgraph of $G$ satisfying $|V'| \geq k+1$ and $|E'|> (k-1)(|V'|-\frac{k}{2})$.
We claim that $H$ is mixed $k$-connected.
First consider the case when $|V'|=k+1$. A graph on $k+1$ vertices has at most $\binom{k+1}{2}=\frac{k^2+k}{2}$ edges.
So if $H$ has at least $(k-1)(\frac{k}{2}+1)+1=\frac{k^2+k}{2}$ edges, then it is isomorphic to $K_{k+1}$,
which is mixed $k$-connected. 

Thus, we may assume that $|V'|\geq k+2$.
We must have $d_{H}(v)\geq k$ for all $v\in V(H)$, for otherwise we could replace $H$ by $H-v$, contradicting the minimality of $H$.
%
For a contradiction, suppose that $H$ has a mixed
cut $(S,F)$ with $2|S|+|F|<k$. 
Let $C_1$ be 
the union of some (but not all) connected components of $H-S-F$ 
and let 
$C_2=H-S-F-V(C_1)$. By symmetry, the following claim implies that $V(C_1)\cup S$ as well as $V(C_2)\cup S$ 
have cardinality at least $k+1$.

\begin{claim}
\label{cl:k}
Let $V_1=V(C_1)\cup S$. Then
$|V_1|\geq k+1$.
\end{claim}

\begin{proof}
For a contradiction, suppose that $|V_1|\leq k$.
Since $H$ is a simple graph with minimum degree at least $k$, each vertex in $C_1$ is adjacent to
at most $|V_1|-1$ vertices in $V_1$ and at least $k-|V_1|+1$ vertices in $C_2$. 
Since $C_1$ and $C_2$ are disconnected in $H - S - F$, this implies
\begin{equation*}
|F|\geq (|V_1|-|S|)(k-|V_1|+1),
\end{equation*}
which gives
\begin{align}
\label{count}
\begin{split}
2|S|+|F| &\geq (|V_1|-|S|)(k-|V_1|+1)+2|S| \\
    &= -|V_1|^2 + (k+|S|+1)|V_1| -k|S| + |S|.
\end{split}
\end{align}

We claim that the right-hand side of \cref{count} can be bounded from below by $k + |S|$. Indeed, consider the function 
\begin{equation*}
f: \RR \rightarrow \RR, \hspace{1em} x \mapsto -x^2 + (k + |S| + 1)x - k|S| + |S|;
\end{equation*}
note that the right-hand side of \cref{count} is just $f(|V_1|)$. It is straightforward to compute that $f(|S| + 1) = f(k) = k + |S|$. Since $f$ is concave and $|S| + 1 \leq |V_1| \leq k$, this implies that $f(|V_1|) \geq k + |S|$, as required. 

Combining this bound with \cref{count} gives $2|S| + |F| \geq k + |S| \geq k$. But this contradicts our assumption that $2|S| + |F| < k$.
\end{proof}

Now $H-F$ can be obtained as the union of two edge-disjoint subgraphs $H_1$ and $H_2$, on
vertex sets $V(C_i)\cup S$, $i=1,2$, respectively.
The minimality of $H$ and \cref{cl:k} imply that $m_i\leq (k-1)(n_i-\frac{k}{2})$ for $i=1,2$, where
$n_i$ and $m_i$ denote the number of vertices and edges of $H_i$, respectively.
Therefore we obtain, by using $|F|\leq k-1$ and $|S|\leq \frac{k}{2}-1$ (the latter of which follows from the assumption that $k$ is even), that
\begin{align*}
|E'|=m_1+m_2+|F| &\leq (k-1)(n_1+n_2-k) + |F| =(k-1)(|V'|+|S|-k) + |F| \\
&\leq (k-1)(|V'|+|S|-k+1) \leq (k-1)(|V'|-\frac{k}{2}),
\end{align*}
a contradiction. This shows that $H$ is indeed mixed $k$-connected, as claimed.
\end{proof}

For odd $k$,
applying \cref{maderext} to $k+1$ gives that if $|E| > k(|V|-\frac{k+1}{2})$, then $G$ has a
mixed $k$-connected subgraph. 

\cref{maderext,thm:JJb} imply the following strengthening of \cref{corollary:globallyrigidsubgraphweak}. 

\begin{corollary}\label{corollary:betterbound}
Let $G=(V,E)$ be a graph with $|V|\geq 7$ and $|E| \geq 5|V|-14$.
Then $G$ has a subgraph on at least seven vertices which is (redundantly) globally rigid in $\R^2$. \qed
\end{corollary}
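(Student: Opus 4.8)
The plan is to combine the two ingredients immediately preceding the statement, namely the Mader-type \cref{maderext} and the mixed-connectivity version of the Jackson--Jordán sufficiency result \cref{thm:JJb}, by specializing \cref{maderext} to the case $k = 6$.

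First I would check the hypotheses of \cref{maderext} with $k = 6$. Since $k$ is even and the corollary assumes $|V| \geq 7 = k+1$, the only nontrivial point is the edge bound. For $k = 6$ the threshold appearing in \cref{maderext} is
\begin{equation*}
(k-1)\Bigl(|V| - \tfrac{k}{2}\Bigr) = 5\bigl(|V| - 3\bigr) = 5|V| - 15,
\end{equation*}
and the assumption $|E| \geq 5|V| - 14$ gives $|E| > 5|V| - 15$. Hence \cref{maderext} applies and yields a mixed $6$-connected subgraph $H$ of $G$.

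Next I would pin down the size of $H$. The key observation is that a mixed $6$-connected graph has minimum degree at least $6$: for any vertex $v$, the pair $(S,F) = (\emptyset, \{\text{edges incident to } v\})$ is a mixed cut isolating $v$, so mixed $6$-connectivity forces $\deg(v) = 2|S| + |F| \geq 6$. A simple graph with minimum degree at least $6$ has at least seven vertices, so $|V(H)| \geq 7$. (This is also implicit in the construction in the proof of \cref{maderext}, where the extracted subgraph is taken to have at least $k+1 = 7$ vertices.)

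Finally, since $H$ is mixed $6$-connected, \cref{thm:JJb} shows that $H$ is redundantly globally rigid in $\R^2$, and by the previous paragraph $H$ has at least seven vertices, which is exactly the claimed conclusion. I do not expect any genuine obstacle: once \cref{maderext,thm:JJb} are available the argument is a direct two-step application, and the only point requiring a moment's care is verifying that the mixed $6$-connected subgraph is nontrivial (has at least seven vertices), which follows from the minimum-degree bound above.
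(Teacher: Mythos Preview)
Your proposal is correct and is exactly the argument the paper intends: the corollary is stated with a bare \texttt{\textbackslash qed} and the sentence ``\cref{maderext,thm:JJb} imply the following strengthening,'' so the paper's ``proof'' is precisely the two-step application you spell out (specialize \cref{maderext} to $k=6$, then invoke \cref{thm:JJb}). Your extra care in verifying that the mixed $6$-connected subgraph has at least seven vertices is a nice touch; it is in fact guaranteed directly by the construction in the proof of \cref{maderext}, which takes the minimal subgraph on at least $k+1=7$ vertices, so the minimum-degree argument is not strictly needed but is a clean independent justification.
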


We do not know whether the constant $5$ in \cref{corollary:betterbound} can be further improved. See \cref{figure:dense} for a family of graphs with approximately $\frac{5}{2}|V|$ edges and no nontrivial globally rigid subgraphs in $\RR^2$.


\begin{figure}
        \centering
        \begin{tikzpicture}[x = 1cm, y = 1cm, scale = 1]
            \tikzset{every node/.style={circle,draw=black, fill=vertexblack,minimum size=6pt,inner sep=0pt}}

            \node (top) at (-1.5,1.5) {};
            \node (bottom) at (1.5,1.5) {};
            
            \node (top1) at (-2.5,0) {};
            \node (bottom1) at (-1.5,0) {};

            \node (bottom2) at (2.5,0) {};
            \node (top2) at (1.5,0) {};

            \node (left) at (-.5,0) {};
            \node (right) at (.5,0) {};
            
            \draw [line width=\normaledge,color=edgeblack] (top) -- (top1) -- (bottom1) -- (bottom) -- (bottom2) -- (top2) -- (top) -- (bottom2);
            \draw [line width=\normaledge,color=edgeblack] (top1) -- (bottom) -- (top2);
            \draw [line width=\normaledge,color=edgeblack] (top) -- (bottom1);
            
            \draw [line width=\normaledge,color=edgeblack] (top) -- (left) -- (right) -- (bottom) -- (left);
            \draw [line width=\normaledge,color=edgeblack] (right) -- (top);
            
        \end{tikzpicture}
        \caption{Gluing $\ell$ copies of $K_4 - e$ ($\ell=3$ in the drawing) gives a graph with no nontrivial globally rigid subgraphs in $\RR^2$, on $2\ell + 2$ vertices and with $5\ell$ edges.}
        \label{figure:dense}
\end{figure}
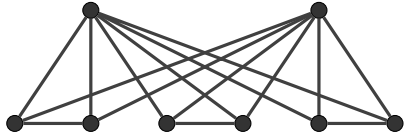

To close this section, we consider whether the above strategy for finding globally rigid subgraphs can be extended to the $d \geq 3$ case. We recall the following conjecture originally posed by Lovász and Yemini \cite{lovasz.yemini_1982}, which has now been open for over forty years.
\begin{conjecture}\cite[Conjecture 61.1.19]{handbook}\label{conjecture:sufficientconnectivity} If a graph is $d(d+1)$-connected, then it is rigid in $\RR^d$.
\end{conjecture}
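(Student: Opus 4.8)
This is the Lovász--Yemini conjecture, which is genuinely open for $d \geq 3$, so what follows is the natural line of attack together with the obstruction that has kept it open, rather than a proof I can complete. The low-dimensional cases are settled and should frame the plan. For $d = 1$, rigidity in $\RR^1$ coincides with connectivity, so $2$-connectivity trivially suffices. The $d = 2$ case asserts that $6$-connectivity implies rigidity in the plane; this is classical and is in fact subsumed by the stronger \cref{theorem:6connected}. So the genuine target is $d \geq 3$.

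By \cref{theorem:gluck}, to prove that a $d(d+1)$-connected graph $G = (V,E)$ is rigid in $\RR^d$ it suffices to show $r_d(G) = d|V| - \binom{d+1}{2}$; equivalently, that the $d$-dimensional rigidity matroid of $G$ attains its maximum possible rank. The plan would be to derive this maximal rank from the connectivity hypothesis. The standard device is the cover (Maxwell) bound: for any cover of $E$ by edge sets $E_1,\ldots,E_t$ spanning vertex sets of sizes $n_1,\ldots,n_t$, subadditivity of the matroid rank together with the count that each piece has rank at most $d n_i - \binom{d+1}{2}$ yields $r_d(G) \leq \sum_i (d n_i - \binom{d+1}{2})$. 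For $d \leq 2$ this bound is tight --- it is exactly the Lovász--Yemini min--max rank formula, the matroidal form of Laman's theorem --- so to certify full rank one only has to rule out a ``cheap'' cover, and the argument that a cheap cover would produce a small vertex (or mixed) cut, contradicting high connectivity, can be carried out in the spirit of the decomposition and tree-packing arguments behind \cref{maderext}.

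The hard part, and the reason the conjecture is open, is that for $d \geq 3$ the cover bound above is only an upper bound on the rank and can be strictly loose: the double-banana graph on eight vertices meets every Maxwell count yet is flexible in $\RR^3$, so ruling out cheap covers does \emph{not} certify full rank. What is missing is a combinatorial \emph{lower} bound on $r_d(G)$ --- a Laman-type characterization of $d$-dimensional rigidity --- and no such characterization is known for $d \geq 3$. Consequently, connectivity combined with counting, which is exactly what succeeds in the plane, cannot by itself force maximal rank in higher dimensions.

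One might instead hope to induct on $d$ through the coning correspondence of \cref{theorem:rigidconing}, since $G$ is rigid in $\RR^d$ if and only if the cone $G^v$ is rigid in $\RR^{d+1}$. This fails to give a usable induction: a general $(d+1)(d+2)$-connected graph is not a cone (its apex would have to be adjacent to every other vertex), and the deconing or cleaving operations needed to expose a lower-dimensional instance do not interact cleanly with vertex connectivity. Absent a genuinely new handle on the higher-dimensional rigidity rank function, a complete proof seems to require ideas beyond the connectivity-and-counting framework that settles the planar case.
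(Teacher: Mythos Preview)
Your assessment is correct: the paper does not prove this statement either. \Cref{conjecture:sufficientconnectivity} is presented in the paper as an open conjecture, attributed to Lov\'asz and Yemini, and is used only as a \emph{hypothesis} in \cref{theorem:grn} to derive a conditional lower bound on $\grn(G)$. There is no proof in the paper to compare your proposal against, and your explanation of why the conjecture remains open for $d \geq 3$ --- the lack of a Laman-type combinatorial characterization of $r_d$ and the failure of the cover bound to be tight, as witnessed by examples like the double banana --- is accurate and well-framed.
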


A positive answer to \cref{conjecture:sufficientconnectivity} would imply that the graph parameter $\grn$ (defined at the start of this section) can be bounded from below by roughly $\sqrt{|E|/|V|}.$ We sketch the proof of this implication below.   

\begin{theorem}\label{theorem:grn}
If \cref{conjecture:sufficientconnectivity} is true, then for every graph $G = (V,E)$ we have \[\grn(G) \geq \left\lfloor\sqrt{\frac{|E|}{6 \cdot |V|}}\right\rfloor.\]
\end{theorem}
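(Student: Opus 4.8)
The plan is to chain together \cref{maderext}, the connectivity-to-rigidity hypothesis (\cref{conjecture:sufficientconnectivity}), and the ``dimension dropping'' result \cref{theorem:dimensiondropping} to manufacture a nontrivial globally rigid subgraph of dimension roughly $\sqrt{|E|/|V|}$. Write $\rho = |E|/|V|$ for the density. The idea is: high density forces a highly mixed-connected (hence highly vertex-connected) subgraph; by the sufficient connectivity conjecture this subgraph is rigid in some $\RR^{d+1}$; and by \cref{theorem:dimensiondropping} a graph rigid in $\RR^{d+1}$ is globally rigid in $\RR^d$. Optimizing the dimension against the available connectivity will give the stated $\lfloor\sqrt{|E|/(6|V|)}\rfloor$ bound.

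Concretely, I would first fix the target dimension $d$ and ask for a subgraph that is $d(d+1)$-connected, since by \cref{conjecture:sufficientconnectivity} such a subgraph is rigid in $\RR^d$. By \cref{maderext} (or its odd-$k$ variant stated right after it), a graph on $|V| \geq k+1$ vertices with more than $(k-1)(|V| - \tfrac{k}{2})$ edges contains a mixed $k$-connected subgraph, which is in particular $\lceil k/2\rceil$-connected. So to guarantee a $d(d+1)$-connected subgraph it suffices to take $k$ with $\lceil k/2 \rceil \geq d(d+1)$, i.e.\ $k \approx 2d(d+1)$, and to check that $|E| > (k-1)(|V| - \tfrac{k}{2})$. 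Since $k \le 2d(d+1) + O(1)$, the required edge count is at most roughly $2d(d+1)\cdot|V|$, i.e.\ bounded by $6d^2|V|$ for the constants to work out cleanly. Thus if $\rho = |E|/|V| \geq 6d^2$, the Mader-type bound is satisfied and we obtain a subgraph $H$ that is $d(d+1)$-connected, hence (assuming \cref{conjecture:sufficientconnectivity}) rigid in $\RR^d$. To feed this into \cref{theorem:dimensiondropping} I would instead apply the connectivity hypothesis in dimension $d+1$: a subgraph that is $(d+1)(d+2)$-connected is rigid in $\RR^{d+1}$, and then by \cref{theorem:dimensiondropping} it is globally rigid in $\RR^d$; one checks the vertex count of $H$ is at least $d+2$ so the subgraph is nontrivial.

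The remaining step is bookkeeping: solve for the largest $d$ such that the density $\rho$ supports finding a $(d+1)(d+2)$-connected subgraph via \cref{maderext}. Requiring $k \approx 2(d+1)(d+2)$ and the edge inequality $|E| > (k-1)(|V|-\tfrac k2)$ forces $\rho \gtrsim 2(d+1)(d+2)$, and bounding $(d+1)(d+2) \le 3d^2$ (valid for $d\ge 1$, say) shows $\rho \geq 6d^2$ suffices; inverting gives $d \leq \sqrt{\rho/6} = \sqrt{|E|/(6|V|)}$, so we may take $d = \lfloor \sqrt{|E|/(6|V|)}\rfloor$, which yields $\grn(G) \geq d$ as claimed.

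The main obstacle I expect is purely in the constants and the passage between \emph{mixed} connectivity, ordinary vertex connectivity, and the $d(d+1)$ threshold: \cref{maderext} produces mixed $k$-connectivity, which only guarantees $\lceil k/2\rceil$-connectivity, so one loses a factor of two that must be absorbed into the $6$ in the denominator, and one must verify the off-by-one effects (the $|V| \ge k+1$ hypothesis, the floor, the nontriviality condition $|V(H)| \ge d+2$) do not degrade the clean bound. Since the theorem is stated conditionally and only asks for an order-of-magnitude lower bound with a specific but non-optimal constant, I would not attempt to optimize these constants; the argument is a sketch (``We sketch the proof of this implication''), so it suffices to exhibit one valid choice of $k$ as a function of $d$ making all inequalities hold.
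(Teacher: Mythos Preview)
Your overall strategy is sound and close to the paper's, but there are two divergences, and one of them creates a genuine gap in the constants.

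First, the paper does not go through \cref{maderext} and mixed connectivity at all; it applies Mader's original theorem (\cref{thm:mader}) directly to obtain a $\lfloor\rho/2\rfloor$-connected subgraph. Your detour through mixed $k$-connectivity, followed by the observation that mixed $k$-connected graphs are $\lceil k/2\rceil$-connected, is correct but unnecessary: it gives essentially the same vertex-connectivity conclusion at the same density threshold, so you gain nothing and make the bookkeeping harder.

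The more significant difference is how you pass from connectivity to global rigidity. You go through \cref{theorem:dimensiondropping}: a $(d+1)(d+2)$-connected subgraph is (under the conjecture) rigid in $\RR^{d+1}$, hence globally rigid in $\RR^d$. The paper instead invokes Tanigawa's theorem (vertex-redundant rigidity implies global rigidity): a $(d(d+1)+1)$-connected graph $H$ has $H-v$ still $d(d+1)$-connected for every $v$, hence rigid in $\RR^d$ by the conjecture, so $H$ is vertex-redundantly rigid and therefore globally rigid in $\RR^d$. This requires only $d(d+1)+1$ rather than $(d+1)(d+2)$.

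This difference is exactly what makes the constant $6$ work in the paper but not in your argument. The paper uses $d(d+1)+1 \leq 3d^2$, which holds for all $d \geq 1$ (with equality at $d=1$). Your claimed inequality $(d+1)(d+2) \leq 3d^2$ is false at $d=1$, where the left side is $6$ and the right side is $3$. Concretely, when $6 \leq \rho < 24$ you need to exhibit a nontrivial globally rigid subgraph in $\RR^1$, but your route asks for a $6$-connected subgraph (to get rigidity in $\RR^2$), which $\rho \geq 6$ does not guarantee. The paper only needs a $3$-connected subgraph here, which Mader's bound does provide. You could patch this by handling $d=1$ separately, but the cleaner fix is to replace the dimension-dropping step with Tanigawa's theorem, which is what the paper does.
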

\begin{proof}
Let $G = (V,E)$ be a graph and let us define $\varrho = \frac{|E|}{|V|}$ for convenience. If $\varrho < 6$, then the right-hand side of the claimed inequality is zero and the inequality is trivially satisfied. Thus we may assume that $\varrho \geq 6$. 
 
Note that  \cref{thm:mader} implies that $G$ contains a $\lfloor\varrho/2\rfloor$-connected subgraph. Now take any $d \geq 1$ satisfying $d^2 \leq \lfloor{\varrho}/{6}\rfloor$. With this choice, we have
\[d(d+1) + 1 \leq 3d^2 \leq 3\left\lfloor\frac{\varrho}{6}\right\rfloor \leq \left\lfloor\frac{\varrho}{2}\right\rfloor,\]
and consequently $G$ contains a $(d(d+1)+1)$-connected subgraph. If \cref{conjecture:sufficientconnectivity} is true, then a theorem of Tanigawa \cite{tanigawa_2015} implies that every $(d(d+1) + 1)$-connected graph is globally rigid in $\RR^d$ (see \cref{section:concluding} for Tanigawa's result). Such a graph has at least $d(d+1) + 2 \geq d+2$ vertices. These observations together imply that
\[\grn(G) \geq \left\lfloor\sqrt{\left\lfloor\frac{\varrho}{6}\right\rfloor}\right\rfloor = \left\lfloor\sqrt{\frac{\varrho}{6}}\right\rfloor,\]as required.
\end{proof}

Consider the bipartite graph $K_{n,n}$. By \cref{theorem:bipartite} this graph is globally rigid in $\RR^d$ if and only if $2n \geq \binom{d+2}{2}$, which implies that $\grn(K_{n,n}) = O(\sqrt{n})$. On the other hand, the ``density'' of $K_{n,n}$ is $\frac{n^2}{2n} = O(n)$. This shows that the lower bound given by \cref{theorem:grn} would be asymptotically tight. 

\section{Concluding remarks}\label{section:concluding}

\subsection{Vertex redundant rigidity and \texorpdfstring{$(d+1)$}{(d+1)}-connectivity}


We say that a graph $G$ is \emph{ vertex-redundantly rigid} in $\R^d$ if
$G-v$ is rigid in $\R^d$ for all $v\in V(G)$. By a result of
Tanigawa \cite{tanigawa_2015} vertex-redundant rigidity in $\R^d$ implies
global rigidity in $\R^d$. 
On the other hand, global rigidity in $\RR^d$ implies
$(d+1)$-connectivity (cf. \cref{theorem:hendrickson}).


It is interesting to note that the tight upper bounds for the
number of edges of a graph $G=(V,E)$ that is minimal with respect to any of these three
properties are the same up to a small constant. A well-known result of Mader \cite{Mader2} states that for all $d\geq 0$ and $|V|\geq 3d+1$,
the number of edges in a minimally $(d+1)$-connected graph on $|V|$ vertices
satisfies \[|E|\leq (d+1)|V|-(d+1)^2.\] 
This bound is tight, as shown by the
graph $K_{n-d-1,d+1}$.
In the case of global rigidity we have \cref{theorem:minimallygloballyrigid},
which gives the upper bound
\begin{equation*}
|E| \leq (d+1)|V| - \binom{d+2}{2}.
\end{equation*}
As discussed in \cref{section:minimallygloballyrigid}, this bound is not tight for $|V| \geq d+3$, but again, $K_{d+1,n-d-1}$ with $n \geq \binom{d+2}{2} + 1$ gives an almost tight example.  
Kaszanitzky and Kir\'aly showed that the same upper bound holds for the number of edges in a minimally vertex-redundantly rigid graph \cite{KK}. In this case, a tight example can be obtained from $K_{d+1,n-d-1}$ by adding all possible edges between the vertices of the color class of size $d+1$.

Note that $K_{d+1,n-d-1}$ with $n \geq \binom{d+2}{2}+1$ is not only minimally globally rigid in $\RR^d$, but also minimally redundantly rigid in $\RR^d$. We believe that the number of edges in a minimally redundantly rigid graph in $\RR^d$ on vertex set $V$ is also at most $(d+1)|V|$, minus a small constant depending on $d$.

\subsection{Globally \texorpdfstring{$[k,d]$}{[k,d]}-rigid graphs}

\cref{bridgeconjspec} implies that minimally globally rigid graphs in $\RR^2$ are $\mathcal{R}_3$-independent. This can be generalized in the following way. Let us say that a graph $G$ is \emph{globally $[k,d]$-rigid} if it is globally rigid in $\RR^d$ and remains so after the deletion of any set of less than $k$ vertices. The graph is \emph{minimally globally $[k,d]$-rigid} if it is globally $[k,d]$-rigid but $G-e$ is not globally $[k,d]$-rigid, for every edge $e$ of $G$. \cref{bridgeconjspec} can be used to show that for any $k \geq 1$ and $d \in \{1,2\}$, if $G = (V,E)$ is minimally globally $[k,d]$-rigid, then $G$ is $\mathcal{R}_{d+k}$-independent. This implies that
\[|E| \leq (d+k)|V| - \binom{d+k+1}{2}.\]
The argument follows the proof of \cite[Lemma 6]{KK}. The upper bound is almost tight.
We omit the details.

\section{Acknowledgements}

We thank the anonymous reviewers for carefully reading the manuscript.
\cref{conjecture:minimallygloballyrigid} was first posed at the 2015 ``Advances in Combinatorial and Geometric Rigidity'' workshop hosted by the Banff International Research Station. 

This work was supported by the ÚNKP-21-3 New National Excellence Program of the Ministry for Innovation and Technology, the Hungarian Scientific Research Fund grant no. K135421
and the project Application Domain Specific Highly Reliable IT Solutions which has been
implemented with the support provided from the National Research, Development and
Innovation Fund of Hungary, financed under the Thematic Excellence Programme 
TKP2020-NKA-06 (National Challenges Subprogramme) funding scheme.

\appendix

\crefalias{subsection}{appsec}
\section*{Appendix}
\renewcommand{\thesubsection}{\Alph{subsection}}

\subsection{\texorpdfstring{$\Rd$}{Rd}-connectivity and \texorpdfstring{$2$}{2}-sums} \label{appendix:A}
In the following, we prove \cref{lemma:Mconnected2sum,lemma:redundantlyMconnected2sum}. We shall need the following folklore statement which is a consequence of the well-known ``Gluing Lemma'' of Whiteley \cite[Lemma 11.1.9]{whiteley_1996}.

\begin{lemma}\label{lemma:circuitseparatingedge}
If $G = (V,E)$ is an $\Rd$-circuit and $\{u,v\}$ is a separating vertex pair of $G$, then $uv \notin E$.
\end{lemma}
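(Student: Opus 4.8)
The plan is to argue by contradiction. Suppose $G$ is an $\Rd$-circuit, that $\{u,v\}$ is a separating vertex pair, and, contrary to the claim, that $uv \in E$. Write $V = V_1 \cup V_2$ with $V_1 \cap V_2 = \{u,v\}$, where $A := V_1 \setminus \{u,v\}$ and $B := V_2 \setminus \{u,v\}$ are both nonempty and there are no edges between $A$ and $B$. Put $H_i = G[V_i]$ for $i = 1,2$; then $H_1 \cup H_2 = G$, both $H_1$ and $H_2$ contain the edge $uv$, and $E(H_1) \cap E(H_2) = \{uv\}$. Since $G$ is $2$-connected by \cref{lemma:Mcircuits}, every vertex of $A$ and of $B$ has degree at least $2$, and all of its incident edges stay inside its own part; hence each of $H_1, H_2$ misses some edge of $G$ and is therefore a \emph{proper} subgraph, so by the definition of an $\Rd$-circuit both are $\Rd$-independent. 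Thus it suffices to prove that the union of two $\Rd$-independent graphs that share exactly one common edge is again $\Rd$-independent: this would force $G$ to be $\Rd$-independent, contradicting that it is a circuit.

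The heart of the matter is therefore the following gluing statement, which is a consequence of Whiteley's Gluing Lemma \cite{whiteley_1996}: if $H_1, H_2$ are $\Rd$-independent with $V(H_1) \cap V(H_2) = \{u,v\}$ and $uv \in E(H_1) \cap E(H_2)$, then $H_1 \cup H_2$ is $\Rd$-independent. I would prove this directly by an equilibrium-stress argument. Fix a generic $p$ and let $\omega$ be a stress of $(G,p)$ with $G = H_1 \cup H_2$; the goal is to show $\omega = 0$. Restrict $\omega$ to the edges of $H_1$ that lie inside $A \cup \{u,v\}$ other than $uv$. This weighting is in equilibrium at every vertex of $A$, and summing the force identity and the rotational (torque) identity over $V_1$ shows that the residual forces at $u$ and $v$ are opposite and, crucially, parallel to $p(u) - p(v)$. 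Consequently, after correcting the coefficient on $uv$ by a suitable scalar, this weighting becomes an honest stress of $H_1$; since $H_1$ is independent it vanishes. The symmetric argument on the $B$-side kills the remaining coefficients, and equilibrium at $u$ then forces $\omega_{uv} = 0$ as well, so $\omega = 0$ and $G$ is stress-free.

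The hard part will be the step showing that the residual force at $u$ is parallel to $p(u) - p(v)$; this is where the geometry enters. It follows from the rotational identity together with the fact that, for $d \geq 2$, the vectors $\{R(p(u)-p(v)) : R \text{ antisymmetric}\}$ span the hyperplane orthogonal to $p(u)-p(v)$ (the case $d=1$ being trivial, since every residual is automatically a scalar multiple of $p(u)-p(v)$). It is worth emphasizing what could otherwise go wrong: for $d \geq 3$ the \emph{rigidity} form of the Gluing Lemma does not apply, because gluing along only two vertices leaves a hinge and need not preserve rigidity; it is precisely stress-freeness, not rigidity, that survives, which is why the torque identity rather than a dimension count is the right tool. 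The only bookkeeping point is to confirm that $A$ and $B$ each induce at least one edge beyond $uv$ so that $H_1, H_2$ are genuinely proper subgraphs, and this is guaranteed by the $2$-connectivity of $G$ from \cref{lemma:Mcircuits}.
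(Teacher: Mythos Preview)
Your proof is correct and follows the same overall strategy as the paper: split $G$ along the separating pair into two proper (hence $\Rd$-independent) induced subgraphs sharing the edge $uv$, and then use the Gluing Lemma for independence to conclude that $G$ itself is $\Rd$-independent, a contradiction. The paper simply cites Whiteley's Gluing Lemma for this last step, whereas you supply a self-contained equilibrium-stress argument (force balance gives $F_u+F_v=0$, and the torque identity forces $F_u$ parallel to $p(u)-p(v)$, so a corrected coefficient on $uv$ yields a stress of $H_1$); this is the standard proof of the relevant case of the Gluing Lemma and is a welcome explicit addition.
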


\begin{proof}[Proof of \cref{lemma:Mconnected2sum}]
\textit{(a)} $\Rightarrow$ \textit{(b)}: Let $e_i \in E_i$ be an edge in $G_i$ different from $u_iv_i$, for $i=1,2$. We shall show that there is an $\Rd$-circuit in $G_1 \twosum G_2$ containing $e_1$ and $e_2$. Indeed, from the assumption that $G_i$ is $\Rd$-connected, it contains an $\Rd$-circuit $C_i$ such that $e_i,u_iv_i \in C_i$. By \cref{lemma:circuit2sum}, $C_1 \twosum C_2$ is an $\Rd$-circuit of $G$ which contains $e_1$ and $e_2,$ as desired. This shows that any pair of edges $e_i \in E_i, i =1,2$ is in the same connected component of $\mathcal{R}_d(G_1 \twosum G_2)$, which implies that there is only one connected component, so $G_1 \twosum G_2$ must be $\Rd$-connected.

\textit{(b)} $\Rightarrow$ \textit{(c)}: It suffices to show that $uv$ is contained in some $\Rd$-circuit of $G_1 \twosum G_2 + uv$. As before, let $e_i \in E_i$ be an edge in $G_i$ different from $u_iv_i$, for $i=1,2$; then by $\Rd$-connectivity there is an $\Rd$-circuit $C$ in $G_1 \twosum G_2$ containing $e_1$ and $e_2$. By \cref{lemma:Mcircuits}, $C$ is $2$-connected, so we must have $u,v\in V(C)$. Let $C_1,C_2$ be the graphs obtained from a $2$-separation of $C$ along $\{u,v\}$. Then $C_1$ is a subgraph of $G_1 \twosum G_2 + uv$, and it is an $\Rd$-circuit by \cref{lemma:circuit2sum}, so $uv$ is indeed contained in an $\Rd$-circuit.

\textit{(c)} $\Rightarrow$ \textit{(a)}: By transitivity and symmetry, it suffices to show that for every edge $e_1$ of $G_1$ other than $u_1v_1$, there is an $\Rd$-circuit in $G_1$ containing $e_1$ and $u_1v_1$. Since $G_1 \twosum G_2 + uv$ is $\Rd$-connected, it contains such an $\Rd$-circuit $C$. By \cref{lemma:circuitseparatingedge}, $\{u,v\}$ cannot be a separating pair in $C$, so $C$ actually lies in $G_1$, as required. 
\end{proof}

The proof of \cref{lemma:redundantlyMconnected2sum} now follows from repeated applications of \cref{lemma:Mconnected2sum}.

\begin{proof}[Proof of \cref{lemma:redundantlyMconnected2sum}]
\textit{(a)}: By symmetry, it suffices to show that if $e_1$ is an edge of $G_1$ other than $u_1v_1$, then $G_1 \twosum G_2 - e_1$  is $\Rd$-connected. This follows from \cref{lemma:Mconnected2sum}, after noting that $G_1 \twosum G_2 - e_1 = (G_1 - e_1) \twosum G_2$, since by assumption both $G_1 - e_1$ and $G_2$ are $\Rd$-connected.

\textit{(b)}: This follows from the observation that if $e_1$ is an edge of $G_1$ other than $u_1v_1$, then by \cref{lemma:Mconnected2sum}, $(G_1 - e_1) \twosum G_2$ is $\Rd$-connected if and only if $(G_1 - e_1) \twosum G_2 + uv$ is $\Rd$-connected.

\end{proof}

\end{document}